\newtheorem{theorem}{Theorem}[section]
\newtheorem{corollary}[theorem]{Corollary}
\newtheorem{lemma}[theorem]{Lemma}
\newtheorem{proposition}[theorem]{Proposition}
\newtheorem{conjecture}[theorem]{Conjecture}
\newtheorem{problem}[theorem]{Problem}
\theoremstyle{definition}
\newtheorem{definition}[theorem]{Definition}
\theoremstyle{remark}
\newenvironment{proof2}{\proof[\textnormal{\textbf{Proof.}}]}{\qed}
\def\e{\varepsilon}
\def\N{\mathbb{N}}
\def\a{\alpha}
\def\d{\delta}
\def\G{\mathcal{G}}
\def\L{\mathcal{L}}
\def\s{\subset}
\def\ex{\mathrm{ex}}
\begin{document}

\title{More on the extremal number of subdivisions}

\author{David Conlon\thanks{Department of Mathematics, California Institute of Technology, USA.
E-mail: {\tt
dconlon@caltech.edu}. Research supported by ERC Starting Grant RanDM 676632.} \and 
Oliver Janzer\thanks{Department of Pure Mathematics and Mathematical Statistics, University of Cambridge, United Kingdom.
E-mail: {\tt oj224@cam.ac.uk}.} \and
Joonkyung Lee\thanks{Fachbereich Mathematik, Universit\"at Hamburg, Germany.
E-mail: {\tt
joonkyung.lee@uni-hamburg.de}. Research supported by ERC Consolidator Grant PEPCo 724903.}}

\date{}

\maketitle

\begin{abstract}
Given a graph $H$, the extremal number $\ex(n,H)$ is the largest number of edges in an $H$-free graph on $n$ vertices. We make progress on a number of conjectures about the extremal number of bipartite graphs. First, writing $K'_{s,t}$ for the subdivision of the bipartite graph $K_{s,t}$, we show that $\ex(n, K'_{s,t}) = O(n^{3/2 - \frac{1}{2s}})$. This proves a conjecture of Kang, Kim and Liu and is tight up to the implied constant for $t$ sufficiently large in terms of $s$. Second, for any integers $s, k \geq 1$, we show that $\ex(n, L) = \Theta(n^{1 + \frac{s}{sk+1}})$ for a particular graph $L$ depending on $s$ and $k$, answering another question of Kang, Kim and Liu. This result touches upon an old conjecture of Erd\H{o}s and Simonovits, which asserts that every rational number $r \in (1,2)$ is realisable in the sense that 
$\ex(n,H) = \Theta(n^r)$ for some appropriate graph $H$, giving infinitely many new realisable exponents and implying that $1 + 1/k$ is a limit point of realisable exponents for all $k \geq 1$. Writing $H^k$ for the $k$-subdivision of a graph $H$, this result also implies that for any bipartite graph $H$ and any $k$, there exists $\delta > 0$ such that $\ex(n,H^{k-1}) = O(n^{1 + 1/k - \delta})$, partially resolving a question of Conlon and Lee. Third, 
extending a recent result of Conlon and Lee, we show that any bipartite graph $H$ with maximum degree $r$ on one side which does not contain $C_4$ as a subgraph satisfies $\ex(n, H) = o(n^{2 - 1/r})$. 
\end{abstract}

\section{Introduction} \label{sectionintroturan}

For a graph $H$, the {\it extremal number} $\ex(n,H)$ is the maximal number of edges in an $H$-free graph on $n$ vertices. The celebrated Erd\H os--Stone--Simonovits theorem \cite{ES46,ESi66} states that $\ex(n,H)=\left(1-\frac{1}{\chi(H)-1}+o(1)\right)\frac{n^2}{2}$, where $\chi(H)$ is the chromatic number of $H$. This determines the asymptotics of $\ex(n,H)$ for any $H$ of chromatic number at least 3. However, for bipartite graphs $H$, it only gives $\ex(n,H)=o(n^2)$. One of the central problems in extremal combinatorics is to obtain more precise bounds in this case. For an overview of this interesting area, we refer the reader to the comprehensive survey by F\"uredi and Simonovits~\cite{FS13}.

Our starting point here lies with one of the few general results in the area, first proved by F\"uredi~\cite{Fu91} and later reproved by Alon, Krivelevich and Sudakov~\cite{AKS03} using the celebrated dependent random choice technique~\cite{FS11}. Note that here and throughout, we use the asymptotic notation $O,o,\Omega,\omega$ to indicate that $n\rightarrow \infty$ and everything else is kept constant. In particular, the implied constants for $O$ and $\Omega$ can depend on any parameter other than $n$.

\begin{theorem}[F\"uredi, Alon--Krivelevich--Sudakov] \label{faks}
	Let $H$ be a bipartite graph such that in one of the parts all the degrees are at most $r$. Then $\ex(n,H) = O(n^{2-1/r})$.
\end{theorem}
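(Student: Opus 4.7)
The plan is to prove this via the dependent random choice technique, which is the approach of Alon--Krivelevich--Sudakov. Let $H$ be bipartite with parts $U$ and $V$, where $|U|=s$, $|V|=t$, and every vertex of $U$ has degree at most $r$ in $H$. Suppose $G$ is a graph on $n$ vertices with $e(G)\geq C n^{2-1/r}$ for a sufficiently large constant $C=C(H)$; I aim to find a copy of $H$ in $G$.

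The first step is to pick $r$ vertices $v_1,\dots,v_r$ of $G$ independently and uniformly at random (with repetition), and let $A=N(v_1)\cap\cdots\cap N(v_r)$. A short convexity calculation gives
\[
\E|A|=\sum_{v\in V(G)}\left(\frac{d(v)}{n}\right)^r\geq n\left(\frac{2e(G)}{n^2}\right)^r\geq (2C)^r,
\]
which is a large constant once $C$ is chosen big enough. Call an $r$-subset $S\subseteq V(G)$ \emph{bad} if its common neighborhood $N(S)$ has fewer than $s+t$ vertices. The probability that a fixed bad $S$ lies in $A$ is $(|N(S)|/n)^r<((s+t)/n)^r$, so the expected number of bad $r$-subsets contained in $A$ is at most $\binom{n}{r}((s+t)/n)^r\leq (s+t)^r$. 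By linearity of expectation, there is a choice of $v_1,\dots,v_r$ for which $|A|$ minus the number of bad $r$-subsets of $A$ is at least $(2C)^r-(s+t)^r\geq t$; deleting one vertex from each bad $r$-subset yields $A'\subseteq A$ with $|A'|\geq t$ in which every $r$-subset has common neighborhood of size at least $s+t$.

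The final step is to embed $H$ into $G$ using $A'$. First, embed $V$ into $A'$ arbitrarily. Then embed the vertices of $U$ one at a time: for each $u\in U$, its neighborhood $N_H(u)$ has size at most $r$, and its image in $A'$ is therefore contained in some $r$-subset $S'\subseteq A'$, whose common neighborhood in $G$ has at least $s+t$ vertices. Since at most $s-1+t<s+t$ vertices of $G$ have already been used to embed previous vertices of $U$ and all of $V$, there is an unused common neighbor available for $u$, and the embedding extends.

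The main conceptual step is the dependent random choice itself: picking the random set $A$ by intersecting neighborhoods of random vertices has the crucial self-correcting feature that subsets of $A$ with small common neighborhood are unlikely to lie in $A$. The parameter choice $m=r$ (the number of random vertices) is forced by the degree bound and is exactly what lets the density threshold $C n^{2-1/r}$ suffice; everything else is routine counting.
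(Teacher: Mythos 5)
Your proof is correct and is precisely the dependent random choice argument of Alon--Krivelevich--Sudakov that the paper cites for Theorem~\ref{faks}; the paper itself states this result as known background and gives no proof. The only cosmetic point is that you should require $|A'|\geq\max(r,t)$ rather than $|A'|\geq t$ (so that $A'$ actually contains $r$-subsets when $t<r$), which the same bound $(2C)^r-(s+t)^r$ supplies once $C$ is large enough.
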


This result is known to be tight~\cite{KRS96, ARSz99}, since, for $s$ sufficiently large in terms of $r$, $\ex(n,K_{r,s})=\Omega(n^{2-1/r})$. Moreover, it is conjectured~\cite{KST54} that this should already hold when $s = r$. On the other hand, a recent conjecture of Conlon and Lee~\cite{CL18} says that containing $K_{r,r}$ as a subgraph should be the only reason why Theorem~\ref{faks} is tight up to the constant.

\begin{conjecture}[Conlon--Lee] \label{conjecturefaks}
	Let $H$ be a bipartite graph such that in one of the parts all the degrees are at most $r$ and $H$ does not contain $K_{r,r}$ as a subgraph. Then there exists some $\d>0$ such that $\ex(n,H)=O(n^{2-1/r-\d})$.
\end{conjecture}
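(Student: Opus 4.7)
The plan is to assume, toward a contradiction, that $G$ is an $n$-vertex $H$-free graph with $e(G) \geq n^{2-1/r-\d}$ edges for some small $\d>0$ to be chosen, and then derive a contradiction. Write $A \sqcup B$ for the bipartition of $H$ with every vertex in $A$ having at most $r$ neighbours in $B$; the hypothesis $K_{r,r} \not\subseteq H$ says that there do not exist $r$ vertices in $A$ whose neighbourhoods in $B$ all share a common $r$-set.

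My first step would be to run dependent random choice, sampling an $r$-tuple $(v_1,\dots,v_r)$ uniformly from $V(G)^r$ and setting $U=N(v_1)\cap\cdots\cap N(v_r)$. With $e(G)\geq n^{2-1/r-\d}$, a standard convexity computation shows that $\E|U| \geq n^{1-r\d}$ and, after deleting a few bad vertices, every $r$-subset of the resulting set has common neighbourhood of size at least $n^{1-O(\d)}$. This is enough to embed $H$ in the style of F\"uredi and Alon--Krivelevich--Sudakov and recover Theorem~\ref{faks}, but not the stronger bound in the conjecture.

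To gain the additional $n^{-\d}$ saving, one must exploit the $K_{r,r}$-freeness of $H$ itself. A natural strategy is the following dichotomy. Let $\L$ be the auxiliary $r$-uniform hypergraph on $U$ whose edges are the $r$-subsets whose common neighbourhood has some \emph{unusually} large size. If $\L$ has many edges sharing a common large co-neighbourhood (i.e.\ many $r$-sets in $U$ whose common neighbourhoods heavily overlap), then $G$ itself contains a supersaturated family of $K_{r,r}$'s which can be spliced into a copy of $H$; the $K_{r,r}$-freeness of $H$ provides exactly the combinatorial freedom needed to avoid unwanted identifications during the splicing. Otherwise $\L$ is sparse, and a K\H ov\'ari--S\'os--Tur\'an-type estimate applied locally around $U$ yields a density-increment which, when iterated and combined with the standard dependent random choice count, contradicts the assumed edge count of $G$.

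The main obstacle, and the reason the conjecture remains open, is making this dichotomy quantitative with an explicit polynomial saving of $n^{\d}$. Current techniques, including dependent random choice, the entropy-compression style arguments used in this paper for the $C_4$-free case, and supersaturation for bipartite Tur\'an problems, each deliver only $o(n^{2-1/r})$ under auxiliary hypotheses on $H$; welding them into a single argument that works uniformly for all $K_{r,r}$-free $H$ appears to require a new structural input, most likely a stability theorem for near-extremal $K_{r,r}$-free graphs. Since such a stability result is itself wide open for $r \geq 3$, I would expect that a full proof of Conjecture~\ref{conjecturefaks} must proceed hand-in-hand with progress on the structure of extremal $K_{r,r}$-free host graphs.
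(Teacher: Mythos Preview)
The statement you are addressing is a \emph{conjecture}, not a theorem: the paper does not prove it and explicitly leaves it open. There is therefore no ``paper's own proof'' to compare your proposal against. The paper's contribution in this direction is the much weaker Theorem~\ref{maxdegr}, which assumes the stronger hypothesis that $H$ is $C_4$-free (not merely $K_{r,r}$-free) and obtains only $o(n^{2-1/r})$ rather than a polynomial saving.

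Your proposal is not a proof, and you yourself recognise this: the final paragraph concedes that the dichotomy you sketch cannot currently be made quantitative and that ``a full proof \dots\ must proceed hand-in-hand with progress on the structure of extremal $K_{r,r}$-free host graphs.'' This is an honest assessment of the state of play, but it means there is nothing here to evaluate as a proof. The earlier paragraphs are heuristic: the dependent random choice step recovers only Theorem~\ref{faks}, as you note, and the ``dichotomy'' in the third paragraph is not made precise (what counts as ``unusually large'', why the splicing step avoids degeneracies using $K_{r,r}$-freeness alone, and why the sparse case yields an actual density increment are all left as assertions). So the proposal should be read as a plausible research programme rather than a proof attempt; as such it is reasonable, but it does not resolve the conjecture, nor does the paper.
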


To say more, recall that the {\it $k$-subdivision} of a graph $L$ is the graph obtained by replacing the edges of $L$ by internally disjoint paths of length $k+1$. We shall write $L^k$ for the $k$-subdivision of $L$ and $L'$ for the 1-subdivision. It is easy to see that any $C_4$-free bipartite graph in which every vertex in one part has degree at most two is a subgraph of $K'_t$ for some positive integer $t$. Conlon and Lee~\cite{CL18} verified their conjecture in the $r=2$ case by proving the following result.

\begin{theorem}[Conlon--Lee] \label{clsub}
	For any integer $t\geq 3$, $\ex(n,K'_t)=O(n^{3/2-1/6^t})$.
\end{theorem}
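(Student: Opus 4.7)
The plan is to argue by induction on $t$. The base case $t=3$ is immediate because $K'_3$ is just the $6$-cycle, so the classical Bondy--Simonovits bound $\ex(n,C_6)=O(n^{4/3})$ already yields $\ex(n,K'_3)=O(n^{3/2-1/6})$, comfortably stronger than the claimed $O(n^{3/2-1/6^3})$.

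For the inductive step, I would assume the bound is known for $t-1$ and let $G$ be a $K'_t$-free graph on $n$ vertices with at least $Cn^{3/2-1/6^t}$ edges, aiming for a contradiction. My first step would be a standard dyadic regularization: by passing to a bipartite subgraph and then pigeonholing degrees, I can assume $G=(A\cup B,E)$ is bipartite with $|A|,|B|=\Theta(n)$, every vertex of $A$ having degree $\Theta(d)$ in $B$ for some $d=\Theta(n^{1/2-1/6^t})$, the only loss being a polylogarithmic factor absorbed into $C$.

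The core idea is then to find a copy of $K'_{t-1}$ that admits a ``fresh'' extension to a $K'_t$. Concretely, I would fix a candidate anchor vertex $w_t\in A$, let $U_{w_t}\subseteq A\setminus\{w_t\}$ be the set of vertices having codegree at least $k$ with $w_t$ in $B$ (for a suitably chosen threshold $k$), and consider the induced bipartite graph $G[U_{w_t},B]$. Averaging over $w_t$, a typical choice makes $U_{w_t}$ of polynomial size and leaves the restricted graph with many edges. Applying the inductive hypothesis to this subgraph produces a $K'_{t-1}$ with branch vertices $w_1,\ldots,w_{t-1}\in U_{w_t}$; since each $w_i$ shares at least $k$ common neighbors in $B$ with $w_t$, a Hall-type matching argument (with $k$ chosen to exceed the total number of already-used subdivision vertices) then supplies $t-1$ vertex-disjoint length-$2$ paths from $w_t$ to the $w_i$ through fresh vertices, completing a $K'_t$ in $G$.

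The main obstacle, and the source of the $1/6^t$ exponent, is the quantitative bookkeeping. One has to ensure simultaneously that $U_{w_t}$ is of polynomial size, that the edge count inside $G[U_{w_t},B]$ exceeds the inductive threshold $|U_{w_t}|^{3/2-1/6^{t-1}}$ so that the hypothesis applies, and that enough fresh common neighbors survive for the matching step. Each constraint couples the degree $d$, the codegree threshold $k$, and the density retained upon restriction to $U_{w_t}$, and tracking these quantitatively through the induction is exactly what pins down the shrinking of the exponent to a factor of $6$ per step.
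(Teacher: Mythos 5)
The paper does not actually prove Theorem~\ref{clsub}: it is quoted from Conlon and Lee~\cite{CL18}, so there is no internal proof to compare against. Judged on its own terms, your sketch has a reasonable outer shape (induction on $t$, base case $K_3'=C_6$ via Bondy--Simonovits, restriction to a neighbourhood of an anchor vertex, application of the inductive hypothesis, extension by internally disjoint paths of length $2$), but the inductive step contains a gap that is not mere bookkeeping.

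The central unproved claim is that some anchor $w_t$ admits a set $U_{w_t}$ of vertices with codegree at least $k$ with $w_t$ that is simultaneously (a) large enough that $G[U_{w_t}\cup B]$ has more than $(|U_{w_t}|+n)^{3/2-1/6^{t-1}}$ edges, which forces $|U_{w_t}|\gtrsim n^{1-5/6^{t}}$, and (b) defined with $k\ge\binom{t-1}{2}+t$ so that the final matching step can choose fresh, pairwise distinct witnesses. These two demands can fail simultaneously, and averaging over $w_t$ cannot rescue them: if $G$ is $C_4$-free (all codegrees are $0$ or $1$, which is compatible with having $n^{3/2-1/6^t}$ edges), then $U_{w_t}=\emptyset$ for every $w_t$ once $k\ge 2$; at the other extreme, if the positive codegrees are concentrated on few pairs, $U_{w_t}$ meets the codegree threshold but is far too small in cardinality for the inductive density requirement. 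Any correct proof needs a dichotomy or a different counting mechanism at exactly this point; it is the crux of the theorem. Two further problems: the inductive hypothesis applied to the bipartite graph $G[U_{w_t}\cup B]$ produces a copy of $K'_{t-1}$ whose branch vertices may all land in $B$ rather than in $U_{w_t}$ (the branch class of the connected bipartite graph $K'_{t-1}$ maps entirely into one side of the host), in which case no length-$2$ extension from $w_t\in A$ exists, so you would need a strengthened inductive statement controlling this orientation; and the derivation of the exponent $1/6^t$ is explicitly deferred rather than carried out. For contrast, the machinery this paper does develop for closely related statements (Sections~\ref{sectionmaxdegr} and~\ref{sectionsubbipnew}) deliberately avoids anchoring at a single vertex: it counts \emph{light} edges of the weighted neighbourhood graph globally, exploits local density or a star double-count, and only afterwards patches witnesses, precisely because the high-codegree neighbourhood of one vertex need not be large.
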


Our first result gives some small progress towards Conjecture~\ref{conjecturefaks} when $r > 2$.

\begin{theorem} \label{maxdegr}
	Let $H$ be a bipartite graph such that in one of the parts all the degrees are at most $r$ and $H$ does not contain $C_4$ as a subgraph. Then $\ex(n,H)=o(n^{2-1/r})$.
\end{theorem}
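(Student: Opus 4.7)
The approach will be to adapt the dependent random choice technique underlying Theorem~\ref{faks} and to exploit the $C_4$-freeness of $H$ to sharpen the resulting $O$ to an $o$. Let $H$ have bipartition $A\cup B$ with $\Delta_H(a)\le r$ for every $a\in A$. The condition $C_4\not\subseteq H$ translates to the statement that the family $\mathcal F:=\{N_H(b):b\in B\}\subseteq\binom{A}{r}$ is a \emph{linear} $r$-uniform hypergraph: any two distinct members share at most one vertex. This linearity is the extra structural input beyond what Theorem~\ref{faks} uses.

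Suppose for contradiction that for some $\e>0$ there are arbitrarily large $n$ and an $H$-free graph $G$ on $n$ vertices with $e(G)\ge \e\, n^{2-1/r}$. Passing to a subgraph in the standard way, we may assume every vertex has degree $\Omega(\e\, n^{1-1/r})$. Now apply dependent random choice: sample a $t$-tuple $(v_1,\dots,v_t)$ of vertices uniformly and put $U=\bigcap_{i}N(v_i)$. First-moment calculations yield, with positive probability, a set $U$ of size at least $\Omega(\e^t n^{1-t/r})$, together with a bound $|\mathcal B_m|=O(m^t n^{r-t})$ on the number of \emph{bad} $r$-subsets $S\subseteq U$, i.e.\ those with $|N_G(S)|<m$, where we set $m:=|V(H)|$.

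The embedding step is where we crucially use the linearity of $\mathcal F$. In the standard proof of Theorem~\ref{faks} one asks that \emph{every} $r$-subset of $U$ be good, which forces $\e$ to exceed an absolute constant. Here, once we fix an injection $\phi:A\hookrightarrow U$, only the $|B|$ specific $r$-subsets $\phi(N_H(b))$, $b\in B$, need to be good. Regarding the good $r$-subsets of $U$ as an $r$-uniform hypergraph $\mathcal G$ on $U$, it therefore suffices to show that $\mathcal G$ contains an isomorphic copy of the linear hypergraph $\mathcal F$. As $\mathcal F$ is linear, we intend to invoke a Tur\'an-type bound for linear $r$-uniform hypergraphs: any sufficiently dense $r$-uniform hypergraph on $|U|$ vertices contains $\mathcal F$ as a subhypergraph. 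Having found such a copy of $\mathcal F$, we read off the embedding $\phi:A\hookrightarrow U$ and then extend greedily to $B$, using that each $\phi(N_H(b))$ has at least $m=|V(H)|$ common neighbours in $G$.

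The main difficulty is making the preceding argument go through with $\e$ arbitrarily small rather than bounded below by an absolute constant. A single application of dependent random choice produces a $\mathcal G$ whose density in $\binom{U}{r}$ degrades as $\e\to 0$, and the hypergraph Tur\'an bound for our fixed linear $\mathcal F$ may fail to apply directly. The remedy will be to iterate the dependent random choice step, or equivalently to combine it with a supersaturation argument, passing at each stage to a denser substructure until the Tur\'an bound for linear hypergraphs becomes applicable. It is this asymptotic iteration that yields the soft $o(n^{2-1/r})$ conclusion, as opposed to the quantitative $O(n^{2-1/r-\delta})$ improvement predicted by Conjecture~\ref{conjecturefaks}.
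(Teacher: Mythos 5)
Your identification of the key structural point --- that $C_4$-freeness of $H$ makes the link hypergraph $\mathcal F=\{N_H(b):b\in B\}$ \emph{linear} --- matches the paper exactly. But the embedding step rests on a claim that is false: there is no ``Tur\'an-type bound for linear $r$-uniform hypergraphs'' asserting that every sufficiently dense $r$-uniform hypergraph contains a fixed linear $\mathcal F$. Linear hypergraphs can have positive Tur\'an density. Already for $r=2$, taking $H=C_6=K_3'$ gives $\mathcal F=K_3$, with Tur\'an density $1/2$; for $r\ge 3$, taking $H$ to be the incidence bipartite graph of $K_{r+1}^{(r)}$ gives $\mathcal F=K_{r+1}^{(r)}$, whose Tur\'an density is bounded away from $0$ (and from $1$). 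So a ``good'' hypergraph $\mathcal G$ of any fixed density $d(\e)>0$ --- which, as you note, is all a single application of dependent random choice yields when $\e$ is small --- need not contain $\mathcal F$, and your proposed remedy of iterating dependent random choice or invoking supersaturation to ``pass to a denser substructure'' has no mechanism for pushing the global density above the Tur\'an density of $\mathcal F$. This is precisely the obstruction that forces $\e$ to be an absolute constant in the classical argument, and your sketch does not overcome it.

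What actually suffices, and what the paper uses, is \emph{local} (hereditary) density rather than global density: by Theorem~\ref{embed} (Kohayakawa--Nagle--R\"odl--Schacht), a $(\rho,d)$-dense $r$-uniform hypergraph contains every fixed \emph{linear} hypergraph for \emph{every} $d>0$, however small --- this is where linearity earns its keep, and it is a deep result (a relative of hypergraph regularity/removal), not an elementary Tur\'an bound. The paper does not use dependent random choice at all. It first regularises $G$ to an almost-regular balanced bipartite graph (Lemma~\ref{lemmaJSmodified}), then shows by convexity (Lemma~\ref{locallydenser}) that \emph{every} large subset $U$ of one side carries weight $\Omega_c\bigl(\binom{|U|}{r}\bigr)$ in the neighbourhood $r$-graph, then shows (Lemma~\ref{manylightr}) that in an $H$-free graph a constant proportion of this weight sits on \emph{light} edges (common neighbourhood of bounded size), so the light-edge hypergraph is $(\rho,d)$-dense; KNRS then produces $\Omega(|A|^{|X|})$ homomorphic copies of $H$, and a final counting argument, using that light edges have bounded weight and that $\d\le n^{1-1/2r}$, shows that the degenerate copies are fewer. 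If you want to salvage your outline, you must replace the nonexistent Tur\'an bound by the $(\rho,d)$-dense embedding theorem and arrange for your good hypergraph to be locally dense on all large subsets, not merely dense overall.
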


The proof of this result relies on ideas of Janzer~\cite{Ja18}, who found a simpler proof of Theorem~\ref{clsub} with much improved bounds. Since $K_3' = C_6$ and $\ex(n, C_6) = \Theta(n^{4/3})$, this result is tight up to the implied constant for $t = 3$ and it is plausible that it is also tight for all other~$t$.

\begin{theorem}[Janzer]
	For any integer $t\geq 3$, $\ex(n,K'_t)=O(n^{3/2-\frac{1}{4t-6}})$.
\end{theorem}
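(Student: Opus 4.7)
The plan is to show that every $n$-vertex graph $G$ with $m \geq C n^{3/2 - 1/(4t-6)}$ edges (for a sufficiently large constant $C = C(t)$) contains $K_t'$ as a subgraph. Since $K_t'$ consists of $t$ ``branch'' vertices $x_1, \dots, x_t$ together with a subdivision vertex $y_{ij}$ adjacent to $x_i$ and $x_j$ for each pair $i<j$, it is enough to find $t$ vertices in $G$ whose pairwise codegrees $c(x_i, x_j) := |N_G(x_i)\cap N_G(x_j)|$ are all at least $\binom{t}{2}+t$. Given such a $t$-tuple, Hall's marriage theorem produces distinct subdivision vertices $y_{ij}\in N(x_i)\cap N(x_j)\setminus\{x_1,\dots,x_t\}$: after removing the other branch vertices each option set has size at least $\binom{t}{2}$, so the Hall condition holds trivially for every subcollection of size at most $\binom{t}{2}$.

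After a standard cleaning step iteratively removing vertices of degree less than $m/(2n)$, I may assume $G$ has minimum degree $\delta = \Omega(m/n) = \omega(n^{1/2-1/(4t-6)})$. Counting paths of length two then yields
\[
\sum_{\{u,v\}} c(u,v) \;=\; \sum_{w\in V(G)} \binom{d(w)}{2} \;=\; \Omega(n\delta^2) \;=\; \omega\bigl(n^{2-2/(4t-6)}\bigr).
\]
From this first-moment bound I would pursue two parallel strategies. The first is a dependent random choice argument: pick a random set $U \subseteq V(G)$ and consider $W = \bigcap_{u \in U} N(u)$; by tuning $|U|$, the expected size of $W$ can be made large while the expected number of ``bad'' pairs in $W$ (those with codegree in $G$ below $\binom{t}{2}+t$) is made small, so that after deleting bad vertices at least $t$ vertices of the desired type remain. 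The second is to directly lower bound the homomorphism count $\sum_{x_1,\dots,x_t}\prod_{i<j}c(x_i,x_j)$ by an iterated Cauchy--Schwarz or H\"older argument, and argue via a separate count of degenerate homomorphisms (identifying a pair of branch vertices, a pair of subdivision vertices, or a subdivision with a branch vertex) that some non-degenerate copy must exist.

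The main technical obstacle is passing from a ``path-like'' or bipartite-like moment bound to a control on the full $\binom{t}{2}$-fold product of codegrees, which corresponds to the non-bipartite graph $K_t$ on the branch vertices; Sidorenko's conjecture does not directly apply. A natural route around this is induction on $t$, with base case $t=3$ where the bound $\ex(n,K_3')=\ex(n,C_6)=O(n^{4/3})$ is classical (accounting for the $-6$ in the exponent $1/(4t-6)$), and showing that each step from $t-1$ to $t$ only loses a small additional amount in the exponent. To extend a copy of $K_{t-1}'$ (guaranteed by the inductive hypothesis, since our graph has far more edges than needed for $K_{t-1}'$) to a copy of $K_t'$, I would use the surplus density to locate a new branch vertex whose codegree with each of the existing $t-1$ branch vertices is at least $\binom{t}{2}+t$; the explicit $4$ in $4t-6$ should emerge from tracking the second-moment quantity $\sum_{(u,v)} c(u,v)^2$ (counting labeled $4$-cycles) used in the Cauchy--Schwarz step of this extension argument.
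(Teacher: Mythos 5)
There is a genuine gap, and it sits at the very first step. You reduce the problem to finding $t$ vertices whose pairwise codegrees are all at least $\binom{t}{2}+t$. While such a configuration would indeed yield a copy of $K'_t$ (your greedy/Hall argument for choosing distinct subdivision vertices is fine), this is the wrong target: a graph with $\Theta(n^{3/2-\frac{1}{4t-6}})$ edges need not contain even two vertices of codegree $2$. The incidence graph of a projective plane of order $q$ has $n=2(q^2+q+1)$ vertices and $\Theta(n^{3/2})$ edges — far above the claimed threshold — yet every pair of vertices has codegree at most $1$; it nevertheless contains $K'_t$ (take $t$ points in general position). So every strategy you propose that aims to produce a $t$-tuple with uniformly large pairwise codegrees — the dependent random choice step, and the inductive extension step that looks for a new branch vertex with codegree at least $\binom{t}{2}+t$ to all existing branch vertices — is aiming at a configuration that simply need not exist in the host graph. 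Dependent random choice at this density cannot deliver it, and no second-moment bookkeeping will rescue the induction.

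The difficulty you correctly identify (controlling the $\binom{t}{2}$-fold product of codegrees and ruling out degenerate copies) is precisely the content of the theorem, and the proposal does not resolve it; it only lists candidate strategies. The actual mechanism, which this paper uses throughout (see the treatment of $K'_{s,t}$ in Section~\ref{sectionsubbipnew} and of $H_{s,t}(r)$ in Section~\ref{sectionKKL}), is the opposite of what you attempt: one works with \emph{light} edges of the neighbourhood graph $W_G$, i.e.\ pairs with codegree at least $1$ but at most a fixed constant. A key lemma shows that in a $K'_t$-free almost-regular graph a constant proportion of the total codegree weight $\sum_{u,v}d(u,v)=\Omega(n\delta^2)$ must lie on light edges (otherwise heavy edges alone build the subdivision); one then counts cliques or stars in the light-edge graph, and the \emph{upper} bound on the codegree of a light edge is exactly what lets one bound the number of degenerate homomorphic copies and extract a genuine $K'_t$. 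Without this light/heavy dichotomy, or some substitute for it, your outline cannot be completed.
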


Improving another result of Conlon and Lee~\cite{CL18}, Janzer~\cite{Ja18} also obtained the following bound for the extremal number of $K_{s,t}'$, the $1$-subdivision of $K_{s,t}$.

\begin{theorem}[Janzer] \label{subbipartite}
	For any integers $2\leq s\leq t$, $\ex(n,K'_{s,t})=O(n^{3/2-\frac{1}{4s-2}})$.
\end{theorem}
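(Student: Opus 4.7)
I would argue by contradiction: assume $G$ is $K'_{s,t}$-free on $n$ vertices with $e(G) = m \ge C n^{3/2 - 1/(4s-2)}$ for a suitably large $C = C(s,t)$, and derive a contradiction by building a copy of $K'_{s,t}$ inside $G$. A standard preliminary step is to regularize: by iterated deletion of vertices of below-average degree together with dyadic bucketing of degrees, one may pass to a subgraph, still $K'_{s,t}$-free, whose edge count is $\Theta(m)$ and in which every degree lies in some interval $[d,2d]$ with $d = \Theta(m/n) = \Theta(n^{1/2 - 1/(4s-2)})$.

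The next input is a Kővári--Sós--Turán-type bound on high-codegree pairs. Fix a large constant threshold $K = K(s,t)$ (it suffices to take $K = s + t + st$), and define the \emph{codegree graph} $H_K$ on $V(G)$ by declaring $\{u,v\}$ an edge precisely when $c(u,v) := |N(u) \cap N(v)| \ge K$. Then $H_K$ must be $K_{s,t}$-free: given a $K_{s,t}$ in $H_K$ with parts $\{u_1,\dots,u_s\}$ and $\{v_1,\dots,v_t\}$, one can greedily select distinct subdivision vertices $w_{ij} \in N(u_i) \cap N(v_j)$ at each of the $st$ steps (avoiding the fewer than $K$ vertices already in use), producing a $K'_{s,t} \subseteq G$. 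Hence $e(H_K) \le C_1(s,t) n^{2-1/s}$ by Kővári--Sós--Turán; crucially, the same argument also bounds $e(H_{2^\ell K}) \le C_1 n^{2-1/s}$ for every $\ell \ge 0$, giving a dyadic estimate on the number of pairs of any given codegree range $[2^\ell K, 2^{\ell+1}K)$.

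The heart of the argument is a counting comparison. The number of paths of length two in $G$ equals $\sum_{\{u,v\}} c(u,v) \gtrsim m^2/n$ by near-regularity, and the contribution from pairs with $c(u,v) < K$ is only $O(n^2)$, so the high-codegree pairs carry essentially all of it. The plan is to compare a weighted moment $\sum_{\{u,v\}} c(u,v)^\alpha$---or a related count such as the number of homomorphic ``spiders'' (i.e.\ homomorphic copies of $K'_{1,s}$) in $G$---between a lower bound from convexity applied to the total codegree and an upper bound from the dyadic KST estimates above together with the cap $c(u,v) \le 2d$. Balancing these at the optimal exponent $\alpha$ should force a contradiction precisely at the target threshold $d = \Theta(n^{1/2 - 1/(4s-2)})$.

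The main obstacle is the moment comparison itself. Naive integer choices of $\alpha$ recover only weaker bounds---$\alpha = 1$ yields the Kővári--Sós--Turán exponent $O(n^{2-1/s})$, and $\alpha = 2$ (equivalently, $C_4$-counting) yields $O(n^{2-1/(2s)})$---both strictly weaker than the target for $s \ge 3$. Extracting the sharp exponent $1/(4s-2)$ requires using \emph{simultaneously} near-regularity to cap codegrees by $O(d)$, the dyadic KST estimate to control each dyadic codegree range, and a carefully tuned (likely fractional or iteratively-refined) moment, most likely preceded by an additional edge-cleaning step that discards edges failing to lie in enough structured substructures. I expect the bookkeeping that makes these ingredients cohere, and the combinatorial optimization over the free parameter, to be the principal technical hurdle.
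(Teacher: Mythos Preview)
This theorem is quoted in the paper as a prior result of Janzer and is not proved there; the paper instead establishes the stronger bound of Theorem~\ref{subbipnew}, so there is no direct paper proof of this exact statement to compare against, though the tools in Section~\ref{sectionsubbipnew} (partly borrowed from Janzer) indicate the method. That said, your proposal has a genuine gap, which you yourself flag. The regularization step and the observation that the heavy-codegree graph $H_K$ is $K_{s,t}$-free are correct and useful, but after that you only propose a moment comparison on $\sum c(u,v)^\alpha$ without carrying it out, and the integer cases you test give strictly weaker exponents. You describe what remains as ``bookkeeping'' and ``combinatorial optimization over a free parameter''; in fact it is the entire content of the theorem, and nothing in the framework as written singles out the threshold $n^{1/2-1/(4s-2)}$.

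The idea you are missing, visible in the light-edge lemma the paper quotes from Janzer in Section~\ref{sectionsubbipnew}, runs dual to your heavy-edge setup. From your own K\H{o}v\'ari--S\'os--Tur\'an bound on heavy pairs it follows that the number of \emph{light} pairs---those with codegree in $[1,K)$---is $\Omega(n\delta^2)$, since they must carry almost all of the total codegree $\sum_{\{u,v\}} c(u,v)=\Omega(n\delta^2)$ while each contributes only $O(1)$. One then works inside this unweighted light-edge graph on $\approx n$ vertices with $\Omega(n\delta^2)$ edges: count $s$-stars there by convexity, note that each gives a homomorphic $K'_{1,s}$ in $G$, and use the bounded codegree along light edges to control how many of the resulting homomorphic copies of $K'_{s,t}$ are degenerate. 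It is this embed-and-control-degeneracy argument in the light-edge graph, not a moment inequality on raw codegrees, that delivers the exponent.
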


This theme was again taken up in a recent paper of Kang, Kim and Liu~\cite{KKL18}, where they made the following conjecture about the $1$-subdivision of a general bipartite graph.

\begin{conjecture}[Kang--Kim--Liu]
	Let $H$ be a bipartite graph. If $\ex(n,H)=O(n^{1+\a})$ for some $\a>0$, then $\ex(n,H')=O(n^{1+\frac{\a}{2}})$.
\end{conjecture}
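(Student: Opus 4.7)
The plan is to bound the edges of an $H'$-free host graph via an auxiliary codegree graph. Assume $G$ is an $n$-vertex $H'$-free graph with $m$ edges; the goal is $m = O(n^{1+\alpha/2})$. Apply the standard Erd\H{o}s peeling argument (iteratively delete vertices of degree below the current average) to reduce to a subgraph with $\Omega(m)$ edges and minimum degree at least half the average, then dyadically pigeonhole on degrees so that, losing only a further logarithmic factor, we may take $G$ to be nearly $D$-regular with $D = \Theta(m/n)$.

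Set $c := v(H) + e(H)$ and let $G^*$ be the graph on $V(G)$ whose edges are the pairs $\{u,v\}$ with $d_G(u,v) := |N_G(u) \cap N_G(v)| \geq c$. A copy of $H$ in $G^*$ always lifts to a copy of $H'$ in $G$: order the edges of $H$ and pick, for each edge, a subdivision vertex from its common neighborhood that avoids the branch vertices together with the previously chosen subdivision vertices; at each step the forbidden set has fewer than $c$ elements while the common neighborhood has at least $c$, so the greedy choice succeeds. Hence $G^*$ is $H$-free and the hypothesis gives $|E(G^*)| \leq \ex(n,H) = O(n^{1+\alpha})$.

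Now double-count cherries: $\sum_v \binom{d(v)}{2} = \Theta(m^2/n)$ equals $\sum_{\{u,v\}} d_G(u,v)$. Split the right-hand side into thin pairs (codegree below $c$, contributing $O(n^2)$) and thick pairs (edges of $G^*$). The crude bound $d_G(u,v) \leq D$ on thick pairs yields a thick contribution of at most $D \cdot |E(G^*)| = O(m n^\alpha)$. Combining gives $m^2/n = O(mn^\alpha + n^2)$, and hence only $m = O(\max(n^{1+\alpha}, n^{3/2}))$, which falls short of the target $n^{1+\alpha/2}$ by a polynomial factor whenever $\alpha \in (0,1)$.

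Closing this gap is the main obstacle. My plan is to refine the thick contribution via a dyadic decomposition of codegrees: for each $K \geq c$ the pairs with codegree in $[K, 2K)$ again form an $H$-free graph (apply the lifting lemma with threshold $K$), so this level set has at most $O(n^{1+\alpha})$ edges. Summing geometrically in $K$ still recovers the naive bound, so the argument must exploit the structural fact that a pair of codegree $K$ witnesses a $K_{2,K}$ in $G$. I would try either to pass to a dense bipartite substructure spanned by many such common neighborhoods and iterate, or to prove a weighted/supersaturated strengthening of $\ex(n,H) = O(n^{1+\alpha})$ in which the edges of $G^*$ are weighted by their codegree (so that levels with large $K$ count themselves as ``many'' edges). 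Turning this heuristic into a rigorous proof valid for every bipartite $H$ is the central technical difficulty; for $H = K_{s,t}$ the product structure permits a direct count-and-compare attack, and the conjecture can be established in sharp form (along the lines of Theorem~\ref{subbipartite}), but the general case seems to require either an inductive densification scheme or a new combinatorial tool sensitive to the precise bipartite structure of $H$.
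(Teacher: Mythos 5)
You have attempted to prove a statement that the paper itself does not prove: this is an open conjecture of Kang, Kim and Liu, recorded in the introduction precisely because it remains unresolved. The paper establishes only the special case $H = K_{s,t}$ (Theorem~\ref{subbipnew}), and even that requires a substantially more delicate argument than the one you outline. So no complete proof should be expected here, and indeed your write-up is not one: you candidly flag that your double count yields only $m = O(\max(n^{1+\alpha}, n^{3/2}))$, which misses the target $n^{1+\alpha/2}$ by a polynomial factor for every $\alpha \in (0,1)$, and the refinements you propose (dyadic codegree levels, a weighted supersaturation of $\ex(n,H)$) are left as heuristics. That is the genuine gap, and it is exactly the difficulty that keeps the conjecture open. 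Your lifting lemma is fine --- with $c = v(H) + e(H)$ the greedy choice of subdivision vertices does succeed, so $G^*$ is $H$-free --- but bounding $e(G^*)$ alone simply does not see enough of the structure: the cherry count charges each thick pair its full codegree (up to $D$), and nothing in the hypothesis $\ex(n,H) = O(n^{1+\alpha})$ prevents the codegree mass from concentrating on few pairs.

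For comparison, the paper's proof of the $K_{s,t}$ case starts from the same codegree (``neighbourhood'') graph but extracts more from it in three ways. First, it regularizes the host graph (Lemma~\ref{lemmaJSmodified}) so that degrees, and hence codegree sums, are controlled pointwise rather than on average. Second, instead of bounding the number of thick pairs, it proves (Corollary~\ref{light}) that a constant proportion of the total codegree weight lives on \emph{light} pairs, i.e.\ pairs of codegree at most a constant; this is where $H'$-freeness is used quantitatively, not just to make an auxiliary graph $H$-free. Third, it double-counts $s$-stars in the light codegree graph to produce $\omega(n^s)$ candidate copies of $K_{s,t}$, and then shows that if all of them were degenerate, some $s$-set inside a single neighbourhood $N_G(b)$ would have common codegree-neighbourhood of size $\omega(\delta)$, which forces a genuine $K'_{s,t}$ by a maximal-disjoint-paths argument. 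Each of these steps leans on the product structure of $K_{s,t}$ (embedding $K_{s,t}$ reduces to finding one $s$-set with a large common neighbourhood), which is why the method does not transfer to arbitrary bipartite $H$ and why the general conjecture remains open.
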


In particular, as $\ex(n,K_{s,t})=O(n^{2-\frac{1}{s}})$, they conjectured that $\ex(n,K'_{s,t})=O(n^{3/2-\frac{1}{2s}})$, though they were only able to push their methods to give an alternative proof of Theorem~\ref{subbipartite}. Our next result is a proof of this latter conjecture.

\begin{theorem} \label{subbipnew}
	For any integers $2\leq s\leq t$,  $\ex(n,K'_{s,t})=O(n^{3/2-\frac{1}{2s}})$.
\end{theorem}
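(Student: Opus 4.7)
The plan is to show that any graph $G$ on $n$ vertices with $|E(G)| \geq C n^{3/2-1/(2s)}$ for a sufficiently large constant $C=C(s,t)$ must contain a copy of $K'_{s,t}$, refining Janzer's approach to Theorem~\ref{subbipartite}. The first step is a standard regularization: by iteratively removing vertices of abnormally low degree, I pass to a subgraph $G^{*}\subseteq G$ whose degrees all lie in $[d,Kd]$ for some $d=\Theta(n^{1/2-1/(2s)})$ and constant $K=K(s,t)$.

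Next I would set up a weighted counting argument. For each ordered $s$-tuple $\mathbf{a}=(a_1,\ldots,a_s)$ of distinct vertices, define
\[
    f(\mathbf{a}) := \sum_{b\in V(G^{*})} \prod_{i=1}^{s}|N(a_i)\cap N(b)|,
\]
which counts (with multiplicity) the candidate extensions $(b,v_1,\ldots,v_s)$ with $v_i\in N(a_i)\cap N(b)$. Summing $f(\mathbf{a})^{t}$ over all ordered $s$-tuples $\mathbf{a}$ counts weighted homomorphisms of $K'_{s,t}$ into $G^{*}$. A double-counting combined with the degree bounds gives
\[
    \sum_{\mathbf{a}} f(\mathbf{a}) = \sum_{b}\Big(\sum_{w\in N(b)} d(w)\Big)^{s} \geq n d^{2s},
\]
and the power-mean inequality then yields $\sum_{\mathbf{a}} f(\mathbf{a})^{t} \geq n^{s-t(s-1)} d^{2st}$, which for $d$ at the assumed threshold is already of the ``correct'' order of magnitude $n^{s}$.

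The principal task is then to upgrade this weighted count to an actual embedding in which all $s+t+st$ vertices are distinct. This reduces to bounding the number of ``collision'' homomorphisms in which some two vertices of $K'_{s,t}$ coincide. The dominant collision type is $v_{ij}=v_{i'j'}$ with $(i,j)\neq(i',j')$, which replaces a product of codegrees by a single codegree, or by a triple codegree $|N(a_i)\cap N(b_j)\cap N(b_{j'})|$, each bounded by $Kd$ via the max-degree restriction.

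The hard part will be making this collision bound sharp enough to yield the exponent $3/2-1/(2s)$ rather than the weaker $3/2-1/(4s-2)$ obtained in Theorem~\ref{subbipartite}; a direct Cauchy--Schwarz or H\"older bound saves only half the required factor. To close this gap I would dyadically decompose vertex pairs by their codegree: for high-codegree pairs (codegree at least $st$) a K\H{o}v\'ari--S\'os--Tur\'an bound on the simple ``codegree graph'' combined with a greedy rainbow selection of distinct witnesses produces $K'_{s,t}$ immediately, while low-codegree pairs are handled by the weighted count above. Since $3/2-1/(2s)$ is precisely the K\H{o}v\'ari--S\'os--Tur\'an exponent for $K_{s,t}$ with the extra exponent halved to reflect subdivision, balancing these two cases should deliver the sharp bound.
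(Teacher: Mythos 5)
Your setup (regularise to an almost-regular graph, pass to the weighted codegree/neighbourhood graph on one side, and count weighted homomorphisms of $K'_{s,t}$) matches the paper's framework, but the two steps you flag as "the hard part" are exactly where the proposal breaks down, and the fix you sketch does not work. The dichotomy "heavy codegree pairs are handled by K\H{o}v\'ari--S\'os--Tur\'an, light ones by the weighted count" fails because KST controls only the \emph{number} of heavy pairs, not the \emph{weight} they carry: a single pair with codegree $\geq st$ can contribute up to $Kd=\Theta(n^{1/2-1/(2s)})$ to the total weight $W=\sum_b\binom{d(b)}{2}=\Omega(nd^2)$, so all of $W$ could sit on as few as $nd^2/d = nd \ll n^{2-1/s}$ heavy pairs. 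In that regime the heavy codegree graph is well below the KST threshold (so no $K_{s,t}$ is forced in it) and simultaneously the light pairs carry essentially no weight, so neither branch of your dichotomy fires. The paper closes this hole with a genuinely different lemma (Janzer's Lemma 10, quoted before Corollary 4.3): in a $K'_{s,t}$-free graph the \emph{number of light edges} is at least a constant fraction of the \emph{total weight}. Its proof is local, not global: inside each neighbourhood $N(b)$ the heavy pairs cannot contain a clique of size $s+t$ (else a rainbow selection gives $K'_{s,t}$), so a positive fraction of the $\binom{d(b)}{2}$ pairs in $N(b)$ are light, and each light pair is overcounted only $O(1)$ times. This is the ingredient your dyadic/KST decomposition is missing.

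The second gap is the elimination of degenerate homomorphisms, which you correctly identify as the crux but leave unresolved: you concede that direct Cauchy--Schwarz/H\"older collision counting loses a factor, and the rainbow-greedy step only applies once codegrees are already $\geq st$, which is not the typical situation (typical codegrees here are $O(1)$). The paper does \emph{not} beat the degenerate count term by term. Instead it argues structurally: after Lemma 4.2 produces $\omega(n^s)$ "stars" $(u_1,\dots,u_s;x)$ with distinct witnesses in $B$, Lemma 4.4 shows that if $G$ is $K'_{s,t}$-free and some $s$-set has $\omega(1)$ such common $W$-neighbours, then by maximality and pigeonhole a positive fraction of those neighbours all lie in a single $N_G(b)$; iterating this (Corollary 4.5 plus the Claim in the proof of Theorem 4.1) produces an $s$-set $\{x_1,\dots,x_s\}$ spanning no heavy pair whose common $W$-neighbourhood has the abnormally large size $\omega(\d)$, and a final maximal-disjoint-witness argument shows such a set forces a genuine $K'_{s,t}$. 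Without an analogue of this "concentration of degeneracies" mechanism, your outline does not reach the exponent $3/2-\frac{1}{2s}$.
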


Moreover, this result is tight when $t$ is sufficiently large compared to $s$.

\begin{corollary} \label{subbipnewcor}
	For any integer $s\geq 2$, there exists some $t_0=t_0(s)$ such that if $t\geq t_0$, then $\ex(n,K'_{s,t})=\Theta(n^{3/2-\frac{1}{2s}})$.
\end{corollary}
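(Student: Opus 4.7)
The upper bound $\ex(n, K'_{s,t}) = O(n^{3/2-1/(2s)})$ is immediate from Theorem~\ref{subbipnew}, so only the matching lower bound $\Omega(n^{3/2-1/(2s)})$ for $t$ sufficiently large in terms of $s$ needs to be established.

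My plan is to build, from the Kollár--Rónyai--Szabó norm graph, a bipartite graph $H$ of the required density. Recall that for any $t_0 > (s-1)!$ there is a bipartite $K_{s,t_0}$-free graph $G_0$ on $N+N$ vertices with $\Theta(N^{2-1/s})$ edges. I would set $H$ to be bipartite with parts $A$ and $W$, each of size $\Theta(N)$, such that each $w \in W$ has degree $d = \Theta(N^{(s-1)/(2s)})$ in $A$. This gives $|E(H)| = |W|d = \Theta(N^{(3s-1)/(2s)}) = \Theta(n^{3/2-1/(2s)})$, matching the target. The neighborhoods $N_H(w)$ would be chosen so that the auxiliary \emph{covered-pair} graph $\Gamma_A$ on $A$ (with $\{a, a'\}$ an edge iff some $N_H(w)$ contains both) is $K_{s,t}$-free, and symmetrically the graph $\Gamma_W$ on $W$ (with $\{w, w'\}$ an edge iff $N_H(w) \cap N_H(w') \neq \emptyset$) is $K_{s,t}$-free. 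Any hypothetical $K'_{s,t}$ in $H$, whose branches must lie on one side of the bipartition and subdivisions on the other, would then force a $K_{s,t}$ in one of these auxiliary graphs, a contradiction.

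The hardest point is realising both auxiliary $K_{s,t}$-free conditions simultaneously at the right density. A purely random choice of the $N_H(w)$'s, treated via alteration, delivers only the exponent $(3st-s-t)/(2st-1)$, which is strictly less than $(3s-1)/(2s)$ for every finite $t$. Closing this gap calls for algebraic input: one should take the $N_H(w)$'s to be structured subsets derived from $G_0$ (for instance, algebraic slices related to the norm map), so that $\Gamma_A$ embeds into a near-copy of $G_0$ and thereby inherits $K_{s,t_0}$-freeness; the analogous condition on $\Gamma_W$ is then enforced by a parallel algebraic choice or by a light random perturbation, with the hypothesis that $t$ is large in terms of $s$ absorbing the combinatorial constants that appear in the alteration step. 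Assembling these ingredients and combining with Theorem~\ref{subbipnew} yields the claimed $\Theta(n^{3/2-1/(2s)})$.
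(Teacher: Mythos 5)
Your upper bound is indeed immediate from Theorem~\ref{subbipnew}, but the lower bound --- which is the entire content of the corollary beyond that theorem --- is not actually proved in your proposal. You correctly set up the combinatorial reduction (a copy of $K'_{s,t}$ in a bipartite host forces a $K_{s,t}$ in one of the two auxiliary graphs $\Gamma_A$, $\Gamma_W$), and you correctly observe that a purely random choice of the neighbourhoods $N_H(w)$ falls short of the exponent $\frac{3}{2}-\frac{1}{2s}$. But the step that would close the gap --- an explicit construction in which $\Gamma_A$ and $\Gamma_W$ are simultaneously $K_{s,t}$-free at density $\Theta(n^{3/2-\frac{1}{2s}})$ --- is only gestured at (``algebraic slices related to the norm map'', ``a parallel algebraic choice or a light random perturbation''). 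No such construction is exhibited, and producing one is genuinely hard: the densities you need for $\Gamma_A$ and $\Gamma_W$ sit exactly at the $K_{s,t}$ extremal threshold $n^{2-1/s}$, so there is no slack for an alteration argument, and nothing about the norm graph readily yields the required two-sided structure by ``slicing''. As written, this is a plan whose critical step is missing, so the lower bound is unproved.

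The paper sidesteps all of this by quoting Lemma~\ref{lemmaBC} (the Bukh--Conlon random algebraic construction), which is stated in the preliminaries precisely for this purpose. One observes that $K'_{s,t}$ is the rooted $t$-blowup of $K'_{s,1}$ with roots the $s$ leaves; this rooted graph is balanced and bipartite with $\rho(K'_{s,1})=\frac{2s}{s+1}$, since it has $2s$ edges and $s+1$ non-root vertices and the local density condition is easily checked on subsets. Lemma~\ref{lemmaBC} then gives $\ex(n,K'_{s,t})=\Omega\bigl(n^{2-\frac{s+1}{2s}}\bigr)=\Omega\bigl(n^{3/2-\frac{1}{2s}}\bigr)$ for all $t$ sufficiently large in terms of $s$, and combining with Theorem~\ref{subbipnew} finishes the proof. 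If you insist on a construction of your flavour rather than this citation, you would essentially have to redo the Bukh--Conlon random algebraic argument for the rooted tree $K'_{s,1}$, which is substantially more work than your sketch suggests.
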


We now turn to another central conjecture in extremal graph theory. Following Kang, Kim and Liu~\cite{KKL18}, we say that $r\in (1,2)$ is \emph{realisable} (by $H$) if there exists a graph $H$ such that $\ex(n,H)=\Theta(n^r)$. The rational exponents conjecture of Erd\H{o}s and Simonovits (see, for example,~\cite{Er81}) states that every rational between 1 and 2 is realisable.

\begin{conjecture}[Rational exponents conjecture] \label{rational}
	For every rational number $r\in (1,2)$, there exists a graph $H$ with $\ex(n,H)=\Theta(n^r)$.
\end{conjecture}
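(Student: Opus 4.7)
Since Conjecture~\ref{rational} has been open since Erd\H{o}s and Simonovits posed it over forty years ago, I can only propose a program rather than sketch a complete argument. The plan is to attack it by enlarging the set of realisable exponents through closure operations until this set exhausts $\Q\cap(1,2)$.

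First I would collect all base exponents currently known to be realisable by a single graph: $2-1/s$ for $s\geq 2$ via $K_{s,t}$ with $t$ large (Koll\'ar--R\'onyai--Szab\'o, Alon--R\'onyai--Szab\'o), $1+1/k$ for small $k$ via even cycles and theta graphs, and the family $1+s/(sk+1)$ supplied by the graphs $L$ of this paper. Combined with Bukh--Conlon's theorem that every rational in $(1,2)$ is realised by some \emph{finite family} of graphs, this already gives a dense subset $\mathcal{R}\subseteq (1,2)$.

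Next I would try to prove that $\mathcal{R}$ is closed under the subdivision map $H\mapsto H^k$. The Kang--Kim--Liu conjecture, verified here in the $K_{s,t}$ case, predicts that this operation sends an exponent $1+\a$ to $1+\a/(k+1)$ on the upper-bound side. The matching lower bound would require new constructions — a natural candidate being a random algebraic graph tuned to the base exponent and then subdivided in an algebraically encoded way — but even a robust upper-bound closure, when combined with blow-up and fibre-product operations, would yield a dense subfamily of exponents that is topologically well behaved.

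The crux, and where I expect the argument to stall, is the passage from finite-family realisability to single-graph realisability. All current lower-bound methods — norm graphs, projective norm graphs, random algebraic constructions — produce graphs that are $H$-free only when $H$ lies in some explicit finite collection $\mathcal{F}$, and cannot pin down a single extremal target; this is exactly why Bukh--Conlon's exponents are realised only up to taking a union over $\mathcal{F}$. Matching a single graph attached to a prescribed rational $p/q$ would require a rigidity statement identifying the "worst" $H\in\mathcal{F}$ with a canonical graph coded by $p/q$, for which no existing technique seems strong enough. I expect that a genuine proof of Conjecture~\ref{rational} will demand a new lower-bound method with this rigidity built in, rather than any further combinatorial manipulation of existing bounds.
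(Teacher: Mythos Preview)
The statement you are asked to address is a \emph{conjecture}, not a theorem; the paper offers no proof of it and explicitly describes it as ``wide open.'' You correctly recognise this and do not pretend to give an argument. Your proposal is therefore not a proof and cannot be compared to one in the paper, because none exists.

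As a research programme your outline is reasonable and touches the right ingredients: the known realisable exponents, the Bukh--Conlon finite-family result, the subdivision conjecture of Kang--Kim--Liu, and the gap between family and single-graph realisability. Your diagnosis of the main obstruction --- that current lower-bound constructions only exclude a finite family rather than a single prescribed graph --- is accurate and is exactly why the conjecture remains open. One minor correction: the Kang--Kim--Liu conjecture concerns the $1$-subdivision $H'$, sending $1+\alpha$ to $1+\alpha/2$, not the $k$-subdivision sending $1+\alpha$ to $1+\alpha/(k+1)$; the latter is not what they conjectured, though it is a natural extrapolation. Also, the exponent $1+1/k$ is realisable for \emph{all} $k\geq 1$ by theta graphs (Faudree--Simonovits for the upper bound, Conlon for the lower), not just for small $k$.

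In summary: there is no proof to evaluate, and you have correctly identified both that fact and the principal barrier to one.
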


In a recent breakthrough, Bukh and Conlon \cite{BC17} have proved that for any rational number $r\in (1,2)$ there exists a finite family $\mathcal{H}$ of graphs such that $\ex(n,\mathcal{H})=\Theta(n^r)$, where $\ex(n,\mathcal{H})$ denotes the maximal number of edges in an $n$-vertex graph which does not contain any $H\in \mathcal{H}$ as a subgraph.

However, Conjecture \ref{rational} remains wide open. In fact, until very recently only a few realisable numbers were known, namely, $1+\frac{1}{m}$ and $2-\frac{1}{m}$ for $m\geq 2$. We have already seen how the exponents $2 - 1/m$ arise from complete bipartite graphs. The exponent $1 + \frac{1}{m}$ is realisable by the theta graph $\theta_{m,\ell}$ consisting of $\ell$ internally disjoint paths of length $m$ between two vertices, with the upper bound being due to Faudree and Simonovits~\cite{FS83} and the matching lower bound for $\ell$ sufficiently large due to Conlon~\cite{C18}.  

Just a few months ago, Jiang, Ma and Yepremyan \cite{JMY18} enlarged the class of realisable exponents by proving that $7/5$ and $2-\frac{2}{2m-1}$ for $m \geq 2$ are also realisable. Subsequently, Kang, Kim and Liu \cite{KKL18} proved that for each $a,b\in \N$ with $a<b$ and $b\equiv \pm 1 \;(\bmod\; a)$, the number $2-\frac{a}{b}$ is realisable, a result which then included all known examples of realisable exponents. Their main result was a tight upper bound on the extremal number of certain graphs from which the result just mentioned for $b\equiv -1 \;(\bmod\; a)$ follows fairly easily. We now define this family of graphs.

Consider a graph $F$ with a set $R\subsetneq V(F)$ of root vertices. The {\it $\ell$-blowup} of this rooted graph is the graph obtained by taking $\ell$ vertex-disjoint copies of $F$ and identifying the different copies of $v$ for each $v\in R$. We let $H_{s,1}(r)$ be the graph consisting of vertices $x_i$ ($1\leq i\leq r-1$), $y$, $z_j$ ($1\leq j\leq s$) and $w_{j,k}$ ($1\leq j\leq s, 1\leq k\leq r-1$) and edges $x_iy$ for all $i$, $yz_j$ for all $j$ and $z_jw_{j,k}$ for all $j,k$. Then $H_{s,t}(r)$ is the rooted $t$-blowup of $H_{s,1}(r)$, with the roots being $\{x_i: 1\leq i\leq r-1\}\cup \{w_{j,k}: 1\leq j\leq s, 1\leq k\leq r-1\}$. For a picture, we refer the reader to Figure~\ref{fig:trees}, where the root vertices are marked by rectangular boxes.
The result of Kang, Kim and Liu~\cite[Lemma 3.2]{KKL18} is now as follows.

\begin{theorem}[Kang--Kim--Liu] \label{theoremKKL}
	For any integers $s,t\geq 1$ and $r\geq 2$, $\ex(n,H_{s,t}(r))=O(n^{2-\frac{s+1}{r(s+1)-1}})$.
\end{theorem}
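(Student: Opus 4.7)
The approach is a direct counting-plus-pigeonhole argument. Set $\alpha = 1 - \frac{s+1}{r(s+1)-1}$, and suppose $G$ is an $n$-vertex graph with at least $Cn^{1+\alpha}$ edges, for some large $C=C(r,s,t)$. The plan is to count homomorphic copies of the gadget $H_{s,1}(r)$, pigeonhole over the root vertices, and greedily extract $t$ vertex-disjoint completions to form the $t$-blowup $H_{s,t}(r)$. First I would apply the standard minimum-degree reduction, iteratively deleting vertices of degree below $cn^\alpha$ with $c=c(C)$, to pass to a subgraph $G^* \subseteq G$ with $\delta(G^*) \ge cn^\alpha$ and $e(G^*) \ge \tfrac{C}{2}n^{1+\alpha}$.

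The main step is to lower-bound the number $\Lambda$ of ordered tuples $(y; x_1, \dots, x_{r-1}; z_1, \dots, z_s; w_{1,1}, \dots, w_{s,r-1})$ in $G^*$ respecting the adjacencies of $H_{s,1}(r)$ (with repetitions allowed):
\[
\Lambda = \sum_{y \in V(G^*)} d(y)^{r-1} \left(\sum_{z \in N(y)} d(z)^{r-1}\right)^s.
\]
Using the minimum-degree condition, the inner sum is at least $d(y)\cdot (cn^\alpha)^{r-1}$, so $\Lambda \ge (cn^\alpha)^{s(r-1)} \sum_y d(y)^{r-1+s}$. Applying Jensen's inequality to $\sum_y d(y)^{r-1+s}$ in the form $\sum_y d(y)^{r-1+s} \ge |V(G^*)|^{2-r-s}(2e(G^*))^{r-1+s}$, using $|V(G^*)|\le n$, and invoking the key identity $\alpha(r(s+1)-1)=(r-1)(s+1)-1$, a short calculation produces $\Lambda \ge c_1\, n^{(s+1)(r-1)}$ for some constant $c_1 = c_1(r,s,C)$ that tends to infinity with $C$.

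Since the number of ordered root tuples $(x_1, \dots, x_{r-1}, w_{1,1}, \dots, w_{s,r-1})$ is at most $n^{(s+1)(r-1)}$, pigeonhole produces a root tuple $R^*$ admitting at least $c_1$ completions $(y, z_1, \dots, z_s)$. For $C$ large enough in terms of $r,s,t$, the constant $c_1$ is as large as needed, and I would then greedily extract $t$ completions whose non-root vertices are pairwise distinct and disjoint from $R^*$; together with $R^*$ they form the required copy of $H_{s,t}(r)$. The main obstacle is managing \emph{vertex distinctness}: both the non-injective contributions to $\Lambda$ (collisions such as $y=x_i$ or $z_j=z_{j'}$) and the disjointness requirement across the $t$ extracted completions. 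Each such collision, however, costs at least one factor of $cn^\alpha$ in the count compared to the leading term, so all degenerate configurations remain a vanishing fraction of $\Lambda$ whenever $C$ is taken sufficiently large in terms of $r, s, t$.
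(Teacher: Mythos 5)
Your counting is correct up to the pigeonhole: the exponent identity $\alpha(r(s+1)-1)=(r-1)(s+1)-1$ does give $\Lambda\gtrsim c_1 n^{(r-1)(s+1)}$, matching the number of root tuples, so some root tuple $R^*$ admits $c_1$ completions $(y,z_1,\dots,z_s)$ with $c_1\to\infty$ as $C\to\infty$. The genuine gap is the final "greedily extract $t$ disjoint completions" step. Your justification --- that every collision costs a factor of $cn^{\alpha}$ --- is valid only for degeneracies \emph{within} a single homomorphic copy (e.g.\ $y=x_i$). It says nothing about distinct, perfectly non-degenerate completions of the same root tuple \emph{sharing} a non-root vertex, and nothing prevents all $c_1$ completions from using the same vertex $y$, or the same $z_j$. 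Concretely, all completions share $y=v$ exactly when the $r$-set $\{v,w_{j,1},\dots,w_{j,r-1}\}$ has a large common neighbourhood for each $j$; such "heavy" $r$-sets can absorb the entire homomorphism count, in which case a maximal disjoint subfamily has size $1$ and the greedy extraction fails no matter how large $c_1$ is. (If this step worked as stated, the same argument would prove the tight upper bound for the blow-up of essentially any tree of the right density, which is the open Bukh--Conlon conjecture for trees.)

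This missing ingredient is precisely the content of the paper's proof. There, an $r$-set is declared \emph{light} if its common neighbourhood has size at most a constant $L$ and \emph{heavy} otherwise; Lemma~\ref{rlight} shows that in an $H_{s,t}(r)$-free graph the heavy $r$-edges carry a vanishing proportion of the total weight (this uses a Ramsey/K\H{o}v\'ari--S\'os--Tur\'an-type argument inside common neighbourhoods and is where the hypothesis of $H$-freeness actually enters), and Lemma~\ref{goodh} then shows that most copies of $H_{s,1}(r)$ are \emph{good}, i.e.\ the $r$-sets $\{x_1,\dots,x_{r-1},z_i\}$ and $\{y,w_{i,1},\dots,w_{i,r-1}\}$ are all light. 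After pigeonholing on the roots (as you do), goodness guarantees that any fixed non-root vertex lies in only $O_L(1)$ of the completions, so a maximal root-disjoint subfamily of size less than $t$ forces a contradiction. You would need to add this light/heavy dichotomy (restricting the count $\Lambda$ to good copies) before the extraction step; note also that the paper works in a $K$-almost-regular subgraph (Lemma~\ref{lemmaJS}) rather than just a minimum-degree subgraph, since upper bounds $\Delta(G)\le K\delta$ are needed to show the bad copies are few.
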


In Section \ref{sectionKKL}, we give a new proof of this result which is significantly shorter than the original one. Combined with results of Bukh and Conlon~\cite{BC17}, Theorem~\ref{theoremKKL} easily implies that $2-\frac{s+1}{r(s+1)-1}$ is realisable for every $s\geq 1, r\geq 2$. Therefore, following Kang, Kim and Liu, we see that $2-\frac{1}{r}$ is a limit point of the set of realisable exponents for every integer $r\geq 2$.

To go further, we define $L_{s,t}(k)$ to be the graph which is the $(k-1)$-subdivision of $K_{s,t}$ with an extra vertex joined to all vertices in the part of size $t$. Put differently, this graph is the rooted $t$-blowup of $L_{s,1}(k)$, where $L_{s,1}(k)$ has vertices $u$, $v$, $w_{i,j}$ ($1\leq i\leq k, 1\leq j\leq s$) and edges $uv$, $vw_{1,j}$ ($1\leq j\leq s$), $w_{i,j}w_{i+1,j}$ ($1\leq i\leq k-1$, $1\leq j\leq s$), with roots $u,w_{k,1},\dots,w_{k,s}$. We refer the reader to Figure~\ref{fig:trees} for an illustration, where again the root vertices are marked by rectangular boxes.
We have the following result.

\begin{theorem} \label{longermain}
	For any integers $s,t,k \geq 1$, $\ex(n,L_{s,t}(k))=O(n^{1+\frac{s}{sk+1}})$.
\end{theorem}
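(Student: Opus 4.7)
The plan is to combine a supersaturation path-count with a greedy extraction of mutually disjoint spiders, in the style of Janzer's proof of Theorem~\ref{subbipartite} and of Kang--Kim--Liu's proof of Theorem~\ref{theoremKKL}. Set $\alpha=s/(sk+1)$. First observe that, viewed as a tree rooted at $\{u,w_{k,1},\dots,w_{k,s}\}$, the graph $L_{s,1}(k)$ has $sk+1$ edges and $sk-s+1$ non-root vertices, giving a Bukh--Conlon-style ``edges per non-root vertex'' density of $(sk+1)/(sk-s+1)$ and hence the predicted exponent $2-(sk-s+1)/(sk+1)=1+\alpha$. Structurally, $L_{s,t}(k)$ consists of a root $u$, $t$ distinct \emph{hubs} $v^{(\ell)}\in N(u)$, a common \emph{terminal} $s$-tuple $\vec e=(e_{1},\dots,e_{s})$, and, for each hub, an internally disjoint $s$-spider of length-$k$ paths from $v^{(\ell)}$ to $\vec e$, with the $t$ spider-interiors pairwise disjoint.

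Assume $e(G)\ge Cn^{1+\alpha}$ for a sufficiently large constant $C=C(s,t,k)$ and apply the usual iterative min-degree cleanup to pass to a subgraph of minimum degree $d\ge c_{0}n^{\alpha}$. An \emph{anchored spider} is the union of an edge $uv$ with a length-$k$ $s$-spider from $v$ to some $\vec e$; it has $sk+1$ edges. A na\"ive walk-based count using minimum degree gives at least $nd^{sk+1}=n^{s+1}$ anchored walk-structures, using $\alpha(sk+1)=s$. The main technical step is to replace walks by genuine internally vertex-disjoint paths while losing only a constant factor: one bounds the contribution of bad walks (those containing a repeated vertex or a collision between two spider-legs) by smaller subgraph counts, themselves controllable from the assumption $e(G)\ge Cn^{1+\alpha}$. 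This is the direct generalisation of Janzer's path-versus-walk argument in Theorem~\ref{subbipartite} from length-$2$ legs to length-$k$ legs, and \textbf{this is the main obstacle} in the proof.

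Writing $\pi(v,\vec e)$ for the resulting number of internally-disjoint length-$k$ $s$-spiders from $v$ to $\vec e$, one obtains $\sum_{uv\in E}\sum_{\vec e}\pi(v,\vec e)\ge c_{1}n^{s+1}$. Averaging over the $n^{s+1}$ pairs $(u,\vec e)$, combined with a dyadic refinement to isolate a scale on which the spider-counts concentrate, produces a pair $(u^{\ast},\vec e^{\ast})$ together with a set $B\subseteq N(u^{\ast})$ of at least $Ct$ hubs each admitting an internally-disjoint spider to $\vec e^{\ast}$. A greedy extraction then selects $t$ hubs $v^{(1)},\dots,v^{(t)}\in B$ with mutually vertex-disjoint spiders: at each step the already-used $O(t\cdot sk)$ interior vertices block only a small fraction of the remaining candidates' spider-counts (since the dyadic scale guarantees each candidate has many valid spiders), so the extraction succeeds. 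Together with the $t$ edges $u^{\ast}v^{(\ell)}$, this yields the desired copy of $L_{s,t}(k)$.
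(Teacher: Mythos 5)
Your high-level strategy (count embeddings of the tree $L_{s,1}(k)$, discard the ``collision-prone'' ones, then extract $t$ root-disjoint extensions of a fixed root set) matches the paper's, but the proposal defers exactly the step where all the difficulty lives, and the way you propose to resolve it would not work. You say the bad walks/collisions can be ``bounded by smaller subgraph counts, themselves controllable from the assumption $e(G)\ge Cn^{1+\alpha}$.'' The density assumption alone cannot control collisions: the obstruction is a pair of vertices joined by abnormally many length-$\ell$ paths, and no amount of edge counting rules this out. One must instead use $H$-freeness, i.e.\ show that such a ``heavy'' pair forces a copy of $L_{s,t}(k)$. For $k=2$ (Janzer's Theorem~\ref{subbipartite}) a heavy pair is just two vertices with many common neighbours and yields the forbidden structure almost immediately; for general $k$ this is far from true. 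The paper's solution is a recursive classification of paths of each length $2\le\ell\le k$ as admissible/good (Lemma~\ref{manydisjoint} and the definition of $f(\ell,L)$), together with the key unnamed lemma before Corollary~\ref{admissiblenotgood}: a vertex with many $\ell$-bad pairs in $N(v)\times\Gamma_{\ell-1}(v)$ forces $H$, which is proved by writing $k=j\ell+i$, chaining $\ell$-bad pairs alternately between two sets $X_{\mathrm{final}}$ and $U$, and patching the remainder $i$ with $(i-1)$-subdivided stars from Lemma~\ref{findstructure} (with a parity case distinction on $j$). Your proposal contains no analogue of this construction, and it is the main content of the proof.

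Two further points. First, your final extraction is also not sound as stated: after the averaging you only know each of the $\ge Ct$ hubs admits \emph{some} spider to $\vec e^{\ast}$, and a hub with a unique spider is killed whenever any of the $O(tsk)$ already-used interior vertices lies on it; a single used vertex can lie on the unique spiders of almost all remaining hubs, so the greedy step can fail. The paper avoids this by exploiting the bounded multiplicity of good paths: if no $t$ disjoint extensions exist, some non-root vertex is reused $\omega(1)$ times, and goodness of the subpaths through it caps the number of extensions at $O(1)$, a contradiction. Second, an iterative minimum-degree cleanup does not suffice; the argument repeatedly bounds the number of extensions through a fixed vertex by powers of the \emph{maximum} degree, so you need the almost-regular reduction of Lemma~\ref{lemmaJS}, not just a min-degree subgraph.
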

\begin{figure}
\centering
\begin{tikzpicture}[level 2/.style={sibling distance=6.5mm}]
\node [circle,draw,minimum size] (y){$y$}
child [grow=140,level distance=19mm]{node [rectangle, draw, minimum size](x1){$x_1$}}
child [grow=110, level distance=13mm]{node [rectangle, draw, minimum size](x2){$x_2$}}
child [grow=70,level distance=13mm]{node (td){$\cdots$}}
child [grow=40, level distance=19mm]{node [rectangle, draw, minimum size](xr){$x_{r-1}$}}
child [grow=220,level distance=20mm]{node [circle,draw,minimum size] (z1) {$z_{1}$} [level distance=13mm]
   	child {node [rectangle,draw,scale=.7] (w11) {$w_{1,1}$}}
   	child {node [scale=.7](cd) {$\cdots$}}
   	child {node [rectangle,draw,scale=.7] (w1r) {$w_{1,r-1}$}}
  }
child [grow=250,level distance=13.5mm]{node [circle,draw,minimum size] (z2) {$z_{2}$}[grow=down]
   	child {node [rectangle,draw,scale=.7] (w21) {$w_{2,1}$}}
   	child {node  [scale=.7](cd) {$\dots$}}
   	child {node [rectangle,draw,scale=.7] (w2r) {$w_{2,r-1}$}}
  }
  child [grow=290,level distance=13.5mm]{node (zcd) {$\cdots$}
}
  child [grow=320,level distance=20mm]{node [circle,draw,minimum size] (zs) {$z_{s}$}[level distance=13mm]
   	child {node [rectangle,draw,scale=.7] (ws1) {$w_{s,1}$}}
   	child {node  [scale=.7](cd) {$\cdots$}}
   	child {node [rectangle,draw,scale=.7] (wsr) {$w_{s,r-1}$}}
  }
;
\end{tikzpicture}\hspace{5mm}
\begin{tikzpicture}[level/.style={sibling distance=30mm/#1}, level distance=10mm]
\node [rectangle,draw,minimum size] (u){$u$}
child {node [circle,draw] (v) {$v$}
  child {node [circle,draw,inner sep=0pt] (w11) {$w_{1,1}$}
    child {node [circle,draw,inner sep=0pt] (w21) {$w_{2,1}$}
      child {node {$\vdots$}
        	child {node [rectangle,draw] (wk1) {$w_{k,1}$}}
      } 
    }
  }
  child {node [circle,draw,inner sep=0pt] (w12) {$w_{1,2}$}
    child {node [circle,draw,inner sep=0pt] (w22) {$w_{2,2}$}
      child {node {$\vdots$}
        child {node [rectangle,draw] (wk2) {$w_{k,2}$}}
      } 
    }
  }
  child {node (wcd){$\cdots$}
  }
  child {node [circle,draw,inner sep=0pt] (w1s) {$w_{1,s}$}
    child {node [circle,draw,inner sep=0pt] (w2s) {$w_{2,s}$}
      child {node {$\vdots$}
        child {node [rectangle,draw] (wks) {$w_{k,s}$}}
      } 
    }
  }
}
;
\path (w22) -- (w2s) node [midway] {$\cdots$};
\path (wk2) -- (wks) node [midway] {$\cdots$};
\end{tikzpicture}
\caption{$H_{s,1}(r)$ and $L_{s,1}(k)$} \label{fig:trees}
\end{figure}
This result has several interesting corollaries. The first is a complete resolution of Problem 5.2 from~\cite{KKL18}.

\begin{corollary} \label{longersubcor}
	For any integers $s, k \geq 1$, there exists some $t_0=t_0(s,k)$ such that if $t\geq t_0$, then $\ex(n,L_{s,t}(k))=\Theta(n^{1+\frac{s}{sk+1}})$. In particular, the exponent $1+1/k$ is a limit point of the set of realisable numbers.
\end{corollary}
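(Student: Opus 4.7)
The upper bound $\ex(n, L_{s,t}(k)) = O(n^{1 + s/(sk+1)})$ is immediate from Theorem~\ref{longermain}, so the task reduces to producing a matching lower bound for $t$ sufficiently large, from which the limit point claim then follows.

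My plan is to invoke the random algebraic machinery of Bukh and Conlon~\cite{BC17}. Their main theorem asserts that if $F$ is a rooted graph with root set $R$ whose root density $\rho(F,R) := e(F)/(v(F)-|R|)$ is \emph{balanced}, in the sense that $\rho(F',R) \le \rho(F,R)$ for every subgraph $F' \subseteq F$ containing $R$ together with at least one additional vertex, then for every sufficiently large $t$ the rooted $t$-blowup $F_t$ satisfies $\ex(n,F_t) = \Omega(n^{2-1/\rho})$. Applied to $F = L_{s,1}(k)$ with root set $R = \{u, w_{k,1}, \ldots, w_{k,s}\}$, a direct count gives $|V(F)\setminus R| = 1 + s(k-1)$ and $e(F) = 1 + sk$, so $\rho = (sk+1)/(s(k-1)+1)$, and hence $2 - 1/\rho = 1 + s/(sk+1)$, exactly matching the target exponent.

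The only step requiring work is verifying that $L_{s,1}(k)$ is balanced. Any rooted subgraph $F' \supseteq R$ is determined by its set $A$ of non-root vertices, so the verification splits according to whether $v \in A$ and, for each branch $j$, according to the chosen subset $S_j \subseteq \{w_{1,j}, \ldots, w_{k-1,j}\}$. A short case analysis shows that when $v \in A$, each branch contributes at most $|S_j| + 1$ edges to $F'$, with equality only when $S_j$ is the entire branch; otherwise (either $v \notin A$, or $S_j$ is strictly missing an internal vertex) it contributes at most $|S_j|$. Writing $p$ for the number of full branches and $q$ for the total number of non-root vertices in the non-full branches in the case $v \in A$, the density of $F'$ is $(1+pk+q)/(1+p(k-1)+q)$, which is increasing in $p$ and decreasing in $q$, and is therefore maximised at $p = s$, $q = 0$, giving precisely $\rho$. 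The case $v \notin A$ trivially yields density at most $1 < \rho$.

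Combining balance with the Bukh--Conlon bound gives $\ex(n,L_{s,t}(k)) = \Omega(n^{1+s/(sk+1)})$ for $t \ge t_0(s,k)$, and together with Theorem~\ref{longermain} this establishes the $\Theta$ statement. The limit point claim is then immediate, since the exponents $1 + s/(sk+1) = 1 + 1/(k + 1/s)$ form a strictly increasing sequence of realisable numbers indexed by $s \ge 1$, converging to $1 + 1/k$ as $s \to \infty$. The only genuine obstacle is the balance verification, which reduces, as sketched, to a short arithmetic argument.
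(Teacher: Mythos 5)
Your proposal is correct and follows essentially the same route as the paper: the upper bound is Theorem~\ref{longermain}, and the lower bound comes from applying the Bukh--Conlon random algebraic construction (Lemma~\ref{lemmaBC}) to the rooted tree $L_{s,1}(k)$ after checking it is balanced with $\rho = \frac{sk+1}{s(k-1)+1}$, so that $2-1/\rho = 1+\frac{s}{sk+1}$. The only difference is that you spell out the balance verification, which the paper asserts without proof; your case analysis and monotonicity argument for $(1+pk+q)/(1+p(k-1)+q)$ are sound.
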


Recall that $H^k$ denotes the $k$-subdivision of the graph $H$. Building on work of Kostochka and Pyber \cite{KP88} and Jiang \cite{J11}, Jiang and Seiver~\cite{JS12} gave an upper bound for the extremal number of the $k$-subdivision of a graph.

\begin{theorem}[Jiang--Seiver] \label{theoremJS}
	Let $k \geq 2$ be an even integer and let $H$ be a graph. Then $\ex(n,H^{k-1})=O(n^{1+16/k})$.
\end{theorem}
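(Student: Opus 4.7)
The plan is to follow the classical Kostochka--Pyber--Jiang scheme: regularize to get a large minimum degree, prove a path-finding lemma, and embed $H^{k-1}$ greedily. I may assume $k\ge 18$, since for $k\le 16$ we have $16/k\ge 1$ and the bound $n^{1+16/k}\ge n^2$ is vacuous.

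\emph{Step 1 (Regularization).} Given a host graph $G$ with $e(G)\ge Cn^{1+16/k}$ for $C$ sufficiently large, iteratively delete vertices of degree below half the current average degree to obtain a subgraph $G^*\subseteq G$ on $n^*\le n$ vertices with $\d(G^*)\ge c\,(n^*)^{16/k}$ for some constant $c=c(C)>0$. This is standard, using that average degree does not decrease and that $n^*$ can only shrink.

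\emph{Step 2 (Path lemma).} Set $C_0:=|V(H)|+k|E(H)|$, a constant depending only on $H$ and $k$. The key lemma to establish is: for any two distinct $u,v\in V(G^*)$ and any $S\subseteq V(G^*)\setminus\{u,v\}$ with $|S|\le C_0$, there is a path of length exactly $k$ from $u$ to $v$ in $G^*\setminus S$. I would prove this by a meeting-in-the-middle BFS. Since $\d(G^*\setminus S)\ge c(n^*)^{16/k}-C_0$ and $k\ge 18$, each new BFS layer multiplies in size by at least $(n^*)^{16/k}/2$ until it saturates, so for some $0\le i\le k/2$ the balls $B_i(u)$ and $B_{k-i}(v)$ in $G^*\setminus S$ must intersect. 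Any $w$ in the intersection yields a $u$-$v$ walk of length exactly $k$, the parity being handled by $k$ being even together with two-step detours (available since $\d(G^*)\ge 2$) to fill up any shortfall. Finally, convert this walk into a simple path by suppressing any repeated vertices and re-padding the resulting shortfall by further detours, at the cost of only a bounded number of additional vertices that can be absorbed into $S$.

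\emph{Step 3 (Greedy embedding and main obstacle).} With the path lemma in hand, embed the branch vertices of $H$ as arbitrary distinct vertices of $G^*$; then process the edges of $H$ one at a time and, for each edge, apply the path lemma to the two embedded endpoints with $S$ the set of all previously used vertices to produce a path of length exactly $k$ internally disjoint from everything placed so far. Since $|V(H)|+k|E(H)|\le C_0$, the lemma applies at every step, and the union of the paths and branch vertices is the desired copy of $H^{k-1}$. The main obstacle is the path lemma itself, and in particular the passage from a walk of length exactly $k$ to a \emph{simple} path of that length: one must simultaneously control parity and vertex repetitions. Parity is handled by $k$ being even and two-step detours, but the repetition control is exactly where the exponent $16/k$---rather than the presumably optimal $2/k$---enters, the surplus expansion giving enough slack to prune a non-simple walk and re-pad while still avoiding the bounded set $S$. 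Making this pruning quantitative is where the bulk of the technical work lies.
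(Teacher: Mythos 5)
First, note that the paper does not actually prove this statement: Theorem~\ref{theoremJS} is quoted from Jiang--Seiver~\cite{JS12} as background, and the paper then supersedes it by a completely different route, namely the observation that $H^{k-1}=(H^1)^{k/2-1}$ with $H^1$ bipartite, so that Theorem~\ref{bipartitesubdivisions} (built on the machinery of Section~\ref{sectionlongersub}) yields the stronger bound $O(n^{1+2/k-\d})$. Your proposal instead tries to reconstruct the original Kostochka--Pyber--Jiang--Seiver argument, so the comparison is really with that literature rather than with anything in this paper.

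There is a genuine gap at the heart of Step 2. The claim that ``each new BFS layer multiplies in size by at least $(n^*)^{16/k}/2$ until it saturates'' is false: minimum degree does not imply vertex expansion. A graph of minimum degree $d$ can have second neighbourhoods no larger than first neighbourhoods (locally clique-like or blow-up-like structure), so the balls $B_i(u)$ need not grow at all beyond size roughly $d$, and the meeting-in-the-middle argument collapses. Handling exactly this dichotomy --- either the neighbourhoods expand, or consecutive BFS layers span a dense structure in which one can embed the subdivision directly --- is the entire technical content of Kostochka--Pyber, Jiang, and Jiang--Seiver, and it is where the constant $16$ actually comes from; in your sketch the $16$ does no work (under your expansion hypothesis an exponent near $2/k$ would already suffice, which should have been a warning sign). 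A secondary but real issue is the walk-to-path conversion: suppressing repetitions in a closed-up walk and ``re-padding by two-step detours'' while avoiding the forbidden set $S$ and preserving the exact length $k$ is not routine and is not justified. As written, the proposal identifies the right high-level scheme but omits the one step that makes the theorem nontrivial. If your goal is only to establish the statement as used in this paper, the shortest correct route is the paper's own: subdivide once to reduce to bipartite $H$ and invoke Theorem~\ref{bipartitesubdivisions}.
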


Conlon and Lee~\cite{CL18} have conjectured that the following strengthening should hold.

\begin{conjecture}[Conlon--Lee]
	Let $k \geq 2$ be an even integer and let $H$ be a graph. Then there exists some $\d>0$ such that $\ex(n,H^{k-1})=O(n^{1+1/k-\d})$.
\end{conjecture}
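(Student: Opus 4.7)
The key observation is that for even $k$, the graph $H^{k-1}$ is bipartite: every cycle of $H$ of length $\ell$ becomes a cycle of length $k\ell$ in $H^{k-1}$, which is even. Thus the conjecture concerns the extremal number of a specific family of bipartite graphs, and the real difficulty lies in the case where $H$ itself is non-bipartite.

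For bipartite $H$, as the paper notes, the conjecture follows from Theorem~\ref{longermain}: if $m=|V(H)|$, then $H\subseteq K_{m,m}$, so $H^{k-1}\subseteq K_{m,m}^{k-1}\subseteq L_{m,m}(k)$, yielding $\ex(n,H^{k-1})=O(n^{1+1/k-\delta})$ with $\delta=1/(k(mk+1))$. So the remaining case is $H$ non-bipartite and $k$ even. By the monotonicity $H^{k-1}\subseteq K_m^{k-1}$ for $m=|V(H)|$, it suffices to prove $\ex(n,K_m^{k-1})=O(n^{1+1/k-\delta})$ for some $\delta=\delta(m,k)>0$. Although $K_m^{k-1}$ is bipartite, it does \emph{not} embed as a subgraph in any $L_{s,t}(k)$: such an embedding would require sending $V(K_m)$ across the two parts of $K_{s,t}$ so that every edge of $K_m$ becomes bichromatic, which is impossible. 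Hence Theorem~\ref{longermain} cannot be invoked directly, and a new approach is needed.

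My plan is to adapt the proof techniques of Theorem~\ref{longermain} to non-bipartite base graphs. Given a $K_m^{k-1}$-free graph $G$ with $|E(G)|\geq Cn^{1+1/k-\delta}$, I would first use convexity and path-counting to locate a ``pseudo-edge'' graph $G'$ on $V(G)$, where $uv$ is a pseudo-edge whenever $G$ contains at least $T$ internally vertex-disjoint paths of length $k$ from $u$ to $v$. Second, I would show $G'$ is dense enough to contain a copy of $K_m$, using a supersaturation or dependent-random-choice argument reminiscent of Section~\ref{sectionKKL}. Finally, I would lift the $K_m$ in $G'$ to a copy of $K_m^{k-1}$ in $G$ by selecting mutually internally vertex-disjoint length-$k$ paths realizing the $\binom{m}{2}$ pseudo-edges simultaneously.

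The main obstacle is this last step. For the star-like graph $L_{s,t}(k)$, the subdivision paths share a common ``hub'' structure, so a greedy selection of internally disjoint paths suffices. For $K_m^{k-1}$, the $\binom{m}{2}$ paths must be mutually disjoint across a non-tree structure, introducing cross-constraints that the existing bipartite arguments do not handle: removing vertices to clear one pair of paths may ruin many others. A plausible resolution is to iterate a dependent-random-choice step, progressively sparsifying the host graph and ensuring ``clean'' hub configurations where each of the $\binom{m}{2}$ pairs retains many disjoint paths; alternatively, one might attempt a hypergraph container style argument to enforce disjointness probabilistically, or use a supersaturation result establishing that dense graphs contain \emph{many} copies of $K_m^{k-1}$ from which a disjoint configuration can be extracted by averaging. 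Each of these routes requires a genuinely new ingredient beyond the bipartite machinery used for Theorem~\ref{longermain}, which is why the conjecture remains open in this generality.
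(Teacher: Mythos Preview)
The statement is a \emph{conjecture}, and the paper does not prove it. What the paper actually establishes is (a) the bipartite case, via Theorem~\ref{bipartitesubdivisions}, exactly as you describe, and (b) a weaker bound $\ex(n,H^{k-1})=O(n^{1+2/k-\delta})$ for arbitrary $H$, obtained from the one-line identity $H^{k-1}=(H^1)^{k/2-1}$ together with the bipartite case applied to $H^1$. You did not mention this trick; it does not resolve the conjecture, but it is the paper's only contribution in the non-bipartite direction and is worth knowing.

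Your analysis of the obstruction is accurate: the reduction to $K_m^{k-1}$ is correct, and $K_m^{k-1}$ (for $m\ge 3$) indeed does not embed in any $L_{s,t}(k)$, so Theorem~\ref{longermain} cannot be invoked directly. Your proposed three-step plan (build a pseudo-edge graph, find a $K_m$ in it, lift to $K_m^{k-1}$) is a natural outline, and you have correctly identified the genuine difficulty in the lifting step: the tree structure of $L_{s,1}(k)$ is what makes the greedy disjointness argument in Section~\ref{sectionlongersub} work, and this breaks for $K_m$. None of the fixes you sketch (iterated dependent random choice, containers, supersaturation-plus-averaging) is known to succeed here, and your concluding sentence that ``the conjecture remains open in this generality'' is exactly right. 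In short, your proposal is not a proof and does not claim to be one; as a diagnosis of where the problem lies and why the paper's methods stop short, it is sound.
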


Our Theorem \ref{longermain} establishes this conjecture for bipartite $H$.

\begin{theorem} \label{bipartitesubdivisions}
	For any integers $s,t,k \geq 1$, $\ex(n,K_{s,t}^{k-1})=O(n^{1+\frac{s}{sk+1}})$. In particular, for any bipartite graph $H$, there exists $\delta > 0$ such that $\ex(n,H^{k-1})=O(n^{1+1/k-\d})$.
\end{theorem}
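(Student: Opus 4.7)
The plan is to derive Theorem \ref{bipartitesubdivisions} from Theorem \ref{longermain} by two simple subgraph containment arguments. For the first statement, I would begin by observing that $K_{s,t}^{k-1}$ is itself a subgraph of $L_{s,t}(k)$. Inspecting the definitions, $L_{s,t}(k)$ is obtained from the $(k-1)$-subdivision of $K_{s,t}$ by adding a single extra vertex $u$ adjacent to each of the $t$ ``centres'' (the copies of $v$ in the blowup); deleting $u$ from $L_{s,t}(k)$ recovers exactly $K_{s,t}^{k-1}$. Consequently, any graph containing $L_{s,t}(k)$ also contains $K_{s,t}^{k-1}$, or equivalently, any $K_{s,t}^{k-1}$-free graph is automatically $L_{s,t}(k)$-free. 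Therefore $\ex(n, K_{s,t}^{k-1}) \leq \ex(n, L_{s,t}(k))$, and Theorem \ref{longermain} supplies the desired bound $O(n^{1 + \frac{s}{sk+1}})$.

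For the second statement, given an arbitrary bipartite graph $H$, I would take $s$ to be the size of the larger part of $H$ (so that $H$ embeds as a subgraph of $K_{s,s}$). Since subdivision is monotone under taking subgraphs --- one can simply restrict to the subdivided paths corresponding to the edges of the embedded copy of $H$ --- we have $H^{k-1} \subseteq K_{s,s}^{k-1}$. Combined with the first part, this gives $\ex(n, H^{k-1}) \leq \ex(n, K_{s,s}^{k-1}) = O(n^{1 + \frac{s}{sk+1}})$. A small algebraic manipulation gives $\frac{s}{sk+1} = \frac{1}{k} - \frac{1}{k(sk+1)}$, so setting $\delta = \frac{1}{k(sk+1)} > 0$ yields $\ex(n, H^{k-1}) = O(n^{1 + 1/k - \delta})$ as required.

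There is no real obstacle here; essentially all of the work has already been done in Theorem \ref{longermain}. The content of Theorem \ref{bipartitesubdivisions} is then the observation that $L_{s,t}(k)$ was designed precisely so that it contains $K_{s,t}^{k-1}$, which in turn is a universal host for $(k-1)$-subdivisions of bipartite graphs. The only point worth noting in a write-up is the trivial but genuinely used fact that $H \subseteq F$ implies $H^{k-1} \subseteq F^{k-1}$, which follows immediately from the definition of subdivision.
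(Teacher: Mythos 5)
Your proposal is correct and coincides with the paper's argument: the paper's entire proof is the observation that $K_{s,t}^{k-1}$ is a subgraph of $L_{s,t}(k)$, and your derivation of the ``in particular'' clause (embedding $H$ into $K_{s,s}$, using monotonicity of subdivision, and computing $\delta = \frac{1}{k(sk+1)}$) is exactly the routine verification the paper leaves implicit.
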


\begin{proof2}
	$K_{s,t}^{k-1}$ is a subgraph of $L_{s,t}(k)$.
\end{proof2}

\medskip

This is nearly tight, as the next proposition shows.

\begin{proposition} \label{proplower}
	For any integers $s,k \geq 1$, there exists some $t_0=t_0(s,k)$ such that if $t\geq t_0$, then $\ex(n,K_{s,t}^{k-1})=\Omega(n^{1+\frac{s-1}{sk}})$.
\end{proposition}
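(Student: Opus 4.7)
The plan is to realise $K_{s,t}^{k-1}$ as a rooted blow-up of a suitable spider and to apply the random algebraic construction of Bukh and Conlon~\cite{BC17}, in exact analogy with the construction that establishes the lower bound in Corollary~\ref{longersubcor}. Let $S_{s,k}$ denote the tree obtained from $s$ internally disjoint paths of length $k$ by identifying one endpoint of each; equivalently, $S_{s,k} = K_{s,1}^{k-1}$. Declaring the $s$ leaves of $S_{s,k}$ to be the root set $R$, I observe that $K_{s,t}^{k-1}$ is exactly the rooted $t$-blow-up of $(S_{s,k},R)$: taking $t$ vertex-disjoint copies of $S_{s,k}$ and identifying the $s$ leaves across all copies produces a graph in which the $s$ shared leaves are joined by internally disjoint paths of length $k$ to each of the $t$ centres, which is precisely $K_{s,t}^{k-1}$.

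Next I would compute the rooted density of $(S_{s,k},R)$. Since $v(S_{s,k}) = sk+1$, $|R| = s$, and $e(S_{s,k}) = sk$, one obtains $\rho := e(S_{s,k}) / (v(S_{s,k}) - |R|) = sk/(sk-s+1)$. A short check shows that $(S_{s,k},R)$ is balanced in the sense required by Bukh-Conlon. Any rooted sub-tree obtained by retaining a non-empty subset $R' \subseteq R$ of $s' := |R'|$ leaves together with their paths to the centre is a smaller spider of rooted density $s'k/(s'k - s' + 1)$, and the function $x \mapsto xk/(xk - x + 1)$ is strictly increasing in $x$, so this quantity is maximised at $s' = s$. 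Rooted subgraphs that omit the centre altogether are disjoint unions of paths, each of rooted density at most $1 < \rho$, so they too do not violate balancedness.

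Finally, I would invoke the Bukh-Conlon random algebraic construction, which guarantees that for every balanced rooted tree $T$ of rooted density $\rho$ there exists $t_0 = t_0(T)$ such that $\ex(n, T^{(t)}) = \Omega(n^{2 - 1/\rho})$ for all $t \geq t_0$, where $T^{(t)}$ denotes the $t$-blow-up of $T$. Specialising to $T = S_{s,k}$ and using the identification of the first paragraph, this yields
\[
\ex(n, K_{s,t}^{k-1}) = \Omega\!\left(n^{2 - (sk - s + 1)/sk}\right) = \Omega\!\left(n^{1 + (s-1)/(sk)}\right)
\]
for every $t \geq t_0(s,k)$, which is the desired bound. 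The substantive part of the argument is thus the structural observation that $K_{s,t}^{k-1}$ is a rooted blow-up of a spider; the balancedness check is the monotonicity calculation above, and everything else is a black-box invocation of a result that is essentially already used in the paper to supply the lower bound of Corollary~\ref{longersubcor}. The main potential obstacle lies in quoting Bukh-Conlon in precisely the form needed, since their headline theorem is stated for finite families, but for rooted blow-ups of balanced trees their argument gives a single-graph bound directly.
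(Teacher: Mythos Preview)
Your approach is exactly the paper's: it observes that $K_{s,t}^{k-1}$ is the rooted $t$-blowup of the spider $K_{s,1}^{k-1}$ with its $s$ leaves as roots, records that this rooted tree is balanced and bipartite with $\rho = sk/(sk-s+1)$, and applies Lemma~\ref{lemmaBC} directly (so your worry about the form of the Bukh--Conlon result is unfounded, as that lemma is precisely the single-graph statement you need). One caveat: your balancedness sketch is garbled---for subsets $S$ omitting the centre you claim ``rooted density at most $1 < \rho$'', which would \emph{violate} the inequality $\rho_F(S)\ge\rho(F)$ rather than confirm it, and your ``smaller spider'' computation does not correspond to $\rho_F(S)$ for any $S\subseteq V(F)\setminus R$; the tree is indeed balanced (the paper simply asserts this), but the correct check is that every non-root vertex has degree $2$ except the centre, so $e_S\ge |S|+1$ whenever the centre is absent, and a short case analysis when the centre is present.
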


\smallskip

Even for subdivisions of general graphs, we obtain a large improvement on Theorem \ref{theoremJS}.

\begin{theorem}
	Let $k\geq 2$ be an even integer and let $H$ be a graph. Then there exists some $\d>0$ such that $\ex(n,H^{k-1})=O(n^{1+2/k-\d})$.
\end{theorem}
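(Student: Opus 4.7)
The plan is to deduce this bound from Theorem \ref{bipartitesubdivisions} by a one-line reduction that exploits the parity of $k$. Writing $k=2m$ with $m\geq 1$, the key observation I would make is the isomorphism
\[
H^{k-1} \;\cong\; (H')^{m-1},
\]
where $H'$ denotes the $1$-subdivision of $H$. To see this, note that replacing each edge of $H$ by a path of length $k=2m$ can be carried out in two stages: first subdivide each edge of $H$ once (yielding $H'$, in which every edge of $H$ becomes a path of length $2$), then subdivide each edge of $H'$ by inserting $m-1$ further vertices (yielding $(H')^{m-1}$, in which every edge of $H'$ becomes a path of length $m$). Composing, each edge of $H$ turns into a path of length $2m=k$, so the two constructions produce isomorphic graphs.

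Next I would apply Theorem \ref{bipartitesubdivisions} to the graph $H'$ with subdivision parameter $m$. This step is legitimate because $H'$ is automatically bipartite: its two colour classes are $V(H)$ and the $|E(H)|$ newly inserted subdivision vertices. The theorem then yields some $\delta>0$ such that
\[
\ex(n,(H')^{m-1}) = O\!\left(n^{1+1/m-\delta}\right).
\]
Since $1/m=2/k$ and $H^{k-1}\cong (H')^{m-1}$, this is exactly the claimed bound $\ex(n,H^{k-1})=O(n^{1+2/k-\delta})$.

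There is essentially no genuine obstacle beyond spotting this reduction. The evenness hypothesis on $k$ is precisely what allows one ``layer'' of subdivision to be split off as a $1$-subdivision, which is automatically bipartite and thereby brings the problem into the range of the stronger bipartite bound of Theorem \ref{bipartitesubdivisions}. The price paid for this trick is exactly the factor of $2$ in the exponent, since one must apply the bipartite bound at half the length $m=k/2$ rather than at the full length $k$; any improvement matching the bipartite exponent $1+1/k$ for general $H$ (and any extension to odd $k$) would presumably require a fundamentally different argument that does not route through a bipartite intermediate graph.
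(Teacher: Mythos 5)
Your argument is correct and is exactly the paper's proof: the paper also writes $H^{k-1}=(H^{1})^{k/2-1}$, notes that $H^{1}=H'$ is bipartite, and applies Theorem \ref{bipartitesubdivisions} with subdivision parameter $k/2$. The identification of the composed subdivisions and the use of the evenness of $k$ are carried out just as in the paper.
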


\begin{proof2}
	We have $H^{k-1}=(H^1)^{k/2-1}$. But $H^1$ is bipartite, so Theorem \ref{bipartitesubdivisions} applies.
\end{proof2}

\medskip



The rest of the paper is organised as follows. In Section \ref{sectionprelimturan}, we present some preliminary lemmas that will be used in the proofs. Then, in Section \ref{sectionmaxdegr}, we prove Theorem \ref{maxdegr}. We prove Theorem \ref{subbipnew} and Corollary \ref{subbipnewcor} in Section \ref{sectionsubbipnew}. In Section \ref{sectionKKL}, we give our new proof of Theorem~\ref{theoremKKL}, while Section \ref{sectionlongersub} contains the proofs of Theorem \ref{longermain}, Corollary \ref{longersubcor} and Proposition~\ref{proplower}. We conclude with some further remarks and questions.

\section{Preliminaries} \label{sectionprelimturan}

A common feature of our proofs is that we first assume our host graph is sufficiently regular. Let us say that a graph $G$ is $\emph{K-almost-regular}$ if $\max_{v\in V(G)}\deg(v)\leq K\min_{v\in V(G)}\deg(v)$. The reason why we may assume that our graph is almost regular is the following result of Jiang and Seiver~\cite{JS12}, which is a slight modification of a much earlier result of Erd\H os and Simonovits \cite{ES70}.

\begin{lemma}[Jiang--Seiver] \label{lemmaJS}
	Let $\e,c$ be positive reals, where $\e<1$ and $c\geq 1$. Let $n$ be a positive integer that is sufficiently large as a function of $\e$. Let $G$ be a graph on $n$ vertices with $e(G)\geq cn^{1+\e}$. Then $G$ contains a $K$-almost regular subgraph $G_{\mathrm{reg}}$ on $m\geq n^{\frac{\e-\e^2}{2+2\e}}$ vertices such that $e(G_{\mathrm{reg}})\geq \frac{2c}{5}m^{1+\e}$ and $K=20\cdot 2^{\frac{1}{\e^2}+1}$.
\end{lemma}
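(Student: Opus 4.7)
My plan is to follow the classical Erd\H{o}s--Simonovits regularisation strategy: combine dyadic degree bucketing with iterative pruning. First, one performs a cleanup. Writing $d := 2e(G)/n \geq 2cn^\e$ for the average degree, iteratively delete any vertex of current degree less than $d/4$. Each deletion removes at most $d/4$ edges, and at most $n$ vertices can be removed in total, so the total edge loss is at most $nd/4 = e(G)/2$. The resulting graph $G_0$ has at least $e(G)/2$ edges and minimum degree at least $d/4 \geq (c/2)n^\e$, so its normalised density $e(G_0)/|V(G_0)|^{1+\e}$ remains a constant fraction of the original.

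Next, partition $V(G_0)$ into dyadic degree classes $V_j := \{v \in V(G_0) : 2^j \leq \deg_{G_0}(v) < 2^{j+1}\}$, of which at most $\log_2 n$ are nonempty. By pigeonhole applied to $\sum_j \sum_{v \in V_j} \deg_{G_0}(v) = 2 e(G_0)$, some class $V_{j^*}$ accounts for at least a $1/\log n$ fraction of all edge-endpoints. Passing to the subgraph induced by $V_{j^*}$, or to the bipartite subgraph between $V_{j^*}$ and a well-chosen neighbouring class, narrows the ratio between maximum and minimum degree at the cost of a $\log n$ factor loss in the normalised density $\Phi(H) := e(H)/|V(H)|^{1+\e}$. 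Iterating this narrowing step either terminates with an almost-regular subgraph or strictly decreases the vertex count while preserving $\Phi$ up to polylogarithmic factors; because $\Phi(H) \leq |V(H)|^{1-\e}/2$ holds trivially for every simple graph, the process must halt in $O(1/\e)$ rounds.

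The main obstacle is the quantitative bookkeeping. One must show that the combined $\log n$-per-round losses across $O(1/\e)$ iterations still leave a subgraph on $m \geq n^{(\e-\e^2)/(2+2\e)}$ vertices with $e(G_{\mathrm{reg}}) \geq (2c/5) m^{1+\e}$, and simultaneously that at termination the maximum-to-minimum degree ratio is at most $K = 20 \cdot 2^{1/\e^2 + 1}$. The specific exponent $(\e - \e^2)/(2 + 2\e)$ arises by solving the recurrence that tracks how many vertices remain under iterated dyadic narrowing, while the bound on $K$ reflects that allowing the degree ratio to shrink by only a factor of $2$ per round over $O(1/\e^2)$ rounds is what suffices to guarantee the claimed size of $G_{\mathrm{reg}}$. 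The explicit constants $2/5$ and $20$ can be recovered by choosing the cleanup threshold $d/4$ and the pigeonhole slacks with some care.
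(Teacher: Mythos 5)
Your overall strategy---iterative pruning plus degree bucketing---is the right genre, but the specific mechanism you propose (dyadic degree classes with a $1/\log n$ pigeonhole loss per round) cannot deliver the lemma as stated, and the actual proof is organised differently precisely to avoid this. The statement requires $e(G_{\mathrm{reg}})\geq \frac{2c}{5}m^{1+\e}$, i.e.\ the normalised density $\Phi(H)=e(H)/|V(H)|^{1+\e}$ may only drop by the absolute constant factor $2/5$. In your scheme each round costs a factor of order $\log n$ in $\Phi$, and this can only be recouped if the vertex set simultaneously shrinks by a factor of at least $(\log n)^{1/\e}$; but the dominant dyadic class $V_{j^*}$ may contain almost all of the vertices while carrying only a $1/\log n$ fraction of the edges, in which case $\Phi$ genuinely decreases by a $\log n$ factor that is never recovered. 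Your termination argument is also not valid: the trivial bound $\Phi(H)\leq |V(H)|^{1-\e}/2$ together with $\Phi\geq c$ only forces $|V(H)|=\Omega(1)$, so it does not bound the number of rounds by $O(1/\e)$. In arguments of this type the iteration in fact runs for $\Theta_{\e}(\log n)$ rounds, which is exactly why any per-round loss of $\log n$ is fatal to the stated constants.

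The proof given in the paper (written out for the companion version allowing $c<1$; the $c\geq 1$ case is the same with one modification) replaces dyadic degree classes by a partition into $2p$ \emph{equal-sized} parts $B_1,\dots,B_{2p}$ ordered by degree, with $p=2^{1/\e^2+1}$ a constant depending only on $\e$. If at most $\frac{c}{2}n^{1+\e}$ edges meet the top part $B_1$, one deletes $B_1$ and then strips vertices of degree below $\frac{c}{10}n^{\e}$: the minimum degree of the result is $\frac{c}{10}n^{\e}$, while $\Delta(G_{\mathrm{reg}})\cdot\frac{n}{2p}\leq\sum_{x\in B_1}d_G(x)\leq cn^{1+\e}$ caps the maximum degree at $2pcn^{\e}$, giving ratio $20p=K$ and edge count at least $\frac{2c}{5}n^{1+\e}$. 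Otherwise some $B_1\cup B_j$ spans more than $\frac{c}{4p}n^{1+\e}\geq c(n/p)^{1+\e}$ edges (using $p^{\e}\geq 4$), so one recurses on a graph with $p$ times fewer vertices and \emph{no loss at all} in normalised density; the exponent $\frac{\e-\e^2}{2+2\e}$ then comes from bounding the number of recursions via $e(G_k)\leq |V(G_k)|^2$. The essential idea missing from your write-up is this dichotomy with equal-sized, constantly many, degree-ranked buckets, arranged so that the ``zoom in'' branch is lossless.
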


In Section \ref{sectionmaxdegr}, we will need a version of this lemma where $c$ can be smaller than 1.

\begin{lemma}
	Let $\e,c$ be positive reals, where $\e<1$. Let $n$ be a positive integer that is sufficiently large as a function of $\e$ and $c$. Let $G$ be a graph on $n$ vertices with $e(G)\geq cn^{1+\e}$. Then $G$ contains a $K$-almost regular subgraph $G_{\mathrm{reg}}$ on $m\geq n^{\frac{\e-\e^2}{4+4\e}}$ vertices such that $e(G_{\mathrm{reg}})\geq \frac{2c}{5}m^{1+\e}$ and $K=20\cdot 2^{\frac{1}{\e^2}+1}$.
\end{lemma}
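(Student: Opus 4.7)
The plan is to retrace the proof of Lemma \ref{lemmaJS} while tracking the density constant $c$ more carefully, which is now allowed to drop below $1$. Lemma \ref{lemmaJS} is proved by an iterative procedure that, at each step, either declares the current graph $K$-almost-regular or passes to a smaller subgraph by removing vertices of extremal degree. The threshold used in the iteration is proportional to $cm_i^{\varepsilon}$, where $m_i$ is the current vertex count, and the hypothesis $c\geq 1$ is invoked precisely to translate this threshold into the final size bound $m\geq n^{(\varepsilon-\varepsilon^{2})/(2+2\varepsilon)}$ at the end.

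As a useful preliminary reduction, I would first extract a subgraph $G_0\subseteq G$ with $\delta(G_0)\geq e(G)/n\geq cn^{\varepsilon}$ by the standard minimum-degree argument. Writing $n_0=|V(G_0)|$, one then has $e(G_0)\geq c' n_0^{1+\varepsilon}$ with $c'=(c/2)(n/n_0)^{\varepsilon}$, and whenever $n_0\leq (c/2)^{1/\varepsilon}n$ the constant $c'$ is already at least $1$, so Lemma \ref{lemmaJS} applies directly to $G_0$ and yields an almost-regular $G_{\mathrm{reg}}$ with $m\geq n_0^{(\varepsilon-\varepsilon^{2})/(2+2\varepsilon)}$ vertices and $e(G_{\mathrm{reg}})\geq (2c'/5)m^{1+\varepsilon}\geq (2c/5)m^{1+\varepsilon}$ (the latter using $c'\geq c$ in this regime). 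The target size bound $m\geq n^{(\varepsilon-\varepsilon^{2})/(4+4\varepsilon)}$ reduces to $n_0\geq n^{1/2}$, which accounts for the factor-$2$ loss in the exponent: in many regimes (notably when $\varepsilon>1/2$ and $n$ is large in terms of $c$ and $\varepsilon$) the trivial bound $n_0\geq\delta(G_0)+1\geq cn^{\varepsilon}$ already delivers this.

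The main obstacle is handling the remaining cases where $n_0$ falls outside the interval $[n^{1/2},(c/2)^{1/\varepsilon}n]$: for large $n_0$, $c'<1$ and the black-box appeal to Lemma \ref{lemmaJS} is not immediate; for small $n_0$, the conclusion of Lemma \ref{lemmaJS} applied to $G_0$ may give too few vertices. In both cases the remedy is to reopen the iterative proof of Lemma \ref{lemmaJS} and redo the multiplicative estimates with the extra factor of $c$ carried along throughout, which verifies both that the right loss in the size bound is a factor of $2$ in the exponent, and that the constants $K=20\cdot 2^{1/\varepsilon^{2}+1}$ and $2c/5$ survive unchanged. No genuinely new ideas beyond those already present in Lemma \ref{lemmaJS} are required; the proof is essentially a careful accounting.
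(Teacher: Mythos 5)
Your overall reading of the situation is right: the proof is the Erd\H{o}s--Simonovits/Jiang--Seiver iteration of Lemma~\ref{lemmaJS} rerun with the constant $c$ carried along, and the only place the hypothesis $c\geq 1$ is used is in converting the bound on the number of type-2 iterations into the final vertex-count bound. However, as written your argument has a genuine gap precisely at the one point that distinguishes this lemma from Lemma~\ref{lemmaJS}. Your preliminary reduction via $G_0$ only covers the regime $n^{1/2}\leq n_0\leq (c/2)^{1/\e}n$, and when $c<1$ the main case is exactly $n_0>(c/2)^{1/\e}n$ (for instance, $G$ itself may already have minimum degree about $cn^{\e}$, so that $G_0=G$); for that case you say only that one should ``reopen the iterative proof and redo the multiplicative estimates,'' and you assert without verification that the loss is a factor of $2$ in the exponent and that the constants $K$ and $\frac{2c}{5}$ survive. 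That assertion is the entire content of the lemma, so it cannot be left as an unexecuted accounting exercise.

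Concretely, the step you are missing is the following. Running the iteration with $p=2^{\frac{1}{\e^2}+1}$, after $k$ type-2 steps one has $|V(G_k)|=n/p^k$ and $e(G_k)\geq \frac{c}{(4p)^k}n^{1+\e}$, and comparing with $e(G_k)\leq |V(G_k)|^2$ gives $(p/4)^k\leq n^{1-\e}/c$. When $c<1$ this is weaker than $n^{1-\e}$, and the fix is to absorb the $1/c$ into a slightly larger power of $n$: for $n$ sufficiently large in terms of $c$ and $\e$ one has $n^{1-\e}/c\leq n^{1-\e+\frac{\e(1-\e)^2}{2(1+\e^2)}}$, and one must then check that with this weaker bound on $k$ the surviving vertex count satisfies $n_k\geq n^{\frac{\e-\e^2}{2+2\e}}$ (half the exponent available in the original lemma), whence $m\geq \frac{2}{5p}n_k\geq n^{\frac{\e-\e^2}{4+4\e}}$. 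This computation, together with rechecking that the type-1 analysis (deletion threshold $\frac{c}{10}n^{\e}$, maximum degree at most $2pcn^{\e}$) still yields $K=20p$ and $e(G_{\mathrm{reg}})\geq \frac{2c}{5}m^{1+\e}$, is what you need to supply; the preliminary passage to $G_0$ is an unnecessary detour that does not avoid it.
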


The proof of this is the same as the proof of Lemma \ref{lemmaJS} with one straightforward modification. Nevertheless, we include it here for completeness. Note that here and throughout logarithms will be understood to be base two.

\begin{proof2}
	For convenience, we will drop all floor and ceiling signs, noting that doing so does not affect the analysis in an essential way. Let $\e,c$ be positive reals, where $\e<1$. Let $n$ be a positive integer sufficiently large as a function of $\e$ and $c$. Let $G$ be a graph on $n$ vertices with
	$e(G) \geq cn^{1+\e}$. Set $p = 2^{\frac{1}{\e^2}+1}$. We partition $V(G)$ into $2p$ almost equal parts
	$B_1,\dots,B_{2p}$, where $B_1$ consists of $\frac{n}{2p}$ vertices of the highest degrees in $G$.
	
	Suppose first that at most $\frac{c}{2}n^{1+\e}$ edges of $G$ are incident to $B_1$. We say that $G$ is of type 1. Let $H = G \setminus B_1$. Then $e(H) \geq \frac{c}{2}n^{1+\e}$. Successively remove vertices of degree less than $\frac{c}{10}n^{\e}$ from $H$ until we get stuck. Denote the remaining subgraph by $G_{\mathrm{reg}}$. Let $m=|V(G_{\mathrm{reg}})|$. Since at most $\frac{c}{10}n^{\e}\cdot n=\frac{c}{10}n^{1+\e}$ edges were removed in the deletion process, we have
	$e(G_{\mathrm{reg}}) \geq \frac{4c}{10}n^{1+\e} \geq \frac{2c}{5}m^{1+\e}$. Moreover, $\d(G_{\mathrm{reg}})\geq \frac{c}{10}n^{\e}$ by construction. Note now that $d_G(x) \geq \Delta(G_{\mathrm{reg}})$ for all $x \in B_1$ and also $\sum_{x\in B_1} d_G(x) \leq cn^{1+\e}$, since at most $\frac{c}{2}n^{1+\e}$ edges of $G$ are incident to $B_1$. Therefore, $\Delta(G_{\mathrm{reg}})(n/2p) \leq \sum_{x\in B_1} d_G(x) \leq
	cn^{1+\e}$, from which we get $\Delta(G_{\mathrm{reg}}) \leq 2pcn^{\e}$. Thus, $\Delta(G_{\mathrm{reg}})/\d(G_{\mathrm{reg}}) \leq 2pcn^{\e}/ \frac{c}{10}n^{\e} = 20p$.
	So $G_{\mathrm{reg}}$ is $K$-almost-regular. Since  
	$$m \geq 2e(G_{\mathrm{reg}})/\Delta(G_{\mathrm{reg}}) \geq \frac{4c}{5}n^{1+\e}/2pcn^{\e} = \frac{2}{5p}n \geq n^{\frac{\e-\e^2}{4+4\e}}$$ 
	for large $n$, the lemma holds in this case.
	
	Suppose now that more than $\frac{c}{2}n^{1+\e}$ edges of $G$ are incident to $B_1$. We say that
	$G$ is of type 2. By averaging, for some $j \in \{2,\dots, 2p\}$, the subgraph $G_1$
	of $G$ induced by $B_1\cup B_j$ has more than $\frac{1}{2p}\frac{c}{2}n^{1+\e}=\frac{c}{4p}n^{1+\e}$ edges. Let $n_1 = |V(G_1)|$. Then $n_1 = n/p$. Note that $cn_1^{1+\e} = c(\frac{n}{p})^{1+\e}=\frac{c}{p}n^{1+\e}\frac{1}{p^{\e}}\leq \frac{c}{4p}n^{1+\e}$, using that $p^{\e}= 2^{(\frac{1}{\e^2}+1)\e}\geq 4$. So $e(G_1) \geq cn_1^{1+\e}$.
	
	We can now replace $G$ with $G_1$ and repeat the analysis. If $G_1$ is of type 1, we terminate. If $G_1$ is of type 2, we define $G_2$ from $G_1$ in the same way we defined $G_1$ from $G$.	We continue like this as long as the new graph $G_i$ is of type 2. We terminate when
	$G_i$ is of type 1 for the first time. With $G_0 = G$, let $k$ be the smallest $i$ such that
	$G_i$ is of type 1. Then $|V(G_k)| = \frac{n}{p^k}$ and $e(G_k) \geq \frac{c}{(4p)^k}n^{1+\e}$. Since $e(G_k) \leq |V(G_k)|^2$, we have $\frac{c}{(4p)^k}n^{1+\e}\leq \frac{n^2}{p^{2k}}$. Thus, $(\frac{p}{4})^k\leq \frac{n^{1-\e}}{c}\leq n^{1-\e+\frac{\e(1-\e)^2}{2(1+\e^2)}}$ as $n$ is sufficiently large, so $k\leq \left(1-\e+\frac{\e(1-\e)^2}{2(1+\e^2)}\right)\frac{\log n}{\log (p/4)}$. Since $n_k :=|V(G_k)| = n/p^k$, $\log n_k = \log n-k\log p\geq \left(1-\left(1-\e+\frac{\e(1-\e)^2}{2(1+\e^2)}\right)\frac{\log p}{\log(p/4)}\right)\log n$. Plugging in $p=2^{\frac{1}{\e^2}+1}$, we get 
	$$\log n_k\geq \left(1-\left(1-\e+\frac{\e(1-\e)^2}{2(1+\e^2)}\right)\frac{\frac{1}{\e^2}+1}{\frac{1}{\e^2}-1}\right)\log n=\frac{\e-\e^2}{2+2\e}\log n$$ 
	and, therefore, $n_k\geq n^{\frac{\e-\e^2}{2+2\e}}.$
	Since $G_k$ is of type 1, our earlier arguments imply that it contains a subgraph $G_{\mathrm{reg}}$ on $m$ vertices, where $m\geq \frac{2}{5p}n_k\geq n^{\frac{\e-\e^2}{4+4\e}}$ for large $n$. Furthermore, $e(G_{\mathrm{reg}})\geq \frac{2c}{5}m^{1+\e}$ and $G_{\mathrm{reg}}$ is $K$-almost-regular, as required.
\end{proof2}

\medskip

We will in fact need a version of this lemma which gives an almost-regular bipartite subgraph. We say that a bipartite graph $G$ with bipartition $A\cup B$ is \emph{balanced} if $\frac{1}{2}|B|\leq |A|\leq 2|B|$. The proof of the following lemma is almost identical to the proof of Lemma 2.3 in \cite{CL18} and is therefore omitted.

\begin{lemma} \label{lemmaJSmodified}
	Let $\e,c$ be positive reals, where $\e<1$. Let $n$ be a positive integer that is sufficiently large as a function of $\e$ and $c$. Let $G$ be a graph on $n$ vertices with $e(G)\geq cn^{1+\e}$. Then $G$ contains a $K$-almost regular balanced bipartite subgraph $G_{\mathrm{bip}}$ on $m\geq n^{\frac{\e-\e^2}{4+4\e}}$ vertices such that $e(G_{\mathrm{bip}})\geq \frac{c}{10}m^{1+\e}$ and $K=60\cdot 2^{\frac{1}{\e^2}+1}$.
\end{lemma}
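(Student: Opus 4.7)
The plan is to combine the preceding (non-bipartite) almost-regular lemma with a standard random-bipartition argument, so that the balanced-bipartite property is obtained at the cost of inflating the almost-regularity constant by only a small factor.

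First, I would apply the preceding lemma to $G$ to obtain a $K'$-almost-regular subgraph $G_\mathrm{reg}$ on $m\geq n^{\frac{\e-\e^2}{4+4\e}}$ vertices with $e(G_\mathrm{reg})\geq \frac{2c}{5}m^{1+\e}$ and $K'=20\cdot 2^{\frac{1}{\e^2}+1}$. Setting $\d=\d(G_\mathrm{reg})$ and $\D=\D(G_\mathrm{reg})$, I would also extract from that proof the quantitative bound $\d\geq \frac{c}{10}m^{\e}$: the deletion procedure there stops only when the minimum degree exceeds $\frac{c}{10}n_k^{\e}$ in the ambient graph $G_k$, and $m\leq n_k$. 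This lower bound on $\d$ is the key input for the concentration step below.

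Next, I would randomly partition $V(G_\mathrm{reg})=A\cup B$ by placing each vertex independently into $A$ or $B$ with probability $1/2$, and let $G_\mathrm{bip}$ be the bipartite subgraph spanned by the edges going across the partition. I would then verify that with positive probability the following three events hold simultaneously: (i) $|A|,|B|\in[m/3,2m/3]$; (ii) $e(G_\mathrm{bip})\geq e(G_\mathrm{reg})/4$; and (iii) every vertex $v$ has cross-degree in $[\tfrac{3}{8}d_{G_\mathrm{reg}}(v),\tfrac{5}{8}d_{G_\mathrm{reg}}(v)]$. Event (i) fails with exponentially small probability by Chernoff applied to $|A|\sim \Bin(m,1/2)$. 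Event (ii) fails with probability $o(1)$ by Chebyshev on the edge-count variable, whose variance is at most $O(\D^{2}m)=O(m^{1+2\e})$ while the square of its mean is of order $m^{2+2\e}$. Event (iii) fails at a fixed $v$ with probability at most $2e^{-\Omega(\d)}=2e^{-\Omega(m^{\e})}$ by Chernoff on the $\Bin(d_{G_\mathrm{reg}}(v),1/2)$ cross-degree, and a union bound over $m$ vertices survives comfortably since $m^{\e}\gg \log m$.

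When all three events hold, $G_\mathrm{bip}$ is balanced by (i), has at least $\frac{1}{4}\cdot\frac{2c}{5}m^{1+\e}=\frac{c}{10}m^{1+\e}$ edges by (ii), and by (iii) its maximum-to-minimum degree ratio is at most $\frac{5/8}{3/8}\cdot\frac{\D}{\d}\leq \tfrac{5}{3}K'\leq 60\cdot 2^{\frac{1}{\e^2}+1}=K$, exactly matching the conclusion of the lemma. The only step requiring any care is extracting the degree lower bound $\d\geq \frac{c}{10}m^{\e}$ from the proof of the preceding lemma; granting that, the remainder is a routine union bound over three well-controlled concentration inequalities.
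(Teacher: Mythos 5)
Your proof is correct and follows the route the paper intends: the paper omits the proof of this lemma, deferring to Lemma~2.3 of Conlon--Lee, whose argument is precisely this scheme of first regularizing and then taking a random bipartition with Chernoff/Chebyshev concentration for the sides, the cross-degrees and the cross-edge count (the factor $3$ in $K=60\cdot 2^{\frac{1}{\e^2}+1}=3K'$ corresponding to the loss in the degree ratio from the bipartition). As a minor remark, the bound $\d(G_{\mathrm{reg}})=\Omega_{\e,c}(m^{\e})$ that you extract from the proof of the preceding lemma also follows directly from its statement, since $e(G_{\mathrm{reg}})\geq \frac{2c}{5}m^{1+\e}$ forces $\Delta(G_{\mathrm{reg}})\geq \frac{4c}{5}m^{\e}$ and $K'$-almost-regularity then gives $\d(G_{\mathrm{reg}})\geq \Delta(G_{\mathrm{reg}})/K'$.
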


The main focus of this paper is on proving upper bounds for extremal numbers. However, in many cases we can use a result of Bukh and Conlon~\cite{BC17} to show that there is a matching lower bound. To say more, suppose that $F$ is a graph with a set of roots $R\subsetneq V(F)$. For any non-empty $S\s V(F)\setminus R$, let $e_S$ be the number of edges in $F$ adjacent to $S$. Set $\rho_F(S)=\frac{e_S}{|S|}$ and $\rho(F)=\rho_F(V(F)\setminus R)$. We say that $(F,R)$ (or $F$ if $R$ is clear) is \emph{balanced} if $\rho(F)\leq \rho_F(S)$ holds for every non-empty $S\s V(F)\setminus R$. Let us write $\ell\ast F$ for the $\ell$-blowup of the rooted graph $F$, as defined in Section \ref{sectionintroturan}. The result of Bukh and Conlon is now as follows.

\begin{lemma}[Bukh--Conlon] \label{lemmaBC}
	Let $F$ be a balanced bipartite rooted graph with $\rho(F)>0$. Then there is some $\ell_0\in \N$ such that, for every $\ell\geq \ell_0$, $\ex(n,\ell\ast F)=\Omega(n^{2-\frac{1}{\rho(F)}})$.
\end{lemma}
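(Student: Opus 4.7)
The plan is to apply the random algebraic method of Bukh--Conlon. First I would fix a large prime power $q$ and an integer $d$ so that $n\asymp q^d$, and set $V=\F_q^d\sqcup \F_q^d$. For a fixed degree $D=D(F)$ and an integer $t$ to be chosen, I would sample independent uniformly random polynomials $P_1,\dots,P_t\in \F_q[\x,\y]$ of degree at most $D$, and define a random bipartite graph $G$ on $V$ in which $\x$ is joined to $\y$ exactly when $P_1(\x,\y)=\cdots=P_t(\x,\y)=0$. Since each potential edge is present with probability $q^{-t}$, we have $\E[e(G)]=q^{2d-t}$. I would then choose the ratio $t/d$ equal to (or slightly above) $1/\rho(F)$, so that this expectation matches $n^{2-1/\rho(F)}$.

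Next I would bound the expected number of copies of $\ell\ast F$ in $G$. Fix any embedding of the root set $R$ into $V$ and reveal the non-root vertices of $F$ one at a time, in an ordering of $W:=V(F)\setminus R$ chosen so that every initial segment $S\subseteq W$ satisfies $\rho_F(S)\geq \rho(F)$; the balanced hypothesis is exactly what guarantees that such an ordering exists, and it is also what prevents any intermediate partial count from exploding. Each new vertex contributes a factor $q^d$ (its possible locations in $V$) and each new edge contributes $q^{-t}$ (the probability that all $P_i$ vanish at it), giving expected completion count at most $q^{d|W|-te_F}=q^{d|W|(1-t\rho(F)/d)}$. Because the $\ell$ copies of $W$ in an $\ell$-blowup are conditionally independent given the roots, the expected number of $\ell\ast F$-copies with fixed root image is at most $q^{\ell(d|W|-te_F)}$. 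Perturbing $t/d$ slightly above $1/\rho(F)$ makes the bracketed exponent strictly negative, and summing over the $q^{d|R|}$ root embeddings keeps the expected total at $o(q^{2d-t})$ once $\ell$ is sufficiently large.

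The proof is then completed by a standard alteration: delete one edge from every copy of $\ell\ast F$ in $G$. With positive probability the remainder is $\ell\ast F$-free and still has $\Omega(q^{2d-t})=\Omega(n^{2-1/\rho(F)})$ edges. The hard part will be rigorously justifying the codimension estimate $\E[\#\text{copies of }F\text{ with fixed roots}]\leq q^{d|W|-te_F}$: one needs that random polynomials of bounded degree impose essentially independent vanishing constraints, a Lang--Weil-type genericity statement about random varieties over $\F_q$. Executing this carefully, together with interpolating between the values $n=2q^d$ to obtain the lower bound for all $n$, is the technical core of the Bukh--Conlon framework.
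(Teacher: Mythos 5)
The paper does not prove this lemma; it is quoted from Bukh and Conlon \cite{BC17}. Your sketch correctly identifies their construction (a random algebraic bipartite graph on $\F_q^d\sqcup\F_q^d$ with edges given by the common zero set of $t$ random bounded-degree polynomials, with $t/d=1/\rho(F)$), but the way you finish the argument — first moment plus deleting an edge from every copy of $\ell\ast F$ — does not work, and the failure is exactly the reason the random algebraic method is needed in the first place. At the critical exponent $t e_F=d|W|$, the expected number of extensions of a fixed root embedding to a copy of $F$ is $\Theta(1)$, so the expected number of copies of $\ell\ast F$ is of order $q^{d|R|}$, which vastly exceeds the edge count $q^{2d-t}$ whenever $|R|\geq 2$ (as it is for all graphs of interest here, e.g.\ $|R|=s$ for $K'_{s,1}$); the alteration would delete essentially every edge. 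Your proposed fix — perturbing $t/d$ to $1/\rho(F)+\e$ — does make the expected copy count small for large $\ell$, but it also reduces the edge count to $n^{2-1/\rho(F)-\e}$, so you no longer obtain the claimed bound $\Omega(n^{2-1/\rho(F)})$. A purely first-moment/alteration argument can never recover the exact exponent; if it could, a binomial random graph would do and no algebraic structure would be required.

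The missing idea is the Lang--Weil dichotomy, which is where the algebraic structure actually earns its keep. For each embedding $\phi$ of the roots, the set $Z_\phi$ of extensions to a copy of $F$ is (essentially) an algebraic variety of bounded complexity, so $|Z_\phi|$ is either at most a constant $C=C(F)$ or at least $cq$ — intermediate sizes are impossible. Balancedness is used to show that $\E\bigl[|Z_\phi|^k\bigr]=O(1)$ for every fixed $k$ (the $k$-th moment counts unions of $k$ partial copies over the same roots, and $\rho_F(S)\geq\rho(F)$ for every $S$ forces each union to contribute a nonpositive exponent), whence $\P\bigl[|Z_\phi|\geq cq\bigr]=O(q^{-k})$ by Markov. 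Summing over the $q^{d|R|}$ root embeddings and taking $k$ large, the expected number of ``bad'' root sets is negligible, and deleting a vertex from each bad set leaves a graph with $\Omega(q^{2d-t})$ edges in which every root embedding extends to at most $C$ copies of $F$; this graph contains no $\ell\ast F$ for $\ell\geq\ell_0:=C+1$. In particular, $\ell_0$ arises as the Lang--Weil constant, not from making a first-moment sum converge in $\ell$. (A smaller issue: the $\ell$ branches of a blow-up are not conditionally independent given the roots, since they share the random polynomials; the expectation computation instead needs the interpolation fact that bounded-degree random polynomials vanish independently on any bounded set of distinct points, which you do flag at the end.)
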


The notation we use in the remaining sections is mostly standard. For a graph $G$ and $v\in V(G)$, we write $N_G(v)$ (or $N(v)$ if $G$ is clear) for the neighbourhood of $v$ in $G$. We also write $d_G(v)$ or $d(v)$ for the degree of $v$. Finally, if $u_1,\dots,u_r\in V(G)$, then we write $d_G(u_1,\dots,u_r)=d(u_1,\dots,u_r)=|N_G(u_1)\cap \dots \cap N_G(u_r)|$.

\section{$C_4$-free bipartite graphs with max degree $r$ on one side} \label{sectionmaxdegr}

In this section, we prove Theorem \ref{maxdegr}. In order to prove this theorem, we may clearly assume that all the degrees in one part of $H$ are \emph{exactly} $r$. Then Lemma \ref{lemmaJSmodified} reduces Theorem \ref{maxdegr} to the following statement.

\begin{theorem} \label{mainreduced}
	Let $r\geq 2$ be an integer, let $K\geq 1$ be fixed and let $H$ be a bipartite graph such that in one of the parts all the degrees are exactly $r$ and $H$ does not contain $C_4$ as a subgraph. Then, for any constant $c > 0$, there exists $n_0$ such that if $n \geq n_0$ and $G$ is a $K$-almost-regular balanced bipartite graph with bipartition $A\cup B$, $|B|=n$, and minimum degree $\d\geq c n^{1-1/r}$, then $G$ contains a copy of $H$.
\end{theorem}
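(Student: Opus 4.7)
The plan is to embed $H$ into $G$ by a two-stage probabilistic argument, extending the counting framework Janzer developed for the $r=2$ case of Theorem~\ref{clsub}. Let the bipartition of $H$ be $X\cup Y$ with every $x\in X$ of degree exactly $r$, and write $s=|X|$, $t=|Y|$. Since $H$ is $C_4$-free, the neighborhood family $\mathcal{F}:=\{N_H(x):x\in X\}$ is an $r$-uniform linear hypergraph on $Y$: any two of its members intersect in at most one vertex.

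The strategy is to find an injection $\phi\colon Y\to A$ such that, for every $x\in X$, the $r$-set $\phi(N_H(x))$ has codegree at least $s+t$ in $B$; given such $\phi$, a greedy extension to $X\to B$ yields a copy of $H$ in $G$, contradicting the assumption that $G$ is $H$-free. I would sample $\phi$ by choosing $\phi(y)\in A$ uniformly and independently for each $y\in Y$. The probability that $\phi$ is not injective is $O(t^2/|A|)=o(1)$, and for each $x\in X$ the image $\phi(N_H(x))$ is uniform in $A^r$. A union bound over $x\in X$ then reduces the problem to a codegree concentration lemma: the fraction of $r$-tuples $(a_1,\dots,a_r)\in A^r$ with $d_G(a_1,\dots,a_r)<s+t$ is $o(1)$ as $n\to\infty$ (for fixed $c$, $K$, $r$, $H$).

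The heart of the proof would be establishing this codegree concentration, proved by contradiction from $H$-freeness of $G$. I would combine the almost-regularity of $G$, the minimum degree bound $\delta\geq cn^{1-1/r}$ (which forces $\sum_{b\in B}d(b)^{\underline r}=\Theta(n^r)$), and the linearity of $\mathcal{F}$ (from $C_4$-freeness of $H$), via a Janzer-style higher-moment double count of structures like $\sum_{\mathbf{a}\in A^r}d_G(\mathbf{a})^k$ for a suitable $k$. The linearity of $\mathcal{F}$ would enter when building up a partial embedding of $H$ inductively: at each step, one vertex of $X$ is embedded, its $r$ neighbors in $Y$ (sharing at most one vertex with previously embedded $X$-vertices' neighborhoods, by $C_4$-freeness) are placed, and high-codegree $r$-tuples of $A$ supply the image.

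The main obstacle is precisely this codegree concentration lemma. A naive count yields only the classical $O(n^{2-1/r})$ bound, and improving this to $o(n^{2-1/r})$ requires an essential use of both the $H$-freeness of $G$ and the linearity of $\mathcal{F}$: without the latter (e.g.\ for $H=K_{r,r}$), $K_{r,r}$-like extremal examples are well known to show that such concentration cannot hold. I anticipate that the generalisation of Janzer's method from $r=2$ to arbitrary $r$, along with the careful tracking of how overlaps in $\mathcal{F}$ propagate through the counting, will be the main technical challenge.
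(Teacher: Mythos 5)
There is a genuine gap, and it sits exactly where you locate the ``heart'' of your argument: the codegree concentration lemma is false. Under the hypotheses, $\sum_{b\in B}\binom{d(b)}{r}\approx n\binom{\d}{r}=\Theta(n^r)$ while the number of $r$-tuples in $A$ is also $\Theta(n^r)$, so the \emph{average} codegree of an $r$-tuple is only a constant depending on $c$ and $K$. In a pseudorandom bipartite graph with these parameters the codegree of a typical $r$-tuple is roughly Poisson with constant mean, so a $1-o(1)$ fraction of $r$-tuples certainly do not have codegree at least $s+t$; indeed for small $c$ almost all $r$-tuples have codegree $0$. Consequently your union bound over $x\in X$ fails: each event ``$\phi(N_H(x))$ has large codegree'' has probability bounded away from $1$ (possibly close to $0$), not $1-o(1)$. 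Even the weaker statement that a constant fraction of $r$-tuples have \emph{positive} codegree does not rescue the independent-sampling scheme, because finding a copy of a fixed linear $r$-uniform hypergraph inside a hypergraph of constant density is not achievable by a union bound; it requires a genuine counting theorem.

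The paper's proof runs in almost the opposite direction. It works with \emph{light} edges: $r$-tuples whose codegree is positive but bounded above by $\binom{h}{r}$. Using $H$-freeness, a hypergraph Tur\'an argument inside each neighbourhood $N_G(b)$ shows that a constant proportion of the total weight comes from light edges, and a convexity argument shows that every large subset $U\subset A$ spans $\Omega(|U|^r)$ light edges, i.e.\ the light-edge hypergraph is $(\rho,d)$-dense. The Kohayakawa--Nagle--R\"odl--Schacht counting theorem for linear hypergraphs (this is where the $C_4$-freeness of $H$ enters, exactly as you anticipated) then supplies $\Omega(|A|^{|X|})$ copies of the neighbourhood hypergraph of $H$ among the light edges, each yielding a homomorphic copy of $H$. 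Degeneracy is handled not by demanding high codegree but by the \emph{upper} bound on light-edge codegrees: a degenerate copy forces a vertex of $B$ with $r+1$ specified neighbours where $r$ of them form a light edge, and the bounded codegree caps the number of such configurations at $O(\d n^{|X|-1})=o(|A|^{|X|})$ once one also assumes $\d\le n^{1-1/2r}$ (which is permissible since $\ex(n,H)=O(n^{2-1/r})$ is already known). So the two essential ingredients missing from your proposal are the light/heavy dichotomy (replacing the false high-codegree concentration) and the KNRS dense counting theorem (replacing independent random sampling).
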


We will need the following generalisation of a simple lemma from~\cite{CL18}.

\begin{lemma} \label{locallydenser}
	Let $r\geq 2$ be an integer and let $G$ be a bipartite graph with bipartition $A\cup B$, $|B|=n$, and minimum degree at least $\d$ on the vertices in $A$. Then, for any subset $U\s A$ with $|U|\geq \frac{rn}{\d}$,
	$$ \sum_{u_1\dots u_r\in {U \choose r}} d(u_1,\dots,u_r)\geq \frac{\d^r}{r^rn^{r-1}}|U|^r\geq \frac{\d^r}{r^rn^{r-1}}{|U| \choose r}.$$
\end{lemma}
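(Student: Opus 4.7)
The plan is to prove this via a simple double-counting identity combined with Jensen's inequality. Writing $d_U(v) := |N(v) \cap U|$ for the degree of $v \in B$ restricted to $U$, both sides of
$$\sum_{u_1\dots u_r \in \binom{U}{r}} d(u_1, \ldots, u_r) = \sum_{v \in B} \binom{d_U(v)}{r}$$
count pairs consisting of an $r$-subset of $U$ together with a common neighbour in $B$. The hypothesis that every $u \in U \subseteq A$ has degree at least $\delta$ in $G$ gives $\sum_{v \in B} d_U(v) = e(U,B) \geq \delta |U|$, so the average $\bar d := \frac{1}{n} \sum_v d_U(v)$ satisfies $\bar d \geq \delta |U|/n \geq r$, where the final inequality is exactly the assumption $|U| \geq rn/\delta$.

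The main step is then a convexity argument. The function $f(d) := \binom{d}{r}$ is discretely convex on $\mathbb{Z}_{\geq 0}$, since by Pascal's rule its second difference equals $\binom{d-1}{r-2} \geq 0$; its piecewise linear extension $\tilde f$ to $\mathbb{R}_{\geq 0}$ is therefore convex, and Jensen's inequality yields $\sum_{v \in B} \binom{d_U(v)}{r} \geq n \, \tilde f(\bar d)$. It now suffices to prove $\tilde f(\bar d) \geq \bar d^{\,r} / r^r$ whenever $\bar d \geq r$. For an integer $d \geq r$, the elementary inequality $d - i \geq \frac{r-i}{r} d$ (which is equivalent to $d \geq r$) applied to each factor of the falling factorial gives
$$\binom{d}{r} = \frac{1}{r!} \prod_{i=0}^{r-1} (d-i) \geq \frac{1}{r!} \prod_{i=0}^{r-1} \frac{r-i}{r} d = \frac{d^r}{r^r}.$$
For $\bar d \in [d, d+1]$ with $d \geq r$, combining this bound at the two endpoints with the convexity of $x \mapsto x^r$ extends the inequality $\tilde f(\bar d) \geq \bar d^{\,r}/r^r$ to real values, and chaining everything gives
$$\sum_{v \in B} \binom{d_U(v)}{r} \geq n \cdot \frac{\bar d^{\,r}}{r^r} \geq \frac{n}{r^r} \left(\frac{\delta |U|}{n}\right)^{r} = \frac{\delta^r |U|^r}{r^r n^{r-1}},$$
as claimed. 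The second inequality in the statement is immediate from $|U|^r \geq \binom{|U|}{r}$.

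The main technical subtlety is that $\binom{d}{r}$ vanishes for $d < r$, so a naive pointwise inequality like $\binom{d}{r} \geq d^r/r^r$ is false for small $d$, and one is forced to work globally via Jensen. The hypothesis $|U| \geq rn/\delta$ plays precisely the role of pushing the average $\bar d$ into the range $[r,\infty)$, which is exactly where the convexity bound closes with the clean constant $r^r$.
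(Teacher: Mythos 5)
Your proof is correct and follows essentially the same route as the paper: rewrite the sum as $\sum_{b\in B}\binom{d_U(b)}{r}$ by double counting, apply Jensen via the convexity of $\binom{x}{r}$, and then use $\binom{x}{r}\geq (x/r)^r$ for $x\geq r$, which is where the hypothesis $|U|\geq rn/\d$ enters. The only difference is that you spell out the convex extension of $\binom{\cdot}{r}$ (piecewise linear, nonnegative for small arguments) more carefully than the paper does, which is a welcome but inessential refinement.
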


\begin{proof2}
	Writing $d_U(v)$ for $|N_G(v)\cap U|$, we have that
	\begin{align*}
		\sum_{u_1\dots u_r\in {U \choose r}} d(u_1,\dots,u_r)&=\sum_{b\in B} {d_U(b) \choose r}\geq n{\sum_{b\in B} d_U(b)/n \choose r} \\
		&=n{\sum_{u\in U} d(u)/n \choose r}\geq n{\d |U|/n \choose r} \\
		&\geq n\Big(\frac{\d |U|}{rn}\Big)^r=\frac{\d^r}{r^rn^{r-1}}|U|^r,
	\end{align*}
	where the first inequality follows from the convexity of ${x \choose r}$ and in the last inequality we used that $|U|\geq \frac{rn}{\d}$.
\end{proof2}

\medskip

Given a bipartite graph $G$ with bipartition $A\cup B$, the \emph{neighbourhood r-graph} is the weighted $r$-uniform hypergraph $W_G$ on vertex set $A$ where the weight of the hyperedge $u_1\dots u_r$ (for $u_1,\dots,u_r$ distinct) is $d(u_1,\dots,u_r)$. For a subset $U\s A$, we write $W(U)$ for the total weight in $U$, i.e., $W(U)=\sum_{u_1\dots u_r\in {U \choose r}} d(u_1,\dots,u_r)$. In this language, the conclusion of Lemma \ref{locallydenser} is that $W(U)\geq  \frac{\d^r}{r^rn^{r-1}}{|U| \choose r}$.

In the next definition, for a weighted $r$-graph $W$ on vertex set $A$ and $u_1,\dots,u_r\in A$, we write $W(u_1,\dots,u_r)$ for the weight of the hyperedge $u_1\dots u_r$. Moreover, in what follows we fix $r\geq 2$ and a bipartite graph $H$ with the property that in one part all the degrees are exactly $r$. Let $h=|V(H)|$.

\begin{definition}
	Let $W$ be a weighted $r$-graph on vertex set $A$ and let $u_1,\dots,u_r\in A$ be distinct. We say that $u_1\dots u_r$ is a \emph{light edge} if $1\leq W(u_1,\dots,u_r)<{h\choose r}$ and that it is a \emph{heavy edge} if $W(u_1,\dots,u_r)\geq {h\choose r}$.
\end{definition}

Note that if there is a $K^{(r)}_{h}$ in $W_G$ formed by heavy edges, then clearly there is a copy of $H$ in $G$. This observation is an important ingredient in our next lemma.

\begin{lemma} \label{manylightr}
	Let $G$ be an $H$-free bipartite graph with bipartition $A\cup B$, $|B|=n$, and suppose that $W(A)\geq 2h^rn$. Then the number of light edges in $W_G$ is at least $\frac{W(A)}{2h^{2r}}$.
\end{lemma}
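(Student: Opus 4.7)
The plan is to reduce the statement to the observation recorded just before the lemma: if $W_G$ contains an $h$-vertex ``clique'' $K_h^{(r)}$ whose every $r$-edge is heavy, then $G$ contains a copy of $H$. The first step is to formalise this observation. Writing $X\cup Y$ for a bipartition of $H$ with every $X$-vertex of degree exactly $r$, and given an $h$-subset $S\subseteq A$ all of whose $r$-subsets are heavy, one injects $Y$ into $S$ (possible since $|Y|\le h$) and then greedily embeds $X$ into $B$: each $x\in X$ must be placed in the common $B$-neighbourhood of the image of $N_H(x)$, a set of size at least $\binom{h}{r}$, while avoiding the at most $|X|-1\le h-1$ previously used vertices. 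This greedy step succeeds because $\binom{h}{r}\ge h$ whenever $h\ge r+1$, which is automatic for any nontrivial $H$.

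Next I would fix $b\in B$ with $d(b)\ge h$ and consider all $h$-subsets of $N(b)$. By the observation and $H$-freeness, each such $h$-subset contains at least one $r$-subset that is not heavy; any such $r$-subset lies in $N(b)$, so it has $b$ as a common neighbour and weight at least $1$, and is therefore light. A standard double count---each $r$-subset of $N(b)$ sits in $\binom{d(b)-r}{h-r}$ many $h$-subsets of $N(b)$---then yields that the number of light $r$-subsets contained in $N(b)$ is at least
\[
\frac{\binom{d(b)}{h}}{\binom{d(b)-r}{h-r}} \;=\; \frac{\binom{d(b)}{r}}{\binom{h}{r}}.
\]

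Finally, I would sum this local bound over $b$ and write $W_\ell$ for the total weight on light edges. Since each light $T$ contributes $W(T)$ to $\sum_b|\{T\text{ light}:T\subseteq N(b)\}|$, the double sum equals $W_\ell$, so
\[
W_\ell \;\ge\; \frac{1}{\binom{h}{r}}\sum_{b:\,d(b)\ge h}\binom{d(b)}{r}.
\]
Vertices of $B$ with $d(b)<h$ contribute at most $n\binom{h-1}{r}\le h^r n \le \tfrac{1}{2}W(A)$ to $W(A)$ by the hypothesis $W(A)\ge 2h^r n$, so the right-hand side is at least $W(A)/(2\binom{h}{r})$. Since every light edge has weight strictly less than $\binom{h}{r}\le h^r$, the number of light edges is at least $W_\ell/h^r \ge W(A)/(2h^{2r})$, as required.

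The only genuinely delicate point is the greedy embedding of the first step, which relies on the mild combinatorial inequality $\binom{h}{r}\ge h$; the remainder is essentially weighted double counting together with discarding the small contribution to $W(A)$ from vertices of $B$ of degree below $h$.
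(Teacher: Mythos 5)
Your proof is correct and follows essentially the same route as the paper's: a Tur\'an-type count of non-heavy $r$-sets inside each $N(b)$ with $d(b)\geq h$, discarding the low-degree vertices of $B$ using $W(A)\geq 2h^rn$, and a final double count using that a light edge lies in fewer than $\binom{h}{r}$ neighbourhoods. The only cosmetic differences are that you re-derive the bound $\ex(t,K_h^{(r)})\leq(1-1/\binom{h}{r})\binom{t}{r}$ by averaging over $h$-subsets (and spell out the embedding behind the ``heavy clique gives $H$'' observation), where the paper simply cites these facts.
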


\begin{proof2}
	Suppose $B=\{b_1,\dots,b_n\}$. Let $k_i=|N_G(b_i)|$ and suppose that $k_i\geq h$ for some $i$. As $G$ is $H$-free, there is no $K^{(r)}_{h}$ in $W\lbrack N_G(b_i)\rbrack$ formed by heavy edges. Since $\ex(t,K^{(r)}_{h})\leq (1-1/{h \choose r}){t \choose r}$ holds for $t\geq h$, the number of light edges in $W[N_G(b_i)]$ is at least $\frac{{k_i \choose r}}{{h \choose r}}$. But
	
	\begin{equation*}
		\sum_{i: k_i<h} {k_i \choose r}<h^rn\leq \frac{W(A)}{2},
	\end{equation*} so
	
	\begin{equation*}
		\sum_{i: k_i\geq h} {k_i \choose r}\geq \frac{W(A)}{2}.
	\end{equation*}
	Since every light edge is present in at most ${h \choose r}$ of the sets $N_G(b_i)$, it follows that the total number of light edges is at least
	
	$$ \frac{1}{{h \choose r}}\sum_{i: k_i\geq h} \frac{{k_i \choose r}}{{h \choose r}}\geq \frac{W(A)}{2h^{2r}},$$
	as required.
\end{proof2}

\begin{corollary} \label{lightcorollaryr}
	Let $G$ be an $H$-free bipartite graph with bipartition $A\cup B$, $|B|=n$, and minimum degree at least $\d$ on the vertices in $A$. Then, for any subset $U\subset A$ with $|U|\geq \frac{2hrn}{\d}$, the number of light edges in $W_G\lbrack U\rbrack$ is at least $\frac{\d^r}{2h^{2r}r^rn^{r-1}}{|U| \choose r}$.	
\end{corollary}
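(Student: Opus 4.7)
The plan is to glue together the two previous lemmas by applying Lemma \ref{manylightr} to the induced bipartite subgraph $G[U\cup B]$. This subgraph is $H$-free, and its neighbourhood $r$-graph is precisely the restriction $W_G[U]$, so a light edge in the sense of $G[U\cup B]$ is a light edge of $W_G[U]$. The total weight of its neighbourhood $r$-graph is exactly $W(U)$ in the notation of Lemma \ref{locallydenser}.

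First, I would invoke Lemma \ref{locallydenser}. The hypothesis $|U|\geq rn/\d$ holds since $|U|\geq 2hrn/\d \geq rn/\d$, so
\[
W(U)\geq \frac{\d^r}{r^r n^{r-1}}|U|^r \geq \frac{\d^r}{r^r n^{r-1}}\binom{|U|}{r}.
\]
Next, I would check the hypothesis $W(U)\geq 2h^r n$ required by Lemma \ref{manylightr}. Using the first inequality above, it suffices that $|U|^r\geq 2h^r r^r n^r/\d^r$, that is, $|U|\geq 2^{1/r}hrn/\d$, and this is comfortably implied by the assumption $|U|\geq 2hrn/\d$. The constant $2hr$ in the hypothesis of the corollary has evidently been chosen precisely so that both preceding lemmas can be applied in tandem.

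Finally, Lemma \ref{manylightr} applied to $G[U\cup B]$ gives that the number of light edges in $W_G[U]$ is at least $W(U)/(2h^{2r})$, and substituting the lower bound from Lemma \ref{locallydenser} yields
\[
\frac{W(U)}{2h^{2r}}\geq \frac{\d^r}{2h^{2r}r^r n^{r-1}}\binom{|U|}{r},
\]
which is the desired bound. I do not anticipate any real obstacle here: the statement is essentially a bookkeeping consequence of Lemma \ref{locallydenser} (to force enough total weight) and Lemma \ref{manylightr} (to convert total weight into light edges), and the only slightly delicate step is verifying that the hypothesis on $|U|$ is strong enough to trigger both.
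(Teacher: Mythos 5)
Your proof is correct and follows essentially the same route as the paper: apply Lemma \ref{locallydenser} to lower-bound $W(U)$, verify that the hypothesis $|U|\geq 2hrn/\d$ forces $W(U)\geq 2h^rn$ (the paper notes $W(U)\geq 2^rh^rn$ directly), and then apply Lemma \ref{manylightr} to $G[U\cup B]$. No issues.
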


\begin{proof2}
	By Lemma \ref{locallydenser}, we have $W(U)\geq \frac{\d^r}{r^rn^{r-1}}|U|^r\geq 2^rh^rn$. Hence, the result follows by applying Lemma \ref{manylightr} to the graph $G\lbrack U\cup B\rbrack$. 
\end{proof2}

\medskip

We now recall Definition 5 from \cite{KNRS10}.

\begin{definition}
	An $r$-uniform hypergraph $\mathcal{G}=(V,E)$ is $(\rho,d)$-dense if, for any subset $U\s V$ of size $|U|\geq \rho |V|$, $e_{\mathcal{G}}(U)\geq d{|U| \choose r}$.
\end{definition}

Recall also that a \emph{linear hypergraph} is a hypergraph where any two edges intersect in at most one vertex. The following result follows from Theorem 7 in \cite{KNRS10}.

\begin{theorem}[Kohayakawa--Nagle--R\"odl--Schacht] \label{embed}
	Let $\L$ be a linear $r$-uniform hypergraph on $\ell$ vertices. Then, for every $d>0$, there exist $\rho=\rho(\L,d)>0$, $\e=\e(\L,d)>0$ and $n_0=n_0(\L,d)$ such that every $(\rho,d)$-dense $r$-uniform hypergraph $\G=(V,E)$ on $n\geq n_0$ vertices contains at least $\e|V|^\ell$ copies of $\L$.
\end{theorem}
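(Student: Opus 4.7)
The plan is to derive this essentially from Theorem 7 of Kohayakawa--Nagle--R\"odl--Schacht \cite{KNRS10}, which provides a counting lemma for linear hypergraphs in dense hypergraphs satisfying a hereditary density condition. The underlying idea there is to embed $\L$ vertex by vertex, exploiting linearity crucially: since any two edges of $\L$ intersect in at most one vertex, when extending a partial embedding by a new vertex $v$, the constraint on its image is a conjunction, one per edge of $\L$ containing $v$, of conditions determined by $r-1$ already-embedded neighbors, and these constraints ``decouple'' across distinct edges incident to $v$.

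More concretely, I would order the vertices of $\L$ as $v_1,\dots,v_\ell$ (for instance, picking any ordering and processing edges as soon as all their vertices have been placed) and, at step $i$, for most partial embeddings $\phi(v_1),\dots,\phi(v_{i-1})$, count the choices of $\phi(v_i)\in V(\G)$ such that each edge of $\L$ incident to $v_i$ and already having its other vertices embedded maps to an edge of $\G$. The $(\rho,d)$-density hypothesis is then applied to appropriate ``link'' subsets of $V(\G)$, namely the set of vertices forming an edge with specified $(r-1)$-tuples coming from earlier steps; provided that each such link is of size at least $\rho|V|$, the density condition guarantees a positive proportion of valid extensions.

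The main technical obstacle is the parameter chase: each restriction to a link shrinks the effective vertex set, so one needs to show that the resulting sub-hypergraph remains dense enough for the density hypothesis to be reusable, and $\rho=\rho(\L,d)$ and $\e=\e(\L,d)$ must be chosen small enough to survive all $\ell$ stages of the embedding. This bookkeeping, together with a standard averaging step to guarantee that a positive fraction of partial embeddings lead to many completions, is exactly what is carried out in \cite{KNRS10}. In practice, the cleanest route is simply to verify that the $(\rho,d)$-density condition stated here matches the hypothesis of Theorem 7 of that paper and quote its conclusion, obtaining at least $\e|V|^\ell$ labelled copies of $\L$ in $\G$.
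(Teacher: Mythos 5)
Your proposal is correct and matches the paper exactly: the paper offers no proof of this statement, simply noting that it follows from Theorem~7 of \cite{KNRS10}, which is precisely the route you settle on. (Your sketched internal mechanism is not quite how that theorem is actually proved — the $(\rho,d)$-density hypothesis controls induced subhypergraphs on large vertex sets rather than links of $(r-1)$-tuples, and the real argument goes through weak hypergraph regularity — but since you ultimately just quote the cited result, this is immaterial.)
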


\smallskip

We are now in a position to complete the proof of Theorem \ref{mainreduced}.

\begin{proof}[\textnormal{\textbf{Proof of Theorem \ref{mainreduced}}}]
	We may assume that $\d\leq n^{1-1/2r}$, as we already know that $\ex(n,H)=O(n^{2-1/r})$. Suppose that $G$ is $H$-free. Define $\G$ to be the $r$-uniform (simple) hypergraph whose vertex set is $A$ and whose edges are precisely the light edges of $W_G$. By Corollary \ref{lightcorollaryr}, for any $U\s A$ with $|U|\geq \frac{2hrn}{\d}$, we have $$e_{\G}(U)\geq \frac{\d^r}{2h^{2r}r^rn^{r-1}}{|U| \choose r}\geq \frac{c^r}{2h^{2r}r^r}{|U| \choose r}.$$
	Suppose $H$ has bipartition $X\cup Y$ with every vertex in $Y$ having degree $r$. Define $\L$ to be the $r$-uniform hypergraph whose vertex set is $X$ and whose edges are the neighbourhoods $N_H(y)$ for $y\in Y$. Since $H$ does not contain a $C_4$, it follows that $\L$ is linear. Let $d=\frac{c^r}{2h^{2r}r^r}$ and choose $\rho>0$, $\e>0$ and $n_0$ as in the conclusion of Theorem \ref{embed}. Note that for $n$ sufficiently large, we have $\frac{2hrn}{\d}< \rho|A|$, so $\G$ is $(\rho,d)$-dense and consequently contains at least $\e |A|^{|X|}$ copies of $\L$. 
	All these copies of $\L$ provide homomorphic copies of $H$ in $G$. To see this, suppose that we have a copy of $\L$ in $\G$. We map the vertices of $X$ to the vertices of $A$ so that the copy of $\L$ in $X$ maps isomorphically onto the copy of $\L$ in $\G$. Call this map $f$. To complete the embedding, for each $y \in Y$, we map $y$ to a vertex in the neighbourhood of $f(N_H(y))$. Note that this neighbourhood is non-empty because $N_H(y)$ is an edge of $\L$ and each such edge was mapped under $f$ to an edge of $\G$, which, by definition, has a non-empty neighbourhood. However, some of the resulting copies of $H$ may be degenerate in the sense that distinct vertices in $Y$ may be mapped to the same vertex in~$B$.
	
	We now give an upper bound for the number of degenerate copies of $H$, counting only those copies that were obtained by the method above. Any such degenerate copy must contain some $u\in B$ and $v_1,\dots,v_{r+1}\in N_G(u)$ with $v_1\dots v_r$ a light edge in $W_G$. The number of possible choices for such a configuration is at most $(2n)^r\cdot {h \choose r}\cdot K\d$, since we can choose $v_1,\dots,v_r$ in at most $(2n)^r$ ways (since $|A|\leq 2n$), then we can choose $u$ in at most ${h \choose r}$ ways (since $v_1\dots v_r$ is a light edge) and, finally, we can choose $v_{r+1}$ in at most $K\d$ ways (since $\Delta(G)\leq K\d$). 
	But the number of ways to extend this to a copy of $H$ is at most $(2n)^{|X|-r-1}\cdot {h \choose r}^{{|X| \choose r}}$, because we can map those vertices in $X$ that have not been mapped in at most $(2n)^{|X|-r-1}$ ways and, given any choice for the images of $X$, there are at most ${h \choose r}$ possible choices for the image of each $y\in Y$, since we are only counting those copies of $H$ in which $N_H(y)$ is mapped to a light edge. Thus, of the $\e|A|^{|X|}$ copies of $H$ that we found, at most ${h \choose r}^{{|X| \choose r}+1}K\d (2n)^{|X|-1}$ are degenerate. Since $\d \leq n^{1-1/2r}$ and $|A|\geq n/2$, for sufficiently large $n$ we obtain a non-degenerate copy of $H$.
\end{proof}

\section{The 1-subdivision of $K_{s,t}$} \label{sectionsubbipnew}

In this section, we prove Theorem \ref{subbipnew} and Corollary \ref{subbipnewcor}. By Lemma \ref{lemmaJSmodified}, Theorem \ref{subbipnew} reduces to the following. 

\begin{theorem} \label{bipreduced}
	Let $2\leq s\leq t$ be fixed integers and let $K\geq 1$ be a constant. Suppose that $G$ is a balanced bipartite graph with bipartition $A\cup B$, $|B|=n$, such that $G$ is $K$-almost-regular with minimum degree $\d=\omega(n^{1/2-\frac{1}{2s}})$. Then, for $n$ sufficiently large, $G$ contains a copy of~$K_{s,t}'$.
\end{theorem}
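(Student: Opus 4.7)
The plan is to count (weighted) copies of $K'_{s,t}$ in $G$ using the codegree structure in $A$ and to show that non-degenerate copies exist. For $a,a'\in A$, write $D(a,a')=|N(a)\cap N(a')|$. By $K$-almost-regularity and convexity,
\[
W\coloneqq\sum_{\{a,a'\}\subset A} D(a,a') = \sum_{b\in B}\binom{d(b)}{2}\geq c\,n\delta^2 = \omega(n^{2-1/s}),
\]
so the codegree graph $\Gamma$ on $A$ (in which $\{a,a'\}$ is an edge whenever $D(a,a')\geq 1$) carries a lot of cherry-weight distributed among the pairs in $A$. The goal is to find $K_{s,t}$ in $\Gamma$ together with a system of distinct representatives of intermediate vertices in $B$, which would embed $K'_{s,t}$ into $G$.

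I would then split the pairs in $\Gamma$ into \emph{light} ($1\leq D<L$) and \emph{heavy} ($D\geq L$) for a suitably chosen constant $L=L(s,t,K)\geq st$; at least one of $W_{\mathrm{light}}, W_{\mathrm{heavy}}$ is $\geq W/2$. If light dominates, then the light subgraph of $\Gamma$ has $\geq W/(2L) = \omega(n^{2-1/s})$ edges and, by the K\H{o}v\'ari--S\'os--Tur\'an theorem combined with supersaturation, contains $K_{s,T}$ for $T=T(s,t,L)$ arbitrarily large. Since each such edge has fewer than $L$ intermediate vertices in $B$, a random-selection/greedy argument among the $T$ columns extracts a $K_{s,t}$ whose intermediates can be chosen pairwise distinct, yielding $K'_{s,t}$ in $G$. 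If heavy dominates, dyadic refinement by codegree magnitude produces a level $\mathcal{P}_{L^*}=\{\{a,a'\}:D(a,a')\in[L^*,2L^*)\}$ with $L^*\geq L\geq st$ contributing $\geq W/O(\log n)$ cherries; provided $|\mathcal{P}_{L^*}|\geq c\,n^{2-1/s}$, the K\H{o}v\'ari--S\'os--Tur\'an theorem applied at this level together with a greedy choice of distinct intermediates (feasible since each edge has $\geq L^*\geq st$ candidates) again produces $K'_{s,t}$.

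The main obstacle I expect is the heavy--sparse subcase, where the heavy weight is concentrated on few pairs with codegree approaching $\delta$, so $|\mathcal{P}_{L^*}|$ falls below the $n^{2-1/s}$ threshold for direct K\H{o}v\'ari--S\'os--Tur\'an. Here one must either iterate a density-increment argument inside the bipartite subgraph induced by the common neighbourhood of a very high-codegree pair (finding $K'_{s-1,t}$ there and extending to $K'_{s,t}$), or directly compare the weighted homomorphism count $\sum_{a_1,\ldots,a_s,a'_1,\ldots,a'_t}\prod_{i,j}D(a_i,a'_j)$ against its degenerate contributions via a Sidorenko-type lower bound, reducing to Kang--Kim--Liu style tree-counting lemmas. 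This delicate step---together with the careful verification of the system-of-distinct-representatives argument in the light case---is where the exponent $\tfrac{1}{2s}$ enters tightly and where the almost-regular and balanced structure of $G$ is crucially used.
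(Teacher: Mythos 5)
Your opening computation and the light/heavy split match the paper's setup, but there is a genuine gap in your light case, which is where the real difficulty of this theorem lies. Finding $K_{s,T}$ for a \emph{constant} $T$ (which is all that K\H{o}v\'ari--S\'os--Tur\'an plus supersaturation can provide) in the light codegree graph is not enough to extract a non-degenerate $K'_{s,t}$. The obstruction: for each pair $(x_i,y_j)$ of the $K_{s,T}$ the intermediate vertex must be chosen from a set of size between $1$ and $L$, and a single vertex $b\in B$ can be the unique candidate for up to $K\delta$ different columns $y_j$ (namely those $y_j\in N_G(b)$). So once $t-1$ columns have been completed, the $O(st)$ used intermediates can block $\Theta(\delta)$ further columns, and a constant number $T$ of columns can all be blocked simultaneously; your ``random-selection/greedy argument'' therefore does not go through. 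To make a greedy/maximality argument close, one needs an $s$-set whose common neighbourhood in the codegree graph, witnessed by \emph{distinct} elements of $B$, has size $\omega(\delta)$ rather than $\omega(1)$. This is exactly what the paper manufactures: it counts $s$-stars with distinct representatives (Lemma \ref{doublecount}, giving $\Omega(n\delta^{2s})=\omega(n^s)$ of them), and then shows that if all resulting candidates degenerate, the degeneracies can be patched together (Lemma \ref{onecolour} and Corollary \ref{cor16}) to produce an $s$-set $\{x_1,\dots,x_s\}$ contained in a single $N_G(b)$, spanning no heavy pair, with $\omega(\delta)$ common $W$-neighbours; only then does the final maximality argument succeed, since the blocked vertices lie in at most $st$ neighbourhoods of size at most $K\delta$ and cannot cover an $\omega(\delta)$-set.

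Your heavy case is also not in the right shape. The paper needs no separate heavy-dominated case: it invokes a lemma of Janzer asserting that in a $K'_{s,t}$-free graph a constant fraction of the total codegree weight sits on light edges (an $(s+t)$-clique of heavy edges inside some $N_G(b)$ would already yield $K'_{s,t}$ by a Ramsey-type argument), so the count of $\Omega(n\delta^2)$ light edges is unconditional under the freeness hypothesis. Your dyadic refinement loses a $\log n$ factor, and you acknowledge yourself that the heavy-sparse subcase is unresolved; the suggested remedies (density increment, Sidorenko-type comparisons) are not carried out. As written, the proposal does not constitute a proof.
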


Note that the assumption that $\d= \omega(n^{1/2-\frac{1}{2s}})$ is purely for notational convenience. The proof goes through in exactly the same way when we replace this assumption with $\d \geq C n^{1/2-\frac{1}{2s}}$ for a sufficiently large constant $C$, but using $\omega$ allows us to ignore how this constant changes at each step. We will use a similar convention in the following sections.

In what follows, let $2\leq s\leq t$ be fixed integers and $K\geq 1$ a constant. Given a bipartite graph $G$ with bipartition $A\cup B$, we write $W_G$ for the neighbourhood graph of $G$ on vertex set $A$. Recall from Section \ref{sectionmaxdegr} that this is the weighted graph where the weight $W(u,v)$ of the pair $uv$ is $d_G(u,v)$. For distinct $u,v\in A$, we say that $uv$ is a \emph{light edge} (in $W_G$) if $1\leq W(u,v)< {s+t \choose 2}$ and a \emph{heavy edge} if $W(u,v)\geq {s+t \choose 2}$.

Let us first give a rough sketch of the proof of Theorem \ref{bipreduced}. Note that any $K_{s,t}$ in the neighbourhood graph $W_G$ yields a homomorphic copy of $K_{s,t}'$ in $G$. However, it may be a degenerate copy. Nevertheless, the first step is to find many copies of $K_{s,t}$ in~$W_G$. By the degree conditions, the total weight in $W_G$ is $\omega(n^{2-\frac{1}{s}})$, so if $W_G$ was a simple graph rather than a weighted graph, we could find $\omega(n^{s})$ copies of $K_{s,t}$. Thus, we first prove that there are $\omega(n^{2-\frac{1}{s}})$ pairs in $A$ which determine an edge (of arbitrary positive weight) in~$W_G$.

Once we have established this, we run the usual proof for finding a $K_{s,t}$, namely, we double count the number of $s$-stars in the graph $W_G$ (or, more precisely, in the simple graph obtained by replacing each edge of $W_G$ by a simple edge). On average, a set of size $s$ will have a common neighbourhood of size $\omega(1)$. This provides us with $\omega(n^s)$ copies of $K_{s,t}$ in $W_G$. 
We then argue that if all of these yield degenerate copies of $K_{s,t}'$ in $G$, then some degenerate copies can be patched together to find an $s$-set $S$ with abnormally large common neighbourhood. 
More precisely, there is an $s$-set $S\s N_G(b)$ for some $b \in B$ with common neighbourhood of size~$\omega(\d)$ in $W_G$, which is very large compared to the typical size~$\omega(1)$ of the common neighbourhood of an $s$-set.
It is then fairly easy to use this property to show that there must be a $K_{s,t}$ in $W_G$ (with the part of order $s$ being equal to $S$) that gives a non-degenerate copy of $K_{s,t}'$ in $G$.

\medskip

For distinct vertices $u_1,\dots,u_s\in A$, we write $N'_W(u_1,\dots,u_s)$ for the set of those $x\in A$ which are distinct from all $u_i$ and for which there exist distinct $b_1,\dots,b_s\in B$ such that $b_i\in N_G(u_i)\cap N_G(x)$ for all $i$. Informally, the $xu_i$ are edges in $W_G$ coming from distinct elements of $B$. We also write $d'_W(u_1,\dots,u_s)=|N'_W(u_1,\dots,u_s)|$. 

Roughly speaking, the next lemma gives a lower bound on the number of $s$-stars in the graph $W_G$, as promised in the sketch above.

\begin{lemma} \label{doublecount}
	Let $G$ be a $K_{s,t}'$-free balanced bipartite graph with bipartition $A\cup B$, $|B|=n$, such that $G$ is $K$-almost-regular with minimum degree $\d=\omega(n^{1/2-\frac{1}{2s}})$. Then $$\sum  d_W'(u_1,\cdots,u_s)=\Omega(n\d^{2s}),$$ where the sum is taken over all choices of distinct $u_1,\dots,u_s\in A$.
\end{lemma}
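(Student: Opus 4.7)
The plan is to swap the order of summation so that $\sum d'_W(u_1,\ldots,u_s)=\sum_{x\in A} f(x)$, where $f(x)$ counts ordered $s$-tuples $(u_1,\ldots,u_s)$ of distinct vertices in $A\setminus\{x\}$ for which there exist distinct $b_1,\ldots,b_s\in B$ with $b_i\in N_G(u_i)\cap N_G(x)$. For each $x$ I would consider the bipartite auxiliary graph $H_x$ on $(A\setminus\{x\})\cup N_G(x)$ whose edges are the adjacencies of $G$; by $K$-almost-regularity, $H_x$ has $e(H_x)=\sum_{u\neq x}d(u,x)=\Theta(\delta^2)$ edges and maximum degree at most $K\delta$ on either side. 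A greedy edge-by-edge argument then shows that $H_x$ has $\Omega(\delta^{2s})$ ordered $s$-matchings, since the first edge has $\Theta(\delta^2)$ choices and each subsequent edge is forbidden on only $O(K\delta)$ others, which is negligible once $\delta$ is large compared with $sK$. Summing over the $\Theta(n)$ choices of $x$ produces $\Omega(n\delta^{2s})$ labelled matching configurations $(x,u_1,b_1,\ldots,u_s,b_s)$.

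The main obstacle, which I expect to be the technical heart of the proof, is passing from this count of labelled configurations to the unweighted count of tuples $(u_1,\ldots,u_s,x)$ in $\sum d'_W$. A single tuple may arise from up to $\prod_i d(u_i,x)\leq(K\delta)^s$ labelled configurations, and a naive division therefore only yields $\Omega(n\delta^s)$, short of the target by a factor $\delta^s$. To close this gap I would invoke the $K_{s,t}'$-free hypothesis via a K\H{o}v\'ari--S\'os--Tur\'an estimate: the subgraph of $W_G$ consisting of \emph{heavy} pairs with $d(u,v)\geq C:=\binom{s+t}{2}$ is $K_{s,t}$-free, since any $K_{s,t}$ in it would supply enough distinct subdivision vertices in $B$ to build a forbidden $K_{s,t}'$ greedily. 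Consequently this heavy subgraph has at most $O(n^{2-1/s})$ edges, and the same bound holds for the subgraph of pairs with codegree at least $2^jC$ for each $j\geq 0$.

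The strategy is then to restrict to matching configurations all of whose pairs $(u_i,x)$ are light, i.e.\ have codegree at most $C$; for such tuples the multiplicity of the projection is bounded by the constant $C^s$, so it suffices to show that these light configurations still account for a constant fraction of the $\Omega(n\delta^{2s})$ matchings produced earlier. This final accounting is the delicate step: in the intermediate regime $n^{1/2-1/(2s)}\ll\delta\ll n^{1-1/s}$ the trivial bound $K\delta\cdot O(n^{2-1/s})$ on the total heavy weight is not tight enough (it can exceed the total weight $\Theta(n\delta^2)$ of $W_G$ itself), so a finer argument is required. I expect a dyadic decomposition of the codegree scale, combined with the K\H{o}v\'ari--S\'os--Tur\'an bound $O(n^{2-1/s})$ at each level and a careful count of the remaining $2s-2$ edges of each matching after localising one heavy edge, to show that configurations containing at least one heavy pair contribute only a lower-order quantity. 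Summing the dyadic contributions and dividing by $C^s$ then yields $\sum d'_W=\Omega(n\delta^{2s})$.
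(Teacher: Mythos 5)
Your first step (the count of $\Omega(n\delta^{2s})$ labelled configurations $(x,u_1,b_1,\dots,u_s,b_s)$, i.e.\ ordered $s$-matchings in the auxiliary graph $H_x$) is correct and is morally the same as the paper's convexity estimate; your use of matchings to force the $b_i$ to be distinct plays the role of condition (ii) in the paper's proof. You have also correctly located where the real content lies: one must show that configurations in which every pair $(u_i,x)$ is \emph{light} already number $\Omega(n\delta^{2s})$. But the route you propose for this step does not work. Bounding the heavy contribution from above via K\H{o}v\'ari--S\'os--Tur\'an cannot succeed in the regime $n^{1/2-1/(2s)}\ll\delta\ll n^{1-1/s}$: the dyadic sum you describe reduces exactly to (total weight on heavy pairs)$\cdot O(\delta^{2s-2})$, and the only upper bound KST gives on the heavy weight is $O(n^{2-1/s})\cdot K\delta$, which exceeds the total weight $\Theta(n\delta^2)$ precisely when $\delta\ll n^{1-1/s}$. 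This is not merely a lossy estimate that a finer decomposition would repair; the heavy pairs genuinely may carry almost all of the weight $W(A)$, so no complement argument of the form ``heavy configurations are lower order'' can be correct. What is needed is a direct \emph{lower} bound on the number of light pairs, not an upper bound on the heavy ones.

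The missing ingredient is the lemma of Janzer quoted in the paper immediately after the statement (Lemma 10 of \cite{Ja18}): if $W(A)\geq 8(s+t+1)^2n$, then the \emph{number} of light edges of $W_G$ is at least $W(A)/4(s+t+1)^3=\Omega(n\delta^2)$. Its proof is local rather than global: for each $b\in B$, every pair inside $N_G(b)$ is an edge of $W_G$, and $K'_{s,t}$-freeness forbids a clique $K_{s+t}$ of heavy edges inside $N_G(b)$, so by Tur\'an a fixed proportion of the $\binom{d(b)}{2}$ pairs in $N_G(b)$ are light; summing over $b$ and observing that a light edge lies in at most $O(1)$ of the sets $N_G(b)$ (its weight is $O(1)$ by definition) converts the weighted total $\sum_b\binom{d(b)}{2}$ into a count of light edges. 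With $\sum_{x}d_1(x)=\Omega(n\delta^2)$ for $d_1(x)$ the light degree, convexity gives $\sum_x d_1(x)^s=\Omega(n\delta^{2s})$ tuples with all pairs light, and each such tuple has multiplicity $O(1)$, which is the conclusion you were aiming for. So your outline is recoverable, but only after replacing the KST/dyadic step by this local supersaturation argument.
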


In the proof of this lemma, we make use of the following result, which is an easy consequence of Lemma 10 from \cite{Ja18}.

\begin{lemma}
	Let $G$ be a $K_{s,t}'$-free bipartite graph with bipartition $A\cup B$, $|B|=n$, and suppose that $W(A)\geq 8(s+t+1)^2n$. Then the number of light edges in $W_G$ is at least $\frac{W(A)}{4(s+t+1)^3}$.
\end{lemma}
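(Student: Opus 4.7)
The plan is to mirror the structure of Lemma~\ref{manylightr}, specialised to $r=2$ and $H = K_{s,t}'$. The key observation is that a copy of $K_{s,t}$ in $W_G$ all of whose edges are heavy yields a copy of $K_{s,t}'$ in $G$: for each of the $st$ edges $uv$ of that $K_{s,t}$, the heavy condition gives $W(u,v)\geq\binom{s+t}{2}\geq st$ common neighbours in $B$, so we can greedily pick $st$ pairwise distinct subdivision vertices (at every step, fewer than $st$ are forbidden but $\binom{s+t}{2}$ are available). Since $G$ is $K_{s,t}'$-free, this means that for every $b\in B$ with $d(b)\geq s+t$, the subgraph of $W_G[N_G(b)]$ formed by heavy edges is $K_{s,t}$-free.

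Next, I would use the fact that $K_{s,t}\subseteq K_{s+t}$, so the heavy-edge graph on $N_G(b)$ is also $K_{s+t}$-free. By a crude averaging/Turán-style bound (for instance, the elementary estimate $\mathrm{ex}(N,K_{s+t})\leq(1-1/\binom{s+t}{2})\binom{N}{2}$ for $N\geq s+t$, proved by summing over $(s+t)$-subsets), we obtain a constant $c=c(s,t)>0$ such that, for every such $b$, the number of light edges in $W_G[N_G(b)]$ is at least $c\binom{d(b)}{2}$.

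Now I would exploit the hypothesis $W(A)\geq 8(s+t+1)^2 n$, using the identity $W(A)=\sum_{b\in B}\binom{d(b)}{2}$. The contribution of vertices $b$ with $d(b)<s+t$ is at most $n\binom{s+t}{2}$, which is comfortably smaller than $W(A)/2$ by the hypothesis. Hence $\sum_{b:\,d(b)\geq s+t}\binom{d(b)}{2}\geq W(A)/2$, and summing the per-$b$ bound gives
\[
\sum_{b:\,d(b)\geq s+t}\#\{\text{light edges in }W_G[N_G(b)]\}\;\geq\;\tfrac{c}{2}W(A).
\]
Finally, each light edge $\{u,v\}$ is counted by at most $W(u,v)<\binom{s+t}{2}$ of the sets $N_G(b)$, so dividing yields a lower bound of the form $W(A)/C(s,t)$ on the number of light edges.

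The main (and only) obstacle is bookkeeping the constant $C(s,t)$ down to the quoted $4(s+t+1)^3$. A direct application of Turán already gives a bound of the correct shape $W(A)/\mathrm{poly}(s+t)$, but the cleanest way to land exactly on $4(s+t+1)^3$ is to invoke Lemma~10 of \cite{Ja18} (which is tailored to $K_{s,t}'$) rather than re-optimising the Turán/KST input by hand; the hypothesis $W(A)\geq 8(s+t+1)^2 n$ is precisely calibrated so that the small-degree losses in step three are absorbed cleanly. Beyond that, the argument is a routine adaptation of Lemma~\ref{manylightr}.
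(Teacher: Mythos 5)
The paper does not actually prove this lemma: it is quoted as ``an easy consequence of Lemma 10 from \cite{Ja18}'', so the right benchmark is the paper's own Lemma \ref{manylightr}, of which your argument is the natural $r=2$, $h=s+t$ specialisation. Your outline is sound: a $K_{s+t}$ (a fortiori a $K_{s,t}$) of heavy edges yields a greedy embedding of $K'_{s,t}$ since $\binom{s+t}{2}\geq st$; every pair inside $N_G(b)$ is an edge of $W_G$ (they share the neighbour $b$), so non-heavy pairs there are light; each light edge lies in fewer than $\binom{s+t}{2}$ of the sets $N_G(b)$; and the hypothesis $W(A)\geq 8(s+t+1)^2n$ absorbs the vertices $b$ with $d(b)<s+t$ because $n\binom{s+t}{2}\leq 4(s+t+1)^2 n\leq W(A)/2$.

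The one genuine issue is the constant, which you flagged but resolved by deferring back to \cite{Ja18}; that is not necessary. With the crude averaging bound $\ex(N,K_{s+t})\leq\bigl(1-1/\binom{s+t}{2}\bigr)\binom{N}{2}$ you get at least $\binom{d(b)}{2}/\binom{s+t}{2}$ light edges per relevant $b$ and hence only $W(A)/\bigl(2\binom{s+t}{2}^2\bigr)$ in total, which falls short of $W(A)/(4(s+t+1)^3)$ as soon as $s+t\geq 13$. The fix is simply to use Tur\'an's theorem proper: the heavy graph on $N_G(b)$ is $K_{s+t}$-free, so it has at most $\bigl(1-\frac{1}{s+t-1}\bigr)\binom{d(b)}{2}$ edges, whence at least $\frac{1}{s+t-1}\binom{d(b)}{2}$ light edges in $W_G[N_G(b)]$. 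Summing over $b$ with $d(b)\geq s+t$ and dividing by the multiplicity $\binom{s+t}{2}$ gives at least
\[
\frac{1}{\binom{s+t}{2}}\cdot\frac{1}{s+t-1}\cdot\frac{W(A)}{2}=\frac{W(A)}{(s+t)(s+t-1)^2}\geq\frac{W(A)}{4(s+t+1)^3},
\]
so your argument, with this one substitution, is a complete self-contained proof that avoids citing Lemma 10 of \cite{Ja18} altogether.
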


Since the total weight of edges in $W_G$ is at least $n \binom{\d}{2}$, we have the following corollary.

\begin{corollary} \label{light}
	Let $G$ be a $K_{s,t}'$-free bipartite graph with bipartition $A\cup B$, $|B|=n$, such that $G$ is $K$-almost-regular with minimum degree $\d=\omega(n^{1/2-\frac{1}{2s}})$. Then the number of light edges in $W_G$ is at least $\frac{n{\d \choose 2}}{4(s+t+1)^3}$.
\end{corollary}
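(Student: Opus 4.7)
The plan is to deduce this as an almost immediate consequence of the preceding lemma. All that needs to be done is to verify the hypothesis $W(A)\geq 8(s+t+1)^2 n$ and to trace through the conclusion, using the lower bound $W(A)\geq n\binom{\d}{2}$ highlighted in the remark preceding the corollary.

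The first (and essentially only) real step is to obtain $W(A)\geq n\binom{\d}{2}$. I would exchange the order of summation in the definition of $W(A)$: since $W(u,v)=d_G(u,v)$ counts vertices $b\in B$ lying in $N_G(u)\cap N_G(v)$, we can instead sum over $b\in B$, counting for each such $b$ the number of pairs $\{u,v\}\s N_G(b)$. This gives the classical double-counting identity
\[
W(A) \;=\; \sum_{b\in B}\binom{d_G(b)}{2}.
\]
The minimum-degree hypothesis $d_G(b)\geq \d$ for every $b\in B$, combined with $|B|=n$, then yields $W(A)\geq n\binom{\d}{2}$ at once.

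Since $\d = \omega(n^{1/2-1/(2s)})\to \infty$ as $n\to\infty$, in particular $\binom{\d}{2}\geq 8(s+t+1)^2$ once $n$ is sufficiently large, so $W(A)\geq 8(s+t+1)^2 n$ and the hypothesis of the preceding lemma is satisfied. Applying that lemma gives a lower bound of $W(A)/(4(s+t+1)^3)$ on the number of light edges in $W_G$, and replacing $W(A)$ by the smaller quantity $n\binom{\d}{2}$ produces the claimed bound. There is no genuine obstacle: the almost-regularity assumption is not actually used in this deduction (only the minimum-degree lower bound is), and the role of the corollary is simply to repackage the preceding lemma into the degree-theoretic form that will be most convenient for the proof of Lemma~\ref{doublecount}.
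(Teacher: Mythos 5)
Your proposal is correct and matches the paper's deduction exactly: the paper likewise observes that $W(A)=\sum_{b\in B}\binom{d_G(b)}{2}\geq n\binom{\d}{2}$ and feeds this into the preceding lemma, with the hypothesis $W(A)\geq 8(s+t+1)^2n$ holding for large $n$ since $\d\to\infty$. Your remark that only the minimum-degree bound (not almost-regularity) is used here is also accurate.
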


\begin{proof}[\textnormal{\textbf{Proof of Lemma \ref{doublecount}}}]
	The proof proceeds by double counting the number of $(s+1)$-tuples $(x,u_1,\dots,u_s)\in A^{s+1}$ with the following properties:
	
	\begin{enumerate}[label=(\roman*)]
		\item Each $xu_i$ is a light edge in $W_G$.
		
		\item For any $i\neq j$, we have $N_G(x)\cap N_G(u_i)\cap N_G(u_j)=\emptyset$.
	\end{enumerate}
	 In particular, $u_i$ and $u_j$, $i\neq j$, are distinct, since otherwise (i) and (ii) contradict one another. 
    Note also that if these properties are satisfied, then $x\in N'_W(u_1,\dots,u_s)$. In fact, the same conclusion holds even if (i) does not require the edges to be light.

	For any $x\in A$, let $d_{1}(x)$ be the number of light edges adjacent to $x$ in $W_G$. Then, by Corollary~\ref{light}, we have $\sum_{x\in A} d_{1}(x)= \Omega(n\d^2)$. The number of $(s+1)$-tuples $(x,u_1,\dots,u_s)$ satisfying (i) is $\sum_{x\in A} d_{1}(x)^s\geq |A|\left(\frac{\sum_{x\in A} d_{1}(x)}{|A|}\right)^s=\Omega(n\d^{2s})$. But, of all these $(s+1)$-tuples, there are at most $s^2\cdot 2n\cdot (K\d)\cdot (K\d)^2\cdot (K^2\d^2)^{s-2}$ that do not satisfy (ii). This is because, for a fixed $i,j$, at most $2n\cdot (K\d)\cdot (K\d)^2\cdot (K^2\d^2)^{s-2}$ choices violate (ii), since there are at most $2n$ ways to choose $x$, then at most $K\d$ ways to choose an element to be in $N_G(x)\cap N_G(u_i)\cap N_G(u_j)$ and, given any such choice, there are at most $(K\d)^2$ choices for $u_i$ and $u_j$. Finally, there are at most $(K\d)^2$ choices for every other $u_k$ since the degree of $x$ in $W_G$ is at most $(K\d)^2$. Therefore, the total number of $(s+1)$-tuples satisfying (i) but not (ii) is $O(n\d^{2s-1})$, completing the proof.
\end{proof}

We now derive some consequences of the graph being $K'_{s,t}$-free.

\begin{lemma} \label{onecolour}
	Let $G$ be a $K_{s,t}'$-free bipartite graph with bipartition $A\cup B$. Suppose that for some distinct $u_1,\dots,u_s\in A$,  $d'_W(u_1,\dots,u_s)=\omega(1)$. Then there exist $b\in B, 1\leq k\leq s$ and a subset $X\s N'_W(u_1,\dots,u_s)$ consisting of at least $\frac{d'_W(u_1,\dots,u_s)}{2s^2t}$ elements such that~$X\cup \{u_k\}\subset N_G(b)$.
\end{lemma}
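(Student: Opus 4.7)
The plan is to argue by contradiction via a matching argument in an auxiliary $s$-uniform hypergraph on $B$. For each $x\in N'_W(u_1,\dots,u_s)$, the definition of $N'_W$ furnishes distinct vertices $b_1(x),\dots,b_s(x)\in B$ with $b_i(x)\in N_G(u_i)\cap N_G(x)$; fix such a choice once and for all and set $T_x:=\{b_1(x),\dots,b_s(x)\}$, which is an $s$-element subset of $B$. Let $\mathcal{F}=\{T_x : x\in N'_W(u_1,\dots,u_s)\}$.

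The key observation is that $K'_{s,t}$-freeness forces $\mathcal{F}$ to contain no matching of size $t$. Indeed, if $x_1,\dots,x_t$ are distinct elements of $N'_W(u_1,\dots,u_s)$ with pairwise disjoint $T_{x_j}$, then the $s+t$ branch vertices $u_1,\dots,u_s,x_1,\dots,x_t\in A$ together with the $st$ subdivision vertices $b_i(x_j)\in B$ form a copy of $K'_{s,t}$ in $G$: each $b_i(x_j)$ is adjacent to both $u_i$ and $x_j$, and the subdivision vertices are pairwise distinct because the $T_{x_j}$'s are disjoint and each $T_{x_j}$ itself has $s$ distinct elements. Moreover, the branch vertices lie in $A$ while the subdivision vertices lie in $B$, so there is no accidental coincidence. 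This contradicts the hypothesis, so a maximum matching in $\mathcal{F}$ has size $m\leq t-1$.

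Fix a maximum matching $T_{x_1},\dots,T_{x_m}$ and let $C=T_{x_1}\cup\cdots\cup T_{x_m}$, so that $|C|\leq sm\leq s(t-1)$. By maximality, every $T_x\in\mathcal{F}$ must intersect $C$. A first pigeonhole on $C$ then yields some $b\in C$ belonging to $T_x$ for at least $d'_W(u_1,\dots,u_s)/(s(t-1))$ values of $x$. For each such $x$, choose an index $i(x)\in[s]$ with $b_{i(x)}(x)=b$; a second pigeonhole over $[s]$ produces an index $k\in[s]$ and a set $X\subset N'_W(u_1,\dots,u_s)$ of size at least $d'_W(u_1,\dots,u_s)/(s^2(t-1))\geq d'_W(u_1,\dots,u_s)/(2s^2t)$ such that $b_k(x)=b$ for all $x\in X$. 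For these $x$, $b\in N_G(u_k)\cap N_G(x)$, so $X\cup\{u_k\}\subset N_G(b)$, as required.

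The whole argument is essentially routine pigeonholing once the matching-versus-$K'_{s,t}$ dichotomy in the second paragraph is observed, so no substantive obstacle is anticipated; the only mild bookkeeping is verifying the final inequality $1/(s^2(t-1))\geq 1/(2s^2 t)$, which is immediate for $t\geq 1$.
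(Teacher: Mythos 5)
Your proof is correct and follows essentially the same strategy as the paper's: both extract a maximal disjoint system of witness $s$-sets in $B$ (you phrase it as a maximum matching in an auxiliary hypergraph, the paper as a maximal subset of $N'_W(u_1,\dots,u_s)$ admitting pairwise distinct witnesses), bound its size by $t-1$ using $K'_{s,t}$-freeness, and then double-pigeonhole over the union of the matching and the index set $[s]$. Your version is marginally cleaner in that every $T_x$ (including those in the matching) meets $C$, so the final count does not need the $d'_W=\omega(1)$ hypothesis to absorb lower-order terms; just note that the harmless division by $t-1$ uses the standing assumption $t\geq s\geq 2$.
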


\begin{proof2}
	Pick a maximal subset $Y=\{y_1,\dots,y_r\}\s N'_W(u_1,\dots,u_s)$ with the property that there exist distinct $c_{ij}\in B$ with $c_{ij}\in N_G(u_i)\cap N_G(y_j)$ for all $1\leq i\leq s$, $1\leq j\leq r$. Since $G$ is $K_{s,t}'$-free, it follows that $r< t$. For any $x\in N'_W(u_1,\dots,u_s)\setminus Y$, there exist distinct $b_i\in B$ for $1\leq i\leq s$ such that $b_i\in N_G(x)\cap N_G(u_i)$. By the maximality of $Y$, there exist some $c(x) \in \{c_{ij}: 1\leq i\leq s,1\leq j\leq r\}$ and $1\leq k(x)\leq s$ such that $b_{k(x)}=c(x)$. By the pigeonhole principle, there exist $1\leq k\leq s$ and $b\in \{c_{ij}: 1\leq i\leq s,1\leq j\leq r\}$ such that for at least $\frac{|N'_W(u_1,\dots,u_s)\setminus Y|}{s^2r}$ choices of $x\in N'_W(u_1,\dots,u_s)\setminus Y$ we have $k(x)=k$ and $b_{k(x)}=b$. This choice for $b$ and $k$ satisfies the conclusion of the lemma.
\end{proof2}

\medskip

In the next result, $R(s,s+t)$ denotes the usual Ramsey number.

\begin{corollary} \label{cor16}
	Let $G$ be a $K_{s,t}'$-free bipartite graph with bipartition $A\cup B$. Suppose that for some distinct $u_1,\dots,u_s\in A$,  $d'_W(u_1,\dots,u_s)=\omega(1)$. Then there exist $b\in B, 1\leq k\leq s$ and $\Omega(d'_W(u_1,\dots,u_s)^s)$ $s$-sets $\{x_1,\dots,x_s\}\s N'_W(u_1,\dots,u_s)$ such that $\{x_1,\dots,x_s\}\cup \{u_k\}\s N_G(b)$ and no $x_ix_j$ is a heavy edge.
\end{corollary}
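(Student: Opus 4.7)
The plan is to combine Lemma~\ref{onecolour} with a Ramsey/averaging argument on the heavy edges.

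First, apply Lemma~\ref{onecolour} to obtain $b \in B$, an index $1 \le k \le s$, and a set $X \subseteq N'_W(u_1,\dots,u_s)$ with $|X| \ge \frac{d'_W(u_1,\dots,u_s)}{2s^2 t}$ such that $X \cup \{u_k\} \subseteq N_G(b)$. It suffices to find $\Omega(|X|^s)$ $s$-subsets of $X$ containing no heavy edge.

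The key claim is that the subgraph $H_{\mathrm{heavy}}$ of heavy edges on vertex set $X$ contains no copy of $K_{s+t}$. Suppose for contradiction that we have vertices $y_1,\dots,y_{s+t} \in X$ pairwise joined by heavy edges in $W_G$. Partition them as $S \cup T$ with $|S|=s$ and $|T|=t$; for each of the $st$ edges $y_i y_j$ of the bipartite graph $K_{s,t}$ on $S \cup T$, the common $G$-neighbourhood in $B$ has size at least $\binom{s+t}{2} \ge st$ (using $s,t \ge 2$, so $(s+t)(s+t-1) \ge 2st$). A greedy selection therefore yields pairwise distinct subdivision vertices in $B$, producing a copy of $K'_{s,t}$ in $G$ and contradicting the hypothesis.

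Now set $R = R(s, s+t)$. Since $H_{\mathrm{heavy}}$ is $K_{s+t}$-free on $X$, every $R$-subset of $X$ contains an independent $s$-set in $H_{\mathrm{heavy}}$, that is, an $s$-subset with no heavy edge. Each such $s$-subset lies in exactly $\binom{|X|-s}{R-s}$ many $R$-subsets, so double-counting gives at least
\[
\frac{\binom{|X|}{R}}{\binom{|X|-s}{R-s}} \;=\; \frac{\binom{|X|}{s}}{\binom{R}{s}}
\]
$s$-subsets of $X$ with no heavy edge, which is $\Omega(|X|^s) = \Omega(d'_W(u_1,\dots,u_s)^s)$ since $s, t$ (and hence $R$) are fixed. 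Each such $s$-set $\{x_1,\dots,x_s\}$ lies in $X \subseteq N'_W(u_1,\dots,u_s)$ and satisfies $\{x_1,\dots,x_s\} \cup \{u_k\} \subseteq N_G(b)$, as required.

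The main (minor) obstacle is verifying that a $K_{s+t}$ in heavy edges really does produce a copy of $K'_{s,t}$: one must check that there are enough common neighbours to pick the $st$ subdivision vertices distinct from one another (and automatically disjoint from $S \cup T$, which lies in $A$, since subdivision vertices live in $B$). Everything else is a clean averaging step on top of Lemma~\ref{onecolour}.
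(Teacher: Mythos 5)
Your proposal is correct and follows essentially the same route as the paper: apply Lemma~\ref{onecolour} to get $b$, $k$ and $X$, observe that the heavy edges span no $K_{s+t}$ (else $G$ would contain $K'_{s,t}$), and then average over $R(s,s+t)$-subsets of $X$ to extract $\Omega(|X|^s)$ heavy-edge-free $s$-sets. The only difference is that you spell out the greedy verification that a heavy $K_{s+t}$ yields a non-degenerate $K'_{s,t}$, which the paper leaves implicit; your check that $\binom{s+t}{2}\ge st$ suffices for the greedy selection is accurate.
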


\begin{proof2}
	Choose $b\in B$, $1\leq k\leq s$ and $X\s N_W'(u_1,\dots,u_s)$ as in the conclusion of Lemma~\ref{onecolour}. Since $G$ is $K_{s,t}'$-free, there is no $K_{s+t}$ in $W_G$ formed by heavy edges. Thus, in each subset of size $R(s,s+t)$ in $X$, there exists an $s$-set which does not span any heavy edge. By averaging over all subsets of $X$ of size $R(s,s+t)$, it follows that the number of $s$-sets in $X$ which do not span any heavy edge is at least
	\begin{align*}
	\frac{1}{\binom{|X|-s}{R(s,s+t)-s}}\binom{|X|}{R(s,s+t)}=
	    \frac{1}{{R(s,s+t) \choose s}}\cdot {|X| \choose s}.
	\end{align*}
	Since $|X|\geq d'_W(u_1,\dots,u_s)/2s^2t$, the RHS above is $\Omega(d'_W(u_1,\dots,u_s)^s)$.
\end{proof2}

\medskip

With these results in hand, we are ready to conclude the proof of Theorem~\ref{bipreduced}.

\begin{proof}[\textnormal{\textbf{Proof of Theorem \ref{bipreduced}}}]
	Suppose that $G$ does not contain a copy of $K_{s,t}'$.
	
	\smallskip
	
	\noindent \emph{Claim.} There exist distinct $x_1,\dots,x_s\in A$ such that no $x_ix_j$ is heavy and the number of $u\in A$ with $N_G(x_i)\cap N_G(u)\neq \emptyset$ for all $i$ is $\omega(\d)$.
	
	\medskip
	
	\noindent \emph{Proof of Claim.} Since $\delta=\omega(n^{1/2-\frac{1}{2s}})$, it follows that $n\d^{2s}=\omega(n^s)$. Choose a sequence $f(n)=\omega(1)$ with $n\d^{2s}=\omega(n^sf(n))$. Then, by Lemma \ref{doublecount}, we have $\sum d'_W(u_1,\dots,u_s)=\Omega(n\d^{2s})$, where the sum is over distinct $u_1,\dots,u_s\in A$ with $d'_W(u_1,\dots,u_s)\geq f(n)$. Now, by Corollary \ref{cor16}, for each such $u_1,\dots,u_s$, there exist $b\in B, 1\leq k\leq s$ and $\Omega(d'_W(u_1,\dots,u_s)^s)$ $s$-sets $\{x_1,\dots,x_s\}\s N'_W(u_1,\dots,u_s)$ such that $\{x_1,\dots,x_s\}\cup \{u_k\}\s N_G(b)$ and no $x_ix_j$ is a heavy edge. It follows by Jensen's inequality that there are $\Omega(n^s(\frac{n\d^{2s}}{n^s})^s)$ $2s$-tuples $(x_1,\dots,x_s,u_1,\dots,u_s)\in A^{2s}$ with the following properties:
	
	\begin{enumerate}[label=(\roman*)]
		\item All $x_i$ and $u_j$ are distinct.
		
		\item There exist $b\in B$ and $ k\in\{1,\dots, s\}$ such that $\{x_1,\dots,x_s\}\cup \{u_k\}\s N_G(b)$.
		
		\item For each $i,j$, $N_G(x_i)\cap N_G(u_j)\neq \emptyset$.
		
		\item No $x_ix_j$ determines a heavy edge in $W_G$.		

	\end{enumerate}
    Note that $n^s(\frac{n\d^{2s}}{n^s})^s=\omega(n\d^{2s})$, as $s>1$. However, there are at most $s\cdot n\cdot (K\d)^{s+1}$ ways to choose $k,b,x_1,\dots,x_s,u_k$ such that property (ii) holds. Thus, for at least one such choice, there are $\omega(\frac{n\d^{2s}}{n\d^{s+1}})=\omega(\d^{s-1})$ ways to extend to a suitable $2s$-tuple. The corresponding $x_1,\dots,x_s$ then satisfy the required conclusion. \qed
	
	\medskip
	
	Now take such $x_1,\dots,x_s\in A$. Since no $x_ix_j$ is heavy, we have $|\cup_{i<j} (N_G(x_i)\cap N_G(x_j))|=O(1)$, so the number of $u\in A$ such that there are $1\leq i<j\leq s$ with $N_G(x_i)\cap N_G(x_j)\cap N_G(u)\neq \emptyset$ is $O(\d)$. Thus, by the claim, there is a set $U\s A$ of $\omega(\d)$ vertices, distinct from $x_1,\dots,x_s$, such that for each $u\in U$, there are distinct $b_1,\dots,b_s\in B$ with $b_i\in N_G(x_i)\cap N_G(u)$ for all $i$. Take a maximal subset $U'=\{u_1,\dots,u_r\}\s U$ such that there exist distinct $c_{ij}\in N_G(x_i)\cap N_G(u_j)$ for all $1\leq i\leq s$, $1\leq j\leq r$. If $r\geq t$, then there is a $K_{s,t}'$ in $G$, so we have $r<t$. For any $v\in U\setminus U'$, there exist distinct $b_i\in N_G(x_i)\cap N_G(v)$. By the maximality of $U'$, we must have $b_i=c_{jk}$ for some $i,j,k$. Therefore, $v\in \cup_{1\leq j\leq s,1\leq k\leq r} N_G(c_{jk})$. So $U\setminus U'\s \cup_{1\leq j\leq s,1\leq k\leq r} N_G(c_{jk})$. But then $|U|< t+st\cdot K\d$, which contradicts $|U|=\omega(\d)$.
\end{proof}

Given Theorem \ref{subbipnew}, it is not hard to deduce Corollary \ref{subbipnewcor}. Indeed, note that $K'_{s,t}$ is the rooted $t$-blowup of $K'_{s,1}$ with the roots being the $s$ leaves. This rooted graph is balanced and bipartite with $\rho(K'_{s,1})=\frac{2s}{s+1}$, so Lemma \ref{lemmaBC} gives that $\ex(n,K'_{s,t})=\Omega(n^{2-\frac{s+1}{2s}})$ when $t$ is sufficiently large compared to $s$. Combining this with Theorem \ref{subbipnew}, Corollary \ref{subbipnewcor} follows.

\section{A short proof of a result of Kang, Kim and Liu} \label{sectionKKL}

Recall that $H_{s,1}(r)$ is the graph consisting of vertices $x_i$ ($1\leq i\leq r-1$), $y$, $z_j$ ($1\leq j\leq s$) and $w_{j,k}$ ($1\leq j\leq s, 1\leq k\leq r-1$) and edges $x_iy$ for all $i$, $yz_j$ for all $j$ and $z_jw_{j,k}$ for all $j,k$. Moreover, $H_{s,t}(r)$ is the rooted $t$-blowup of $H_{s,1}(r)$, with the roots being $\{x_i: 1\leq i\leq r-1\}\cup \{w_{j,k}: 1\leq j\leq s, 1\leq k\leq r-1\}$.

In this section, we prove Theorem \ref{theoremKKL}. 
By Lemma \ref{lemmaJS}, it suffices to prove the following.

\begin{theorem} \label{newexponents}
	Let $s,t\geq 1$ and $r\geq 2$ be fixed integers and $K\geq 1$ a constant. Suppose that $G$ is a $K$-almost-regular graph on $n$ vertices with minimum degree $\d= \omega(n^{1-\frac{s+1}{r(s+1)-1}})$. Then, for $n$ sufficiently large, $G$ contains a copy of $H_{s,t}(r)$.
\end{theorem}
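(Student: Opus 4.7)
The plan is a counting argument in the spirit of the original proof of Theorem \ref{theoremKKL} by Kang, Kim and Liu, which the paper promises to shorten. Set $p:=r(s+1)-1$, so the hypothesis reads $\delta=\omega(n^{(p-s-1)/p})$.

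First, I count ordered configurations of $H_{s,1}(r)$ in $G$: tuples $(y;x_1,\dots,x_{r-1};z_1,\dots,z_s;w_{1,1},\dots,w_{s,r-1})$ in which the $x_i$'s and $z_j$'s are distinct neighbours of $y$ and, for each $j$, the $w_{j,k}$'s are distinct neighbours of $z_j$ other than $y$. Using $K$-almost-regularity, for each vertex $y$ there are $\Omega(\delta^{r-1+s})$ choices of $x_i$'s and $z_j$'s and, for each $z_j$, $\Omega(\delta^{r-1})$ choices of $w_{j,k}$'s, so the total number of such configurations is $\Omega(n\delta^{p})$. I then group these configurations by their root tuple $R=(x_1,\dots,x_{r-1},w_{1,1},\dots,w_{s,r-1})$, of which there are at most $n^{(r-1)(s+1)}=n^{p-s}$; by averaging, some root tuple $R^*$ is extended by at least $N=\Omega(\delta^p/n^{p-s-1})$ ordered tuples $(y,z_1,\dots,z_s)$, and the hypothesis on $\delta$ forces $N\to\infty$ with $n$, so by choosing the constants carefully we may assume $N$ exceeds any prescribed threshold.

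I expect the main obstacle to be upgrading these $N$ extensions to $t$ that are pairwise disjoint on the non-root labels $\{y,z_1,\dots,z_s\}$, which, glued along the common roots $R^*$, would furnish the desired copy of $H_{s,t}(r)$. This is delicate because a single vertex of $G$ might occur as the $y$-label in up to $\prod_j|N(y)\cap N(W_j^*)|=O(\delta^s)$ extensions of $R^*$, so a naive greedy selection would require $N\gg\delta^s$ rather than merely $N=\omega(1)$. To resolve this, I would first subtract \emph{degenerate} configurations---those in which two of the labels coincide---from the Step 1 count; any such coincidence forces an extra adjacency between two vertices of $H_{s,1}(r)$, and re-running the degree count on the contracted tree loses a factor of $\delta$, so degenerate configurations number at most $O(n\delta^{p-1})=o(n\delta^p)$ and can be discarded before the averaging. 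Then, within the family of nondegenerate extensions of $R^*$, the same degree-based bookkeeping controls the number of pairs of extensions sharing a non-root vertex (each such coincidence again creates an extra adjacency and loses a factor of $\delta$), after which a Tur\'an-type argument on the conflict graph, combined with the $K$-almost-regularity of $G$, should produce the required $t$ pairwise disjoint extensions and thereby an embedded copy of $H_{s,t}(r)$.
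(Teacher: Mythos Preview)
Your counting in the first two steps is fine: there are $\Omega(n\delta^p)$ copies of $H_{s,1}(r)$, and averaging over the $n^{p-s}$ root tuples gives some $R^*$ with $N=\omega(1)$ nondegenerate extensions. The gap is in the last paragraph. The claim that a coincidence between two extensions ``creates an extra adjacency and loses a factor of $\delta$'' is simply false for coincidences at non-root vertices. If two extensions $(y,z_1,\dots,z_s)$ and $(y,z_1',\dots,z_s')$ of $R^*$ share the vertex $y$, no new edge is forced between any two labelled vertices: the merged object is still determined by choosing $y\in N_G(X^*)$ and then $z_j,z_j'\in N_G(y)\cap N_G(W_j^*)$ independently. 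In particular, for a fixed $y$ the number of extensions with that $y$ is $\prod_j |N_G(y)\cap N_G(W_j^*)|$, which can be as large as $(K\delta)^s$, so the conflict graph on your $N$ extensions can have maximum degree of order $\delta^s$. A Tur\'an-type argument then needs $N\gg\delta^s$, not merely $N=\omega(1)$, and your averaging only delivers the latter. (Counting conflicting pairs globally does not rescue this: the ``extra'' constraint, that each $w_{j,k}$ lie in $N_G(z_j)\cap N_G(z_j')$, involves a codegree and does not translate into a saved factor of $\delta$ in the worst case.)

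What is missing is exactly the device the paper introduces. One first proves (Lemma~\ref{rlight}) that in an $H_{s,t}(r)$-free $K$-almost-regular graph, all but an $o(1)$-fraction of the total $r$-edge weight sits on \emph{light} $r$-sets $S$ (those with $d_G(S)\leq L$). One then restricts to \emph{good} embeddings of $H_{s,1}(r)$, namely those in which each $\{x_1,\dots,x_{r-1},z_j\}$ and each $\{y,w_{j,1},\dots,w_{j,r-1}\}$ is light; Lemma~\ref{goodh} shows these still number $\Omega(n\delta^p)$. After averaging to a root tuple $R^*$ with $\omega(1)$ good extensions, the lightness now does the work your degree-saving claim could not: if $y$ is fixed, each $z_j\in N_G(\{y\}\cup W_j^*)$ has at most $L=O(1)$ choices, and if some $z_j$ is fixed, $y\in N_G(\{z_j\}\cup X^*)$ has at most $L=O(1)$ choices. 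Hence every vertex of the conflict graph has degree $O(1)$, and a greedy (or maximal-disjoint-family) argument yields $t$ pairwise disjoint good extensions. Without this light/heavy dichotomy, the bookkeeping you outline cannot close.
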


In what follows, let $s,t\geq 1$ and $r\geq 2$ be fixed integers and let $K\geq 1$ be a constant. Let $H=H_{s,t}(r)$. The constant $L$ will be chosen suitably in terms of $s$, $t$, $r$ and $K$, while $n$ will always be sufficiently large in terms of $s$, $t$, $r$, $K$ and $L$. As a shorthand, we will now write $d_G(S)$ for the size of the common neighbourhood $N_G(S)$ of a set $S$.

\begin{definition}
	An $r$-set $S\s V(G)$ is called an \emph{$r$-edge} if $d_G(S)>0$. The weight of $S$ is~$d_G(S)$. $S$ is called a \emph{light $r$-edge} if $1\leq d_G(S)\leq L$ and a \emph{heavy $r$-edge} if $d_G(S)>L$.
\end{definition}

\begin{lemma} \label{rlight}
     Let $G$ be an $H$-free $K$-almost-regular graph on $n$ vertices with minimum degree $\d=\omega(n^{1-\frac{1}{r-1}})$. Then the total weight on heavy $r$-edges is at most an $f_L$-proportion of the total weight of $r$-edges, where $f_L \rightarrow 0$ as $L \rightarrow \infty$.
\end{lemma}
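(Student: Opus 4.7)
The plan is to prove the contrapositive: assuming $W_L \geq f T$ for some positive constant $f$ independent of $L$, I will show that for $L$ sufficiently large (in terms of $f, s, t, r, K$) the graph $G$ must contain a copy of $H = H_{s,t}(r)$. Note that $T = \sum_v \binom{d(v)}{r} = \Theta(n\delta^r)$ by almost-regularity, and $W_L = \sum_v h(v)$, where $h(v) := |\{S \in \binom{N(v)}{r} : d_G(S) > L\}|$ counts heavy $r$-subsets of $N(v)$.

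First I would locate a rich ``sunflower core''. Since $d(v) \leq K\delta$ for every $v$, the assumption $W_L \geq fT$ and averaging over $v$ force some vertex $v$ to have a positive constant fraction of its $\binom{d(v)}{r}$ $r$-subsets heavy. A second averaging, over the $\binom{d(v)}{r-1}$ possible $(r-1)$-cores, then produces an $(r-1)$-set $X \subset N(v)$ together with a ``petal'' set $Z = \{z \in N(v)\setminus X : X \cup \{z\}\text{ is heavy}\}$ of size $\Omega(f\delta)$. Automatically $|N(X)| > L$, since $X$ lies in heavy $r$-edges, and every $z \in Z$ satisfies $|N(X) \cap N(z)| > L$.

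Next I would use this core to embed $H$. Set $y_1 = v$, use $X$ as the shared left-root set, and exploit $|N(X)| > L$ to pick further distinct center vertices $y_2, \ldots, y_t \in N(X) \setminus \{y_1\}$. The remaining task for each column $j \in [s]$ is to find $z_{j,\ell} \in N(y_\ell)$ for $\ell \in [t]$ whose joint common neighborhood contains $r-1$ vertices (playing the role of $W_j$), with all choices disjoint across columns. The cleanest way to arrange this is to reverse the selection order: for each $j$, first commit to an $(r-1)$-set $W_j$ with $|N(W_j)|$ very large -- such sets are abundant as a direct byproduct of heavy $r$-edges being abundant, since every $(r-1)$-subset of a heavy $r$-edge has common neighborhood exceeding $L$ -- then select $z_{j,\ell} \in N(y_\ell) \cap N(W_j)$. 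Since $W_j$ and each $y_\ell$ have large degrees and $L$ is large, one can choose the $z_{j,\ell}$'s distinct and disjoint from previously used vertices.

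The main obstacle I foresee will be coordinating all $s$ columns simultaneously, i.e.\ arranging that the chosen tuple $(X, y_1, \ldots, y_t)$ is compatible with $s$ pairwise-disjoint valid $W_j$'s and that each intersection $N(y_\ell) \cap N(W_j)$ really is large enough after previously-selected vertices are excluded. I expect to handle this by a degenerate-count argument in the spirit of the end of the proof of Theorem~\ref{maxdegr}: count all ``near embeddings'' -- ordered tuples $(X, y_\ell, z_{j,\ell}, W_j)$ satisfying the required adjacencies coming from the heavy structure -- and then show that the number of degenerate configurations (where some putatively-distinct vertices coincide) is dominated by the total count, so that a genuine copy of $H$ must exist once $L$ exceeds an explicit threshold $L_0 = L_0(f, s, t, r, K)$.
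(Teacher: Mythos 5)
There is a genuine gap at the embedding step. Your sunflower core supplies heavy $r$-edges only of the shape $X\cup\{z\}$ with the fixed $(r-1)$-core $X$, but embedding $H=H_{s,t}(r)$ requires, for every centre $y_\ell$ and every column $j$, a vertex $z_{j,\ell}\in N(y_\ell)\cap N(W_j)$; that is, it requires each $r$-set $\{y_\ell\}\cup W_j$ to have non-empty (indeed large) common neighbourhood. Your justification --- ``$W_j$ and each $y_\ell$ have large degrees and $L$ is large'' --- does not give this: $N(W_j)$ is a set of size $>L$ (a constant) and $N(y_\ell)$ has size roughly $\delta$, and two such sets in an $n$-vertex graph need not intersect at all. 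The abundance of $(r-1)$-sets with large common neighbourhood is likewise irrelevant, because the partner vertex completing each heavy $r$-edge is specific, not arbitrary; you would need the heavy $r$-edges to form a $K_{t,s}$-like pattern between the $y_\ell$'s and the $W_j$'s, and nothing in your averaging produces that. The deferred ``degenerate-count argument'' only controls coincidences among vertices that have already been embedded; it cannot manufacture the missing adjacencies, so the total count of ``near embeddings'' you propose to compare against could simply be zero.

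The idea you are missing, and which is how the paper proceeds, is to place all of $y_1,\dots,y_t$ \emph{and} all of $W_1,\dots,W_s$ inside the common neighbourhood $N(X)$ of the left roots (even though $H$ has no edges between the $w_{j,k}$ and the $x_i$), so that every required $r$-set $\{y_\ell\}\cup W_j$ is an $r$-subset of one fixed $m$-set in $N(X)$, where $m=t+s(r-1)$. Then $H$-freeness forces every $m$-subset of every such common neighbourhood to contain an $r$-subset of weight less than $c=|V(H)|$, which yields a lower bound of order $\eta\binom{d_G(X)}{r}/\binom{c}{r-1}$ on the number of weight-at-most-$c$ $r$-edges; comparing with the upper bound $L\binom{d_G(X)}{r}/\binom{L}{r-1}$ on the heavy weight gives the lemma for $r\ge 3$, and $r=2$ needs a separate argument since $L/\binom{L}{1}$ does not decay. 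Your contrapositive framing could in principle work, but only by locating an $m$-set inside some $N(X)$ \emph{all} of whose $r$-subsets are heavy, which your current two averaging steps do not provide. Note also that the hypothesis $\delta=\omega(n^{1-1/(r-1)})$, which your sketch never uses, is needed to ensure that typical $(r-1)$-sets have common neighbourhood of size $\omega(1)\ge m$.
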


\begin{proof2}
	First note that for any $r-1$ distinct vertices $x_1,\dots,x_{r-1}$, we cannot have $m=m_{s,t,r}=t+s(r-1)$ vertices in $N(x_1)\cap \dots \cap N(x_{r-1})$ such that any $r$ of them form an edge of weight at least $c=c_{s,t,r}=|V(H)|$, since then we could find a copy of $H$. Indeed, if there are vertices $y_i$ for $1\leq i\leq t$ and $w_{j,k}$ for $1\leq j\leq s$, $1\leq k\leq r-1$ such that $N_G(\{y_i,w_{j,1},\dots,w_{j,r-1}\})$ contains at least $c$ elements for every $i,j$, then we can choose an element $z_{i,j}$ from each of these sets such that all the $x_i,y_j,z_{k,\ell}$ and $w_{a,b}$ are distinct, yielding a copy of $H$.
    Thus, as long as $|N(x_1)\cap \dots \cap N(x_{r-1})|\geq m$, we have that in $N(x_1)\cap \dots \cap N(x_{r-1})$ the proportion of those $r$-sets with weight at most $c$ is at least $\eta=\eta_{s,t,r}=1/{m \choose r}$. Since each $r$-set in $N_G(\{x_1,\dots,x_{r-1}\})$ is clearly an $r$-edge, it follows that the total number of $r$-edges of weight at most $c$ is at least 
    \begin{align}\label{eq:light_r_edgeweight}
    \frac{1}{{c \choose r-1}}\cdot \eta \cdot \sum_{\substack{x_1\dots x_{r-1}\in {V(G) \choose r-1} \\ d_G(x_1,\dots,x_{r-1})\geq m}} {d_G(x_1,\dots,x_{r-1}) \choose r},
    \end{align}
    where we used the fact that an $r$-tuple of weight at most $c$ is in at most $\binom{c}{r-1}$ of the sets $N_G(\{x_1,\dots,x_{r-1}\})$. Note now that 
    $$\sum_{x_1\dots x_{r-1}\in {V(G) \choose r-1}} d_G(x_1,\dots,x_{r-1})\geq n{\d \choose r-1}= \Omega(n\d^{r-1}).$$
    Therefore, on average $d_G(x_1,\dots,x_{r-1})$ is $\Omega(n(\d/n)^{r-1})=\omega(1)$, so, by Jensen's inequality, we have
    \begin{align*}
    \sum_{x_1\dots x_{r-1}\in {V(G) \choose r-1}} {d_G(x_1,\dots,x_{r-1}) \choose r} & \geq 2 {|V(G)| \choose r-1} {m \choose r} \\ 
    & \geq 2\sum_{\substack{x_1\dots x_{r-1}\in {V(G) \choose r-1} \\ d_G(x_1,\dots,x_{r-1})< m}} {d_G(x_1,\dots,x_{r-1}) \choose r}.
    \end{align*}
    Thus, together with~\eqref{eq:light_r_edgeweight}, the total number of $r$-edges of weight at most $c$ (and, therefore, the total weight of $r$-edges) is at least \begin{align}\label{eq:total_r_edgeweight}
        \frac{1}{2}\cdot\frac{1}{{c \choose r-1}}\cdot \eta \cdot \sum_{x_1\dots x_{r-1}\in {V(G) \choose r-1}} {d_G(x_1,\dots,x_{r-1}) \choose r}.
    \end{align}
    On the other hand, the total weight on $r$-edges of weight at least $L$ is at most 
    \begin{align}\label{eq:heavy_r_edgeweight}
        \frac{L}{{L \choose r-1}}\cdot \sum_{x_1\dots x_{r-1}\in {V(G) \choose r-1}} {d_G(x_1,\dots,x_{r-1}) \choose r},
    \end{align} 
    since an $r$-edge of weight $w$ is in ${w \choose r-1}$ of the sets $N_G(\{x_1,\dots,x_{r-1}\})$ and $w/{w\choose r-1}$ is a non-increasing function of $w$.
    If $r\geq 3$, then $L/{L\choose r-1}\rightarrow 0$ as $L\rightarrow\infty$ and, hence, the proportion of weight on heavy edges tends to $0$ as $L$ tends to infinity.
    
    In the $r=2$ case, \eqref{eq:heavy_r_edgeweight} does not help us,
    so we take a slightly different approach. For a constant $\varepsilon > 0$, let $\xi=\frac{\e\eta}{2c}$. If $N(x_1)$ contains more than $\xi\binom{d(x_1)}{2}$ pairs of weight at least $c$, then, for $n$ sufficiently large, there exists a copy of $H$. Indeed, the vertex $x_1$ together with a copy of $K_{s,t}$ in $N(x_1)$ formed by edges of weight at least $c$ easily extend to a nondegenerate copy of $H$. Thus, for large enough~$n$ and $L=c$, the total weight on edges of weight at least $L$ is at most
    \begin{align*}
        \xi \cdot \sum_{x\in V(G)} {d_G(x) \choose 2},
    \end{align*}
    which is at most $\e$ times \eqref{eq:total_r_edgeweight}.
\end{proof2}

\medskip

We remark that we in fact proved a slightly stronger statement than Lemma~\ref{rlight}. Indeed, the proof remains valid even if we replace $H$ by the supergraph obtained by adding additional edges between the $x_i$'s and $w_{j,k}$'s, since we embedded all $w_{j,k}$ into $N_G(\{x_1,\dots,x_{r-1}\})$.

\medskip

The following definition and lemma contain the key idea in our proof. Note that we continue to abuse notation slightly by referring to the vertices of $H_{s,t}(r)$ and their embedded images in another graph $G$ by the same labels.

\begin{definition}
An embedding of $H_{s,1}(r)$ in a graph $G$ is \emph{good} if the $r$-sets $\{x_1,\dots,x_{r-1},z_i\}$ and $\{y,w_{i,1},\dots,w_{i,r-1}\}$ are light in $G$ for every $1\leq i\leq s$.
\end{definition}

\begin{lemma} \label{goodh}
	Let $G$ be an $H$-free $K$-almost-regular graph on $n$ vertices with minimum degree~$\d=\omega(n^{1-\frac{1}{r-1}})$. Then, for $L$ sufficiently large, the number of good embeddings of $H_{s,1}(r)$ in $G$ is at least $\frac{1}{2}n\d^{sr+r-1}$.
\end{lemma}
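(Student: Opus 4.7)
The plan is to approximate the number of good embeddings from below by the total homomorphism count of $H_{s,1}(r)$ into $G$, and then show that for $L$ sufficiently large only a small fraction of homomorphic tuples are ``bad'' in the sense that either some prescribed $r$-set is heavy or some prescribed $r$-set is degenerate (fewer than $r$ distinct vertices).

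First I would compute the total count. Viewing $H_{s,1}(r)$ as a tree rooted at $y$ and summing over the root, its children, and its grandchildren, the number $T$ of ordered tuples $(y, x_1, \dots, x_{r-1}, z_1, \dots, z_s, w_{1,1}, \dots, w_{s,r-1})$ with all required adjacencies satisfies
\[
T \;=\; \sum_{y \in V(G)} d(y)^{r-1}\Bigl(\sum_{z \in N(y)} d(z)^{r-1}\Bigr)^{s} \;\geq\; n\,\delta^{r-1}\cdot\delta^{rs} \;=\; n\,\delta^{sr+r-1},
\]
using $d(v)\geq \delta$ and $|N(y)|\geq \delta$ throughout.

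Next I would bound the bad tuples. For type (i), where $\{y, w_{j,1}, \dots, w_{j,r-1}\}$ is heavy for some $j$, fix $j$ and first choose the partial tuple $(y, z_j, w_{j,1}, \dots, w_{j,r-1})$: each heavy $r$-set $S$ admits $r!$ orderings specifying which element plays the role of $y$, and $z_j\in N_G(S)$ can be chosen in $d_G(S)$ ways. Thus the number of partial tuples is
\[
r!\sum_{\text{heavy } S} d_G(S) \;\leq\; r!\,f_L\sum_S d_G(S) \;=\; r!\,f_L\sum_{b\in V(G)}\binom{d(b)}{r} \;=\; O(f_L\,n\,\delta^r)
\]
by Lemma \ref{rlight} and almost-regularity. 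Each partial tuple extends to a full homomorphism in at most $(K\delta)^{r-1}\cdot (K\delta)^{r(s-1)} = (K\delta)^{sr-1}$ ways: choose $x_1, \dots, x_{r-1}\in N(y)$, and for each $j'\neq j$ a branch consisting of $z_{j'}\in N(y)$ and $w_{j',k}\in N(z_{j'})$. Hence, summing over $j$, type (i) contributes $O_{r,s,K}(f_L\,n\,\delta^{sr+r-1})$. Type (ii), where $\{x_1, \dots, x_{r-1}, z_j\}$ is heavy for some $j$, is handled by the symmetric procedure of picking a heavy $r$-set and then $y$ as a common neighbour, yielding the same bound. Finally, degenerate tuples (those in which some prescribed $r$-subset has a repeated vertex) lose at least one free factor of order $\delta$ by fixing the coincidence, so they number $O(n\,\delta^{sr+r-2}) = o(n\,\delta^{sr+r-1})$ since $\delta\to\infty$.

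Choosing $L$ large enough to drive the aggregate $f_L$ constant below $1/4$ and then $n$ large, the good embeddings number at least $(1 - \tfrac14 - o(1))\,n\,\delta^{sr+r-1} \geq \tfrac12\,n\,\delta^{sr+r-1}$, as claimed. The crux of the argument is the weighted counting in types (i) and (ii): the key structural observation is that the number of choices for the ``free'' vertex inside the heavy $r$-edge (namely $z_j$ in type (i) and $y$ in type (ii)) equals precisely the weight $d_G(S)$ of that edge, which is exactly what makes Lemma \ref{rlight} directly applicable and lets one transfer smallness of the heavy weight fraction to smallness in the homomorphism count.
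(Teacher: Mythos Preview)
Your argument is essentially the paper's own: both count $r$-stars with heavy leaf set via Lemma~\ref{rlight} (your weighted sum $\sum_{\text{heavy }S}d_G(S)$ is exactly the number of such $r$-stars) and then extend greedily to the rest of the tree at cost $(K\delta)^{sr-1}$. One small oversight: since a good \emph{embedding} must be injective, you need to subtract all non-injective homomorphisms, not merely those in which a prescribed $r$-set collapses---for instance $x_1=w_{1,1}$ is not caught by your parenthetical---but the same ``lose a factor of $\delta$'' argument disposes of these as well, so the fix is immediate.
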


\begin{proof2}
	The total weight on $r$-edges in $G$ is equal to the number of $r$-stars, which is at most $n(K\d)^r$ as $\Delta(G)\leq K\d$. Thus, Lemma \ref{rlight} implies that the number of $r$-stars whose leaf set is heavy is at most $c_Ln\d^r$, where $c_L\rightarrow 0$ as $L \rightarrow \infty$.
	
	Since $H_{s,1}(r)$ is a tree on $sr+r$ vertices and every vertex in $G$ has degree at least~$\d$, there are at least $(1-o(1))n\d^{sr+r-1}$ copies of $H_{s,1}(r)$ in $G$. By the first paragraph, $\{x_1,\dots,x_{r-1},z_1\}$ is heavy in at most $rc_Ln\d^r(K\d)^{sr-1}$ of them. Indeed, there are at most $(K\d)^{sr-1}$ ways to extend a fixed choice of $x_1,\dots,x_{r-1},y,z_1$, since $H_{s,1}(r)$ is connected and every vertex in~$G$ has degree at most $K\d$. The factor $r$ accounts for the fact that knowing the vertex set $\{x_1,\dots,x_{r-1},y,z_1\}$ of the $r$-star  leaves $r$ possibilities for $z_1$. The same holds for the other $r$-sets $\{x_1,\dots,x_{r-1},z_i\}$ and $\{y,w_{i,1},\dots,w_{i,r-1}\}$, so the number of copies of $H_{s,1}(r)$ which are not suitable is at most $2s\cdot rc_Ln\d^r(K\d)^{sr-1}=2rsc_LK^{sr-1}n\d^{sr+r-1}$. Since $c_L\rightarrow 0$ as $L \rightarrow \infty$, the result follows.
\end{proof2}

\vspace{3mm}

We are now in a position to prove Theorem~\ref{newexponents}.

\begin{proof}[\textnormal{\textbf{Proof of Theorem \ref{newexponents}}}]
	By Lemma \ref{goodh} and averaging, for sufficiently large $L$ there exist $x_i$ ($1\leq i\leq r-1$) and $w_{j,k}$ ($1 \leq j \leq s, 1 \leq k \leq r-1$) which extend to at least $\Omega(n^{1-(r-1)-s(r-1)}\d^{sr+r-1})=\omega(1)$ good embeddings of $H_{s,1}(r)$. Take a maximal set $\mathcal{M}$ of such extensions which are vertex-disjoint apart from the roots. If $\mathcal{M}$ consists of at least $t$ copies of $H_{s,1}(r)$, then their union forms a copy of $H_{s,t}(r)$.
    
    Suppose instead that $\mathcal{M}$ consists of at most $t-1$ extensions. Then any other extension has a non-root vertex which coincides with one of the non-root vertices of some $M\in \mathcal{M}$. Since there are $O(1)$ non-root vertices in the graphs $M\in \mathcal{M}$ and $O(1)$ vertices in $H_{s,1}(r)$,  there must exist some non-root vertex of $H_{s,1}(r)$ that is mapped to the same vertex in $\omega(1)$ of the good embeddings of $H_{s,1}(r)$ that extend $x_i$ ($1\leq i\leq r-1$) and $w_{j,k}$ ($1\leq j\leq s,1\leq k\leq r-1$). 
    
    Suppose first that $y$ is mapped to the same vertex in the $\omega(1)$ good copies of $H_{s,1}(r)$. Since $\{y,w_{j,1},\dots,w_{j,r-1}\}$ is light for every $j$, this leaves at most $O(1)$ possibilities for each~$z_j$, which contradicts the fact that our choice of $y$, $x_i$ and $w_{j,k}$ extend to $\omega(1)$ copies of $H_{s,1}(r)$. Similarly, suppose that some $z_j$ is mapped to the same vertex in $\omega(1)$ copies. Since $\{x_1,\dots,x_{r-1},z_j\}$ is light, this allows only $O(1)$ possibilities for $y$, which also leads to a contradiction.
\end{proof}

\section{Longer subdivisions of $K_{s,t}$} \label{sectionlongersub}

Recall that $L_{s,t}(k)$ is the $(k-1)$-subdivision of $K_{s,t}$ with an extra vertex joined to all vertices in the part of size $t$. This graph is the rooted $t$-blowup of $L_{s,1}(k)$, where $L_{s,1}(k)$ has vertices $u$, $v$, $w_{i,j}$ ($1\leq i\leq k, 1\leq j\leq s$) and edges $uv$, $vw_{1,j}$ ($1\leq j\leq s$), $w_{i,j}w_{i+1,j}$ ($1\leq i\leq k-1$, $1\leq j\leq s$), with roots $u,w_{k,1},\dots,w_{k,s}$.

In this section, we prove Theorem~\ref{longermain} and Corollary~\ref{longersubcor}. By Lemma~\ref{lemmaJS}, Theorem~\ref{longermain} reduces to the following.

\begin{theorem} \label{longersubdivisions}
	Let $s,t,k\geq 1$ be fixed integers and let $K\geq 1$ be a constant. Suppose that $G$ is a $K$-almost-regular graph on $n$ vertices with minimum degree $\d=\omega(n^{\frac{s}{sk+1}})$. Then, for $n$ sufficiently large, $G$ contains a copy of $L_{s,t}(k)$.
\end{theorem}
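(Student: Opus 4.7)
The plan is to adapt the proof strategy of Theorem~\ref{newexponents}. Define a lightness notion for pairs of vertices at various path-lengths, show (using the $L_{s,t}(k)$-free hypothesis) that "most" homomorphisms of $L_{s,1}(k)$ into $G$ are \emph{good} in the sense that all relevant pairs are light, find root images with many good extensions by averaging, and finally use a maximal vertex-disjoint family together with pigeonhole to derive a contradiction.

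In more detail, for $a,b\in V(G)$ and $\ell\ge 1$, write $W_\ell(a,b)$ for the number of walks of length $\ell$ in $G$ from $a$ to $b$, and call the triple $(a,b,\ell)$ \emph{light} if $1\le W_\ell(a,b)\le L$ and \emph{heavy} otherwise, where $L$ is a large constant to be chosen in terms of $s,t,k,K$. Setting $w_{0,j}:=v$, call a homomorphism $f:L_{s,1}(k)\to G$ \emph{good} if: (i)~the triple $(f(w_{i,j}),f(w_{k,j}),k-i)$ is light for every $0\le i\le k-1$ and $1\le j\le s$; and (ii)~the triple $(f(u),f(w_{i,j}),i+1)$ is light for every $1\le i\le k-1$ and $1\le j\le s$.

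The heart of the argument is a lightness lemma analogous to Lemma~\ref{rlight}: for $L$ sufficiently large, at most half of the homomorphisms $L_{s,1}(k)\to G$ fail to be good. One bounds, for each fixed pair of tree-positions $(x,y)$ at tree-distance $\ell$, the contribution of homomorphisms with $(f(x),f(y))$ being $\ell$-heavy to the total count. Using the $L_{s,t}(k)$-free condition, one argues that a vertex cannot have too many heavy $\ell$-partners, since otherwise a Ramsey-style selection of $s$ vertex-disjoint walk realisations meeting at a common source, together with a neighbour playing the role of $u$, would produce a copy of $L_{s,t}(k)$. Since $L_{s,1}(k)$ is a tree on $sk+2$ vertices, there are $\Theta(n\delta^{sk+1})$ homomorphisms $L_{s,1}(k)\to G$, so the lemma gives at least $c n\delta^{sk+1}$ good homomorphisms for some constant $c>0$; because any non-injective homomorphism identifies a pair of tree vertices and hence contributes $O(n\delta^{sk})$ in total (over $O(1)$ pairs), almost all good homomorphisms are injective.

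Averaging over the $O(n^{s+1})$ ordered choices of root images $(u,w_{k,1},\ldots,w_{k,s})$ produces a root tuple with $\Omega(\delta^{sk+1}/n^s)=\omega(1)$ good injective extensions, by $\delta=\omega(n^{s/(sk+1)})$. Fix such a root tuple and take a maximal family $\mathcal{M}$ of good extensions that are pairwise vertex-disjoint apart from the roots. If $|\mathcal{M}|\ge t$, the union is a copy of $L_{s,t}(k)$. Otherwise $\bigcup\mathcal{M}$ contains only $(t-1)(sk-s+1)=O(1)$ non-root vertices, so every further good extension shares a non-root vertex with some member of $\mathcal{M}$, and by pigeonhole there is a fixed non-root position $\pi$ of $L_{s,1}(k)$ and a vertex $x\in V(G)$ such that $f(\pi)=x$ in $\omega(1)$ of the good extensions.

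To derive a contradiction, I count good extensions with $f(\pi)=x$. If $\pi=v$, such an extension is determined by the $s$ walks from $v=x$ to the $w_{k,j}$, giving at most $\prod_{j=1}^{s}W_k(x,w_{k,j})\le L^s$ by~(i). If $\pi=w_{i_0,j_0}$ for some $1\le i_0\le k-1$, then for each $v\in N_G(u)$ the number of extensions is at most
\[
W_{i_0}(v,x)\cdot W_{k-i_0}(x,w_{k,j_0})\cdot\prod_{j'\ne j_0}W_k(v,w_{k,j'})\ \le\ L^s\cdot W_{i_0}(v,x)
\]
by~(i). Summing over $v\in N_G(u)$ and using the walk identity $\sum_{v\in N_G(u)}W_{i_0}(v,x)=W_{i_0+1}(u,x)\le L$ (which is~(ii) for the pair $(u,w_{i_0,j_0})$) yields at most $L^{s+1}=O(1)$ good extensions, contradicting the $\omega(1)$ bound from the previous paragraph.

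The main obstacle is establishing the lightness lemma. Unlike in Section~\ref{sectionKKL}, where a single heavy $r$-edge with its large common neighbourhood already contains an $H_{s,t}(r)$, a single heavy pair here is not enough, since $L_{s,t}(k)$ demands $s$ vertex-disjoint arms meeting at a central vertex. The lightness argument therefore has to handle the simultaneous concentration of many heavy pairs at a common source vertex and extract $s$ vertex-disjoint walk realisations before the $L_{s,t}(k)$-free hypothesis can be invoked.
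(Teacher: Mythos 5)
Your architecture matches the paper's: define a lightness/goodness notion for the pairs occurring in a copy of $L_{s,1}(k)$, show most copies are good, average to find a root tuple $(u,w_{k,1},\dots,w_{k,s})$ with $\omega(1)$ good extensions, take a maximal root-disjoint family, and use pigeonhole plus the lightness conditions to bound the number of extensions through a repeated non-root vertex. Your endgame counting (the walk identity $\sum_{v\in N_G(u)}W_{i_0}(v,x)=W_{i_0+1}(u,x)\le L$, etc.) is correct given the goodness conditions. However, the proof has a genuine gap exactly where you flag "the main obstacle": the lightness lemma is asserted, not proved, and the sketch you give for it does not work as stated. There are two distinct difficulties you do not resolve. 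First, your heaviness notion counts \emph{walks}, and $W_\ell(a,b)>L$ gives no vertex-disjoint structure: all $L+1$ walks may pass through a single internal vertex, so no Ramsey-style selection can extract $s$ disjoint arms from it. The paper needs a recursive hierarchy (admissible/good paths, with length-dependent thresholds $f(\ell,L)$ growing like an iterated power of $L$) precisely so that a pair which is "admissible but not good" is forced, via the goodness of all proper subpaths, to admit many \emph{pairwise internally vertex-disjoint} paths (Lemma~\ref{manydisjoint}). A single threshold $L$ for all lengths, as in your definition, cannot support this bottleneck argument.

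Second, even granting many internally disjoint paths of length $\ell$ between the members of a heavy pair, a concentration of heavy pairs at distance $\ell<k$ does not directly yield $L_{s,t}(k)$, whose subdivided edges are paths of length exactly $k$. Your one-sentence construction ("$s$ vertex-disjoint walk realisations meeting at a common source, together with a neighbour playing the role of $u$") produces arms of length $\ell$, so it only covers the case $\ell=k$. For $\ell<k$ one must concatenate $\ell$-length segments coming from a bipartite-like family of bad pairs into paths of length $k=j\ell+i$, while keeping everything disjoint from all previously embedded vertices. This is the content of the paper's Lemma~\ref{findstructure} (iteratively extracting $(i-1)$-subdivided $t$-stars and shrinking sets $X\supseteq X'\supseteq\cdots\supseteq X_{\mathrm{final}}$ of size $\omega(1)$) and of the subsequent embedding with the parity split on $j$; it occupies the bulk of Section~\ref{sectionlongersub} and is the genuinely hard part of the theorem. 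As it stands, your proposal establishes the (comparatively routine) outer shell of the argument but leaves its core unproven.
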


In what follows, let $s,t,k$ be fixed positive integers and let $K\geq 1$ be a constant. Let $H=L_{s,t}(k)$. As before, $L$ will be a constant to be determined in terms of $s$, $t$, $k$ and $K$, while $n$ will be sufficiently large compared to $s$, $t$, $k$, $K$ and $L$.

\begin{definition}
	Let $L$ be a positive integer. Define the function $f(\ell,L)$ for $1\leq \ell\leq k$ recursively by setting $f(1,L)=L$ and, for $2\leq \ell\leq k$,  $$f(\ell,L)=1+f(\ell-1,L)^{16}(\ell-1)^2\max_{1\leq i\leq \ell-1} f(i, L)f(\ell-i, L).$$
	Given this notation, we recursively define the notions of \emph{admissible} and \emph{good paths of length $\ell$} in a graph.
	Any path of length $1$ is both admissible and good. For $2\leq \ell\leq k$, we say a path $P=v_0v_1\dots v_\ell$ is admissible if every proper subpath of $P$ is good, i.e., $v_iv_{i+1}\dots v_j$ is good for every $(i,j)\neq (0,\ell)$. A path $P$ is good if it is admissible and the number of admissible paths of length $\ell$ between $v_0$ and $v_\ell$ is at most $f(\ell,L)$.
	In particular, a good path of length 2 connects two end vertices with at most $1+L^{18}$ common neighbours, which essentially means that the end vertices form a light edge in the sense of the definition given in Section~\ref{sectionsubbipnew} with suitably chosen parameters.
\end{definition}

The function $f(\ell, L)$ was defined so that the following lemma holds.

\begin{lemma} \label{manydisjoint}
	If a path $P=v_0\dots v_\ell$ is admissible, but not good, then there exist at least $f(\ell-1,L)^{16}$ pairwise internally vertex-disjoint admissible paths of length $\ell$ from $v_0$ to $v_\ell$.
\end{lemma}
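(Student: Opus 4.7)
The plan is to argue by contradiction. Let $\mathcal{M}$ be a maximum collection of pairwise internally vertex-disjoint admissible paths of length $\ell$ from $v_0$ to $v_\ell$, and suppose $|\mathcal{M}| < f(\ell-1, L)^{16}$. I will derive an upper bound on the total number of admissible paths of length $\ell$ from $v_0$ to $v_\ell$ that is at most $f(\ell, L)$, contradicting the assumption that $P$ is admissible but not good.

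The key step is as follows. Let $U$ be the set of all internal vertices of paths in $\mathcal{M}$, so $|U| \leq (\ell-1)|\mathcal{M}|$. By the maximality of $\mathcal{M}$, every admissible path of length $\ell$ from $v_0$ to $v_\ell$ must pass through some vertex of $U$ (trivially for the paths in $\mathcal{M}$ themselves, and otherwise by the defining property of a maximal internally-disjoint collection). Hence any such path can be specified by choosing (i) a vertex $w \in U$ lying on the path, (ii) the position $i \in \{1,\dots,\ell-1\}$ of $w$, (iii) a length-$i$ subpath from $v_0$ to $w$, and (iv) a length-$(\ell-i)$ subpath from $w$ to $v_\ell$. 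Because the ambient length-$\ell$ path is admissible, both of these subpaths are \emph{proper} subpaths of an admissible path, hence good by the definition of admissibility; therefore, by the definition of good, the numbers of options for (iii) and (iv) are bounded by $f(i,L)$ and $f(\ell-i,L)$, respectively.

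Multiplying these counts gives at most
\[
(\ell-1)\,|\mathcal{M}| \cdot (\ell-1) \cdot \max_{1 \leq i \leq \ell-1} f(i,L)\,f(\ell-i,L) < f(\ell-1,L)^{16}(\ell-1)^2 \max_{1 \leq i \leq \ell-1} f(i,L)\,f(\ell-i,L) = f(\ell,L) - 1
\]
admissible paths of length $\ell$ from $v_0$ to $v_\ell$, contradicting the fact that $P$ is not good. There is no real obstacle here; the only point that needs care is confirming that admissibility of the length-$\ell$ path forces \emph{both} decomposition subpaths to be good (and not just admissible), which is immediate because the two subpaths are proper subpaths of an admissible path. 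Once this is noted, the counting is a purely routine unfolding of the recursive definition of $f(\ell,L)$.
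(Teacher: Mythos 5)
Your proof is correct and follows essentially the same route as the paper: take a maximal internally disjoint family, use its maximality to force every admissible length-$\ell$ path through an internal vertex of the family, and then bound the count via the goodness of the two proper subpaths, contradicting the recursive choice of $f(\ell,L)$. The only cosmetic difference is that you sum the bound over all pairs $(w,i)$ directly, whereas the paper applies pigeonhole to isolate a single pair $(w,i)$ carrying too many paths; the arithmetic is identical.
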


\begin{proof2}
	Choose a maximal set of pairwise internally vertex-disjoint admissible paths of length $\ell$ from $v_0$ to $v_\ell$. Call them $Q_1,\dots,Q_r$ and assume that $r<f(\ell-1,L)^{16}$. Every admissible path of length $\ell$ from $v_0$ to $v_\ell$ meets one of the paths $Q_1,\dots,Q_r$ at some vertex other than $v_0$ and $v_\ell$. But $P$ is not good, so there are at least $f(\ell,L)$ such paths. By the pigeonhole principle, it follows that there exist a vertex $w$ and some $1\leq i\leq \ell-1$ such that there are at least $\frac{f(\ell,L)}{f(\ell-1,L)^{16}(\ell-1)^2}$ admissible paths $x_0x_1\dots x_\ell$ with $x_0=v_0, x_i=w, x_\ell=v_\ell$. But $\frac{f(\ell,L)}{f(\ell-1,L)^{16}(\ell-1)^2}>f(i, L)f(\ell-i, L)$, so either there are more than $f(i, L)$ good paths of length $i$ from $v_0$ to $w$ or there are more than $f(\ell-i, L)$ good paths of length $\ell-i$ from $w$ to $v_\ell$. In either case we contradict the definition of a good path.
\end{proof2}

\medskip

Theorem \ref{longersubdivisions} will follow fairly easily from the next lemma, which says that for large enough $L$ only a small proportion of all paths of length $k$ are not good.

\begin{lemma} \label{fewnotgood}
	Let $G$ be an $H$-free $K$-almost-regular graph on $n$ vertices with minimum degree $\d=\omega(1)$. Then the number of paths of length $k$ which are not good is at most $c_Ln\d^k$, where $c_L\rightarrow 0$ as $L\rightarrow \infty$.
\end{lemma}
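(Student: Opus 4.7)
The plan is to prove by strong induction on $\ell$ from $1$ to $k$ that the number of paths of length $\ell$ which are not good is at most $c_{\ell,L}\, n\delta^\ell$ for some constant $c_{\ell,L}$ (depending on $s,t,k,K$) with $c_{\ell,L} \to 0$ as $L \to \infty$. The lemma is then the $\ell = k$ case, and the base $\ell = 1$ is immediate since every length-$1$ path is good.

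In the inductive step I split a not-good path $P$ of length $\ell$ into two families. If $P$ is not admissible, then some proper subpath of length $m < \ell$ is not good; for each of the $O(\ell^2)$ possible subpath positions the inductive hypothesis supplies at most $c_{m,L}\, n\delta^m$ choices for such a subpath, and each extends to a length-$\ell$ path in at most $(K\delta)^{\ell - m}$ ways (using $\Delta(G) \le K\delta$). Summing yields the desired bound $c^{(A)}_{\ell,L}\, n\delta^\ell$ with $c^{(A)}_{\ell,L} \to 0$. If instead $P$ is admissible but not good, Lemma \ref{manydisjoint} gives $N := f(\ell-1,L)^{16}$ pairwise internally vertex-disjoint admissible paths of length $\ell$ between the endpoints of $P$, so the admissible-but-not-good paths of length $\ell$ are exactly the admissible paths between such \emph{bad} pairs. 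I would bound them by leveraging the $H$-freeness of $G$ (with $H = L_{s,t}(k)$): too many admissible paths supported on bad pairs would let us extract a copy of $L_{s,t}(k)$, namely a vertex $u$ together with $t$ neighbors $v^{(a)}$ and $s$ terminals $w_{k,j}$ joined by $st$ internally vertex-disjoint admissible paths of length $k$.

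The main obstacle is precisely this assembly step, where one must combine many scattered bundles of internally disjoint length-$\ell$ admissible paths into a single globally consistent copy of $L_{s,t}(k)$ with all $st$ paths mutually internally disjoint. I expect this to require an iterated averaging argument: first locate an apex $u$ in whose neighborhood many bundles concentrate, then greedily pick $t$ sources among $N(u)$ and a shared set of $s$ terminals. The inductive control on not-good subpaths of smaller length should rule out the degenerate overlaps arising during the greedy selection, while the rapid growth of $f(\ell,L)$ built into its recursive definition supplies enough slack that each greedy step succeeds.
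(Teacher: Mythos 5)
Your outer decomposition is essentially the paper's: every not-good path of length $k$ contains a minimal not-good subpath, which is automatically admissible-but-not-good, and one multiplies the count of such subpaths of each length $\ell$ by the at most $(K\delta)^{k-\ell}$ extensions. (Your explicit induction on $\ell$ is a harmless repackaging of this.) But the entire substance of the lemma lives in the step you defer: showing that the number of admissible-but-not-good paths of length $\ell$ is $o_L(n\delta^{\ell})$, which the paper obtains as Corollary~\ref{admissiblenotgood} via a delicate argument (fix the second vertex $v$, set $Y=\Gamma_{\ell-1}(v)$, extract a large set $X\subset N(v)$ of vertices with many $\ell$-bad partners in $Y$, and iterate Lemma~\ref{findstructure} to assemble a copy of $H$). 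You explicitly name this assembly as ``the main obstacle'' and offer only an expectation that an iterated averaging argument will work, so the proof is not complete.

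Moreover, the one concrete thing you do say about the assembly is only right in the special case $\ell=k$: you propose to build $L_{s,t}(k)$ from an apex, $t$ neighbours and $s$ terminals joined by $st$ internally disjoint admissible paths \emph{of length $k$}. For $\ell<k$, an $\ell$-bad pair only supplies internally disjoint paths of length $\ell$, so each length-$k$ path of the subdivision must be stitched together from $j=\lfloor k/\ell\rfloor$ hops across $\ell$-bad pairs plus a residual segment of length $i$ with $k=j\ell+i$. This is exactly why the paper needs the $(i-1)$-subdivided $t$-stars $T_1,\dots,T_s$, the auxiliary set $U$, the alternation of intermediate vertices between $X_{\mathrm{final}}$ and $U$ according to the parity of $j$, and the disjointness bookkeeping via the set $Z$ in Lemma~\ref{findstructure}. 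None of this is visible in your sketch, and without it the ``greedy selection'' you describe has no mechanism to produce the required internally disjoint length-$k$ paths when $\ell<k$. So there is a genuine gap: the key counting bound for admissible-but-not-good paths, and in particular the $H$-extraction for $\ell<k$, is asserted rather than proved.
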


Using this result, we may prove the analogue of Lemma~\ref{goodh} for this setting.

\begin{lemma} \label{goodl}
	Let $G$ be an $H$-free $K$-almost-regular graph on $n$ vertices with minimum degree $\d=\omega(1)$. Then, for $L$ sufficiently large, the number of copies of $L_{s,1}(k)$ for which the paths $uvw_{1,j}\dots w_{k-1,j}$ and $vw_{1,j}\dots w_{k,j}$ with $1\leq j\leq s$ are all good is at least $\frac{1}{2}n\d^{sk+1}$.
\end{lemma}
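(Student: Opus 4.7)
My plan closely mirrors the proof of Lemma~\ref{goodh}: I count all (proper) labelled embeddings of $L_{s,1}(k)$ into $G$, then use Lemma~\ref{fewnotgood} together with a union bound to subtract those in which some distinguished path is not good.

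The total count is easy. Since $L_{s,1}(k)$ is a tree on $sk+2$ vertices with $sk+1$ edges and $G$ has minimum degree $\delta=\omega(1)$, a greedy embedding (pick $u$, then a neighbour $v$, and so on) gives at least $n\delta^{sk+1}$ ordered tree-walks, while the number of degenerate walks (where two labels collide) is $O(n\delta^{sk})=o(n\delta^{sk+1})$ by a standard inclusion-exclusion on the coincident pair. So the number of proper labelled copies of $L_{s,1}(k)$ in $G$ is at least $(1-o(1))\,n\delta^{sk+1}$.

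Next, I bound the number of labelled copies in which a fixed one of the $2s$ distinguished paths---either $u v w_{1,j}\cdots w_{k-1,j}$ or $v w_{1,j}\cdots w_{k,j}$---is not good. By Lemma~\ref{fewnotgood}, the number of ordered paths of length $k$ in $G$ that are not good is at most $c_L n\delta^k$, where $c_L\to 0$ as $L\to\infty$. Once such a length-$k$ path is fixed, it occupies $k+1$ of the $sk+2$ vertices of $L_{s,1}(k)$; the remaining $(s-1)k+1$ vertices to be embedded (namely the missing endpoint of the $j$-th branch, together with the $k$ vertices on each of the $s-1$ other branches) each extend an already-embedded neighbour, so, since $\Delta(G)\le K\delta$, there are at most $(K\delta)^{(s-1)k+1}$ completions. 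A union bound over the $2s$ distinguished paths yields at most
\[
2s\, c_L\, K^{(s-1)k+1}\, n\delta^{sk+1}
\]
bad labelled copies.

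Choosing $L$ large enough so that $2s c_L K^{(s-1)k+1}\le 1/4$, the number of good labelled copies is at least $(1-o(1))\,n\delta^{sk+1} - \tfrac14 n\delta^{sk+1} \ge \tfrac12 n\delta^{sk+1}$ for $n$ sufficiently large, giving the claim. The only step that needs any checking is the extension bound: one must verify that after deleting a length-$k$ path from $L_{s,1}(k)$ the residual graph is a forest, each of whose components is rooted at a vertex of the removed path, so that the naive $(K\delta)^{(s-1)k+1}$ bound is indeed valid. This is immediate from the tree structure of $L_{s,1}(k)$, so the proof is essentially bookkeeping once Lemma~\ref{fewnotgood} is in hand.
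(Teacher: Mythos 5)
Your proposal is correct and follows essentially the same route as the paper: count all $(1-o(1))n\delta^{sk+1}$ copies of the tree $L_{s,1}(k)$ greedily, then for each of the $2s$ distinguished paths apply Lemma~\ref{fewnotgood} and the $(K\delta)^{(s-1)k+1}$ extension bound, and take $L$ large so the union bound is dominated. The only (immaterial) difference is a factor of $2$ per path that the paper includes to pass from unordered not-good paths to ordered labelled tuples; since $c_L\to 0$ this does not affect the conclusion.
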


\begin{proof2}
	Since $L_{s,1}(k)$ is a tree on $sk+2$ vertices and every vertex in $G$ has degree at least $\d$, there are at least $(1-o(1))n\d^{sk+1}$ copies of $L_{s,1}(k)$ in $G$. By Lemma \ref{fewnotgood}, at most $2c_Ln\d^k(K\d)^{(s-1)k+1}$ of them contain not good paths labelled by $uvw_{1,1}\dots w_{k-1,1}$. Indeed, there are at most $(K\d)^{(s-1)k+1}$ ways to extend a fixed choice of $u,v,w_{1,1},\dots,w_{k-1,1}$, since $L_{s,1}(k)$ is connected and every vertex in $G$ has degree at most $K\d$. The factor 2 accounts for the fact that knowing the path $uvw_{1,1}\dots,w_{k-1,1}$ leaves two possibilities for $(u,v,w_{1,1},\dots,w_{k-1,1})$. The same holds for the other paths, so the number of copies of $L_{s,1}(k)$ which are not suitable is at most $2s\cdot 2c_Ln\d^k(K\d)^{(s-1)k+1}=4sc_LK^{(s-1)k+1}n\d^{sk+1}$. Since $c_L\rightarrow 0$ as $L\rightarrow \infty$, the result follows.
\end{proof2}

\medskip

Before proving Lemma~\ref{fewnotgood}, we show how to conclude the proof of Theorem~\ref{longersubdivisions}.

\begin{proof}[\textnormal{\textbf{Proof of Theorem \ref{longersubdivisions}}}]
	Suppose for contradiction that $G$ is $H$-free. By Lemma~\ref{goodl}, if $L$ is sufficiently large, then there are distinct vertices $u,w_{k,1},\dots,w_{k,s}$ such that the number of ways to extend them to a copy of $L_{s,1}(k)$ in $G$ is at least $\frac{1}{n^{s+1}}\cdot \frac{1}{2}n\d^{sk+1}=\omega(1)$. Suppose that no $t$ of these are pairwise vertex-disjoint apart from the roots $u,w_{k,1},\dots,w_{k,s}$. Then, as in the proof of Theorem \ref{newexponents}, either $v$ or some $w_{i,j}$ ($1\leq i\leq k-1$, $1\leq j\leq s$) is mapped to the same vertex at least $\omega(1)$ times.
	
	Suppose first that it is $v$. Then, since $vw_{1,j}\dots w_{k,j}$ is good for each $j$, it follows that in these copies of $L_{s,1}(k)$, each tuple $(w_{1,j},\dots,w_{k-1,j})$ can take at most $f(k,L)=O(1)$ values, which 
	contradicts the assumption that our choice of $u$, $v$ and $w_{k,j}$ extend to $\omega(1)$ copies of~$L_{s,1}(k)$.
	Suppose now that some $w_{i,j}$ ($1\leq i\leq k-1$, $1\leq j\leq s$) is mapped to the same vertex $\omega(1)$ times. Then, since $uvw_{1,j}\dots w_{i,j}$ is a good path, there are only $O(1)$ possibilities for $v$. However, as we have just seen, once $u,v,w_{k,1},\dots,w_{k,s}$ are fixed, there are only $O(1)$ ways to extend them to a copy of $L_{s,1}(k)$. Hence, the fixed embedding of $u$, $w_{k,1},\dots,w_{k,s}$ and $w_{i,j}$ only extends to $O(1)$ copies of $L_{s,1}(k)$, which is again a contradiction.
	Thus, there must be at least $t$ copies of $L_{s,1}(k)$ extending $u,w_{k,1},\dots,w_{k,s}$ which are vertex-disjoint apart from the roots. That is, $G$ contains a copy of $L_{s,t}(k)$.
\end{proof}

It remains to prove Lemma \ref{fewnotgood}. We will need the following definition.

\begin{definition}
	A pair of distinct vertices $\{x, y\}$ in $G$ is said to be $\ell$-\emph{bad} for some $2 \leq \ell \leq k$ if there are at least $f(\ell-1,L)^{16}$ internally vertex-disjoint admissible paths of length $\ell$ from $x$ to $y$. In particular, Lemma~\ref{manydisjoint} implies that if there is an admissible, but not good, path of length $\ell$ from $x$ to $y$, then $\{x,y\}$ is $\ell$-bad.
\end{definition}

In what follows, for $v\in V(G)$, we shall write $\Gamma_i(v)$ for the set of vertices $u\in V(G)$ for which there exists a path of length $i$ from $v$ to $u$. The next lemma will be used to show that in an $H$-free graph there cannot be many bad pairs between $N(v)=\Gamma_1(v)$ and $\Gamma_{\ell-1}(v)$. We will take a suitable $X\s N(v)$, $Y=\Gamma_{\ell-1}(v)$ and repeatedly apply the lemma to obtain subdivided $t$-stars. At the end, we piece these together to form a copy of $H$. To make sure that this is nondegenerate, the set $Z$ of vertices that we have already used will be avoided.

\begin{lemma} \label{findstructure}
	Let $2\leq \ell\leq k$ and $1\leq i\leq \ell$. Let $G$ be a $K$-almost-regular graph on $n$ vertices with minimum degree $\d=\omega(1)$. Let $X,Y,Z\s V(G)$ be such that $|X|=\omega(1),|Z|\leq L^{1/10},|Y|\geq \frac{\d^{\ell-1}}{f(\ell-1,L)^2}$ and, for any $x\in X$, the number of $y\in Y$ such that $(x,y)$ is $\ell$-bad is as at least $\frac{|Y|}{f(\ell-1,L)^2}$. Then, provided that $L$ is sufficiently large compared to $k$, $t$ and $K$, there exist an $(i-1)$-subdivided $t$-star in $G$, disjoint from $Z$, whose leaves form a set $R\s Y$, and a subset $X'\s X$ such that $|X'|=\omega(1)$ and $(x',r)$ is $\ell$-bad for every $x'\in X'$ and $r\in R$.
\end{lemma}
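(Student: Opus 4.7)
I would prove the lemma by downward induction on $i$, from $i=\ell$ down to $i=1$. The main combinatorial tools are dependent random choice on the bipartite graph of $\ell$-bad pairs, and the abundance of internally vertex-disjoint admissible paths guaranteed by $\ell$-badness.

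As a common preparatory step, independent of $i$, I would apply a dependent random choice to the bipartite ``$\ell$-bad'' graph $B$ between $X$ and $Y$. Sampling $s'=\omega(1)$ vertices $x_1,\ldots,x_{s'}\in X$ uniformly at random, with $s'$ growing slowly enough that $|Y|/f(\ell-1,L)^{2s'}$ remains $\omega(1)$, convexity of $z\mapsto z^{s'}$ yields expected common $B$-neighborhood of size at least $|Y|/f(\ell-1,L)^{2s'}$. Fixing such a sample produces $X_0=\{x_1,\ldots,x_{s'}\}$ with $|X_0|=\omega(1)$ and a common bad-neighborhood $R_0\subseteq Y$ with $|R_0\setminus Z|=\omega(1)$, such that every pair $(x_j,r)$ with $x_j\in X_0$ and $r\in R_0$ is $\ell$-bad.

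For the base case $i=\ell$, pick $c\in X_0\setminus Z$ (possible since $|X_0|\gg|Z|$) and any $t$ distinct vertices $r_1,\ldots,r_t\in R_0\setminus Z$. For each $r_j$, $\ell$-badness of $(c,r_j)$ provides at least $f(\ell-1,L)^{16}$ internally vertex-disjoint admissible paths of length $\ell$ from $c$ to $r_j$. I would greedily select one such path per $r_j$, avoiding $Z$, the other $r_{j'}$, and the internal vertices used by previously chosen arms; since the forbidden set has size $O(L^{1/10}+t\ell)=O(1)$ and internally disjoint paths can each contain at most one forbidden vertex, at most that many paths are blocked, leaving plenty available when $L$ is large. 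Setting $X'=X_0\setminus(\{c\}\cup Z)$, of size $\omega(1)$, completes the base case.

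For the inductive step from $i$ to $i-1$, the strategy is to shorten the arms of the $(i-1)$-subdivided $t$-star by one step. Since arm length $i-1<\ell$ no longer matches the bad-path length $\ell$ directly, I would invoke the lemma at parameter $i$ applied to an auxiliary set $Y^*$ consisting of vertices reachable from $Y$ by admissible-path extensions of length one. Arms of length $i$ ending in $Y^*$ then correspond to arms of length $i-1$ ending in $Y$; an averaging argument over the choice of extension recovers a compatible $X'\subseteq X$ of size $\omega(1)$, with the multiplicative loss absorbed by the slack provided by the exponent $16$ in $f(\ell-1,L)^{16}$ and the freedom to take $L$ large compared to $k$, $t$, $K$. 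The main obstacle will be coordinating this shortening across all $t$ arms simultaneously while preserving $\ell$-badness of the new leaves with respect to the surviving subset of $X_0$; this is where the precise form of the recursive definition of $f$ becomes essential.
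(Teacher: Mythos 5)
Your base case ($i=\ell$) is essentially sound and in fact takes a slightly different route from the paper: you use dependent random choice on the bad graph to extract a complete bipartite bad structure $X_0\times R_0$ up front, whereas the paper works with a single high-degree vertex $x^*\in X$ and only intersects the bad-neighbourhoods of the $t$ chosen leaves at the very end. Either works for $i=\ell$, where the arms of the star can literally be the length-$\ell$ admissible paths witnessing badness.

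The inductive step for $i<\ell$, however, has a genuine gap, and it is exactly the point you flag as ``the main obstacle.'' Your plan is to truncate the arms of a star with arms of length $i$ to get arms of length $i-1$, routing through an auxiliary set $Y^*$ of one-step extensions of $Y$. This cannot work as described: the hypothesis of the lemma only provides $\ell$-badness for pairs in $X\times Y$, so there is no reason the lemma's hypotheses hold with $Y^*$ in place of $Y$; and after truncation the new leaves are penultimate vertices of the old arms, which need not lie in $Y$ and are not known to be $\ell$-bad with any large subset of $X$. Since both conclusions of the lemma (leaves in $Y$, and $\ell$-badness of every leaf with every element of $X'$) are about the leaves themselves, losing control of the leaves breaks the statement, and the exponent $16$ in $f(\ell-1,L)^{16}$ provides no help with this. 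The missing idea is the paper's pigeonholing on initial segments: one fixes a single $x^*\in X$ with $\Omega\bigl(f(\ell-1,L)^{9}\d^{\ell-1}\bigr)$ admissible length-$\ell$ paths into the set $Y'$ of vertices of $Y$ having many bad partners in $X$, discards the few meeting $Z$, and groups the rest by their first $\ell-i$ edges; some vertex $u\in\Gamma_{\ell-i}(x^*)$ then carries $\Omega\bigl(f(\ell-1,L)^{8}\d^{i-1}/K^{\ell-i}\bigr)$ length-$i$ tails ending in $Y'$, from which one greedily extracts $t$ tails that are pairwise disjoint except at $u$. Their union is the required $(i-1)$-subdivided $t$-star with centre $u$, and since each of its $t$ leaves lies in $Y'$, an averaging (Jensen) argument over $X$ produces $X'\s X$ of size $|X|/(4f(\ell-1,L)^2)^t=\omega(1)$ that is $\ell$-bad with all $t$ leaves simultaneously. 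Without some device of this kind that produces arms of length exactly $i$ while keeping the leaves inside $Y'$, the case $i<\ell$ does not follow from the case $i=\ell$.
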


\begin{proof2}
	First note that we may assume $X\cap Z=\emptyset$. Let $Y'$ be the set of those $y\in Y$ for which the number of $x\in X$ such that $(x,y)$ is $\ell$-bad is at least $\frac{|X|}{2f(\ell-1,L)^2}$. Then $|Y'|\geq \frac{|Y|}{2f(\ell-1,L)^2}$ and the number of $(x,y)\in X\times Y'$ which are $\ell$-bad is at least $\frac{|X||Y'|}{2f(\ell-1,L)^2}\geq \frac{|X||Y|}{4f(\ell-1,L)^4}\geq \frac{|X|\d^{\ell-1}}{4f(\ell-1,L)^6}$. Thus, there are at least $\frac{|X|\d^{\ell-1}}{4f(\ell-1,L)^6}\cdot f(\ell-1,L)^{16}\geq |X|f(\ell-1,L)^9\d^{\ell-1}$ paths of length $\ell$ starting in $X$ and ending in $Y'$. In particular, there exists some $x^*\in X$ such that there are at least $f(\ell-1,L)^9\d^{\ell-1}$ paths starting at $x^*$ and ending in $Y'$.
    
    The number of such paths intersecting $Z$ is at most $|Z|\ell(K\d)^{\ell-1}$. Indeed, there are at most $|Z|$ choices for the element of $Z$ in the path, at most $\ell$ choices for its position in the path and, given a fixed choice for these, at most $(K\d)^{\ell-1}$ choices for the other $\ell-1$ vertices in the path. (Note that as $X\cap Z=\emptyset$, the vertex in $Z$ is not $x^*$.) But $|Z|\ell(K\d)^{\ell-1}\leq L^{1/10}\ell K^{\ell-1}\d^{\ell-1}$, so, for $L$ sufficiently large, using the fact that $f(\ell-1,L)\geq L$, there are at least $f(\ell-1,L)^8\d^{\ell-1}$ paths of length $\ell$ starting at $x^*$ and ending in $Y'$ that avoid $Z$. Moreover, there are at most $(K\d)^{\ell-i}$ different initial segments of length $\ell-i$ for these paths, so, by the pigeonhole principle, there exist $\frac{f(\ell-1,L)^8\d^{i-1}}{K^{\ell-i}}$ of them which start with the same $\ell-i$ edges. It follows that there exists some $u\in \Gamma_{\ell-i}(x^*)$ such that there are at least $\frac{f(\ell-1,L)^8\d^{i-1}}{K^{\ell-i}}$ paths of length $i$ from $u$ to $Y'$, all disjoint from $Z$.
    
    Take now a maximal set of such paths which are pairwise vertex-disjoint apart from at~$u$. We claim that there are at least $f(\ell-1,L)^7$ such paths. Suppose otherwise. Then all the $\frac{f(\ell-1,L)^8\d^{i-1}}{K^{\ell-i}}$ paths of length $i$ from $u$ to $Y'$ intersect a certain set of size at most $if(\ell-1,L)^7$ not containing $u$. But there are at most $(if(\ell-1,L)^7)\cdot i\cdot (K\d)^{i-1}$ such paths, which is a contradiction for $L$ sufficiently large.
    
    So we have $r\geq f(\ell-1,L)^7$ paths $P_1,\dots,P_r$ of length $i$ from $u$ to $Y'$ which are pairwise vertex-disjoint except at $u$ and avoid $Z$. Let the endpoints of these paths be $y_1,\dots,y_r$. Since $y_j\in Y'$ for all $j$, the number of pairs $(x,y_j)$ with $x\in X$ which are $\ell$-bad is at least $\frac{r|X|}{2f(\ell-1,L)^2}$. Therefore, by Jensen's inequality, on average an $x\in X$ has at least ${r/2f(\ell-1,L)^2 \choose t}$ $t$-sets $\{y_{j_1},\dots,y_{j_t}\}$ such that all $(x,y_{j_q})$ are $\ell$-bad. Since ${r/2f(\ell-1,L)^2 \choose t}\geq (\frac{1}{4f(\ell-1,L)^2})^t{r \choose t}$, there exists a $t$-set $\{y_{j_1},\dots,y_{j_t}\}\s \{y_1,\dots,y_r\}$ such that the set $$X'=\{x\in X: (x,y_{j_q}) \text{ is } \ell\text{-bad for all } 1\leq q\leq t\}$$ has size at least $|X|/(4f(\ell-1,L)^2)^t=\omega(1)$. We can now take $R=\{y_{j_1},\dots,y_{j_t}\}$ and the union of the paths $P_{j_1},\dots,P_{j_t}$ is a suitable $(i-1)$-subdivided $t$-star.
\end{proof2}

\medskip

We now iterate Lemma~\ref{findstructure}, as promised, to find a copy of $H$.

\begin{lemma}
	Let $G$ be an $H$-free $K$-almost-regular graph on $n$ vertices with minimum degree~$\d=\omega(1)$. Then, provided that $L$ is sufficiently large compared to $s$, $t$, $k$ and $K$, for any $2\leq \ell\leq k$ and any $v\in V(G)$, the number of admissible, but not good, paths of the form $v_0vv_2v_3\dots v_\ell$ is at most $\frac{2(K\d)^\ell}{f(\ell-1,L)}$.
\end{lemma}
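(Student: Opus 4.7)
I would proceed by contradiction: suppose the number $A$ of admissible but not good paths of the form $v_0 v v_2 \dots v_\ell$ exceeds $\tfrac{2(K\d)^\ell}{f(\ell-1,L)}$. The first move is to convert this excess into an abundance of $\ell$-bad pairs. For any such admissible but not good path $P$, admissibility forces the proper subpath $v v_2 \dots v_\ell$ of length $\ell-1$ to be good, so there are at most $f(\ell-1,L)$ admissible paths of length $\ell-1$ from $v$ to $v_\ell$, and in particular at most $f(\ell-1,L)$ admissible $P$ for any fixed endpoint pair $(v_0,v_\ell)$. Meanwhile, Lemma~\ref{manydisjoint} ensures $\{v_0,v_\ell\}$ is $\ell$-bad. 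Dividing, I obtain at least $\tfrac{2(K\d)^\ell}{f(\ell-1,L)^2}$ $\ell$-bad pairs $(v_0,v_\ell)\in N(v)\times\Gamma_{\ell-1}(v)$, each realised by some admissible $P$.

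Next, I would set $X=N(v)$ and let $Y$ be the set of second coordinates arising. Using the crude bounds $|X|\leq K\d$ and $|Y|\leq (K\d)^{\ell-1}$ together with the lower bound above, a standard averaging argument yields subsets $X'\subseteq X$ and $Y'\subseteq Y$ satisfying the hypotheses of Lemma~\ref{findstructure}: $|X'|=\omega(1)$, $|Y'|\geq \d^{\ell-1}/f(\ell-1,L)^2$, and every $x\in X'$ has at least $|Y'|/f(\ell-1,L)^2$ $\ell$-bad partners in $Y'$. I would then apply Lemma~\ref{findstructure} a total of $s$ times, with $i=\ell$ throughout and a running set $Z$ of previously used vertices that is enlarged after each application. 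Since each application contributes only $O_{s,t,k}(1)$ vertices to $Z$, the requirement $|Z|\leq L^{1/10}$ is maintained for $L$ sufficiently large, and after $s$ iterations I obtain $s$ vertex-disjoint subdivided $t$-stars together with a surviving subset $X^*\subseteq X'$ of size $\omega(1)$ such that every pair $(x^*,r)\in X^*\times(\text{leaves of all $s$ stars})$ is $\ell$-bad. Attaching these stars to $v$ through a fixed $x^*\in X^*$ and using the $\ell$-bad property to route the required internally disjoint paths to the leaves, I read off a copy of $H=L_{s,t}(k)$, contradicting $H$-freeness.

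The main obstacle is ensuring that the outputs of the $s$ iterations of Lemma~\ref{findstructure} can be assembled consistently into $L_{s,t}(k)$: both the $t$-blowup along each arm (which demands $t$ internally disjoint paths of length $\ell$ per $\ell$-bad pair) and the sharing of the $s$ outer leaves across all $t$ blow-up copies must be arranged simultaneously. The $\ell$-bad guarantee supplies the abundant internally disjoint paths needed for the $t$-blowup, and the fact that $|X^*|=\omega(1)$ after $s$ iterations lets one absorb the remaining vertex-avoidance constraints. The bookkeeping is delicate but essentially routine once Lemma~\ref{findstructure} is applied with the correct parameters; the super-polynomial growth of $f(\ell-1,L)$ in $L$ is what makes all the polynomial losses in this argument affordable.
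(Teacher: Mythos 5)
Your first stage (converting the excess of admissible-but-not-good paths into many $\ell$-bad pairs in $N(v)\times\Gamma_{\ell-1}(v)$, using that each proper subpath is good so each endpoint pair carries at most $f(\ell-1,L)$ admissible paths, then cleaning up to meet the hypotheses of Lemma~\ref{findstructure}) matches the paper. One small but real omission there: you cannot get $|Y'|\geq \d^{\ell-1}/f(\ell-1,L)^2$ ``by averaging'' if $\Gamma_{\ell-1}(v)$ itself is smaller than this; the paper treats that case separately, observing that then the total number of admissible paths through $v$ in the second position is already below $2(K\d)^{\ell}/f(\ell-1,L)$, so there is nothing to prove.

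The genuine gap is in the assembly of $H=L_{s,t}(k)$. You apply Lemma~\ref{findstructure} with $i=\ell$ throughout and then claim to ``route the required internally disjoint paths to the leaves'' using $\ell$-badness. But an $\ell$-bad pair only supplies internally disjoint paths of length $\ell$, and the arms of $L_{s,t}(k)$ have length $k$; when $\ell<k$ (which is the typical case, since $\ell$ ranges over $2,\dots,k$), no single bad pair, nor a single $(\ell-1)$-subdivided star, produces an arm of length $k$. The paper's construction here is the crux of the proof and is not routine bookkeeping: one writes $k=j\ell+i$ with $1\leq i\leq\ell$, invokes Lemma~\ref{findstructure} with that value of $i$ to get $(i-1)$-subdivided $t$-stars $T_1,\dots,T_s$ handling the remainder at the far ends of the arms, reserves a large set $U\subseteq Y$ and a set $X_{\mathrm{final}}\subseteq X$ all of whose cross pairs are $\ell$-bad, and then builds each arm of length $k$ by concatenating $j$ internally disjoint paths of length $\ell$ whose endpoints alternate between $X_{\mathrm{final}}$ and $U$ (with a parity case distinction on $j$, the even case requiring one extra $t$-star to realise the blown-up edge $zw_0$). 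Your plan as written produces arms of length $\ell$ (or $\ell+i$), not $k$, so it does not yield a copy of $L_{s,t}(k)$ and the contradiction is not reached. Relatedly, in the case $\ell=k$ the $t$ copies of the vertex adjacent to the apex must be $t$ distinct vertices $x_1,\dots,x_t\in X_{\mathrm{final}}$, not a single $x^*$, in order to realise the $t$-blowup.
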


\begin{proof2}
	Suppose otherwise. Let $Y=\Gamma_{\ell-1}(v)$. Suppose first that $|Y|<\frac{\d^{\ell-1}}{f(\ell-1,L)^2}$. Note that if the path $v_0vv_2\dots v_\ell$ is admissible, then $vv_2\dots v_\ell$ is good, so the number of admissible paths of length $\ell-1$ from $v$ to $v_\ell$ is at most $f(\ell-1,L)$. Hence, the number of admissible paths $uvv_2\dots v_\ell$ is at most $f(\ell-1,L)$ for any fixed $u$ and $v_\ell$. But then the number of admissible paths of the form $v_0vv_2\dots v_\ell$ is at most $|N(v)||Y|f(\ell-1,L)<K\d \frac{\d^{\ell-1}}{f(\ell-1,L)^2}f(\ell-1,L)<\frac{2(K\d)^\ell}{f(\ell-1,L)}$, which contradicts our assumption.
	
	We may therefore assume that $|Y|\geq \frac{\d^{\ell-1}}{f(\ell-1,L)^2}$. For any $x\in N(v)$ and any $y\in Y$, the number of admissible paths of the form $xvv_2\dots v_{\ell-1}y$ is again at most $f(\ell-1,L)$. Moreover, by assumption, the number of pairs $(x,y)\in N(v)\times Y$ such that there is an admissible, but not good, path of the form $xvv_2\dots v_{\ell-1}y$ is at least $\frac{2(K\d)^\ell}{f(\ell-1,L)^2}\geq \frac{2|N(v)||Y|}{f(\ell-1,L)^2}$. Recall that any such pair $(x,y)$ is $\ell$-bad. Let $X=\{x\in N(v): \text{ there are at least } \frac{|Y|}{f(\ell-1,L)^2} \text{ choices of } y\in Y \text{ for which } (x,y) \text{ is } \ell \text{-bad}\}$. Then $|X|\geq \frac{|N(v)|}{f(\ell-1,L)^2}\geq \frac{\d}{f(\ell-1,L)^2}=\omega(1)$.
	
	Our aim now is to find a copy of $H$ in $G$, which will yield a contradiction. Consider first the case $\ell=k$. By Lemma \ref{findstructure} with $Z=\{v\}$, there exists a set $X'\s X$ of size $\omega(1)$ and a set $R_1\s Y$ of size $t$ such that $v\not \in R_1$ and $(x,y)$ is $\ell$-bad for any $x\in X'$ and $y\in R_1$. Note that this uses Lemma~\ref{findstructure} in a rather weak sense since we do not need the subdivided star provided by the lemma, only its leaves. Now applying Lemma \ref{findstructure} with $Z=R_1\cup \{v\}$ and with $X'$ in place of $X$, we find a set $X''\s X'$ of size $\omega(1)$ and a set $R_2\s Y$ of size~$t$, disjoint from $R_1\cup \{v\}$ such that $(x,y)$ is $\ell$-bad for any $x\in X''$ and $y\in R_2$. Continuing like this, with a total of $\lceil \frac{s}{t}\rceil$ applications of Lemma~\ref{findstructure}, we can find a set $X_{\mathrm{final}}\s X$ of size $\omega(1)$ and a set $U= R_1\cup R_2\cup \dots \cup R_{\lceil s/t\rceil}\s Y$ with $|U|\geq s$ such that $X_{\mathrm{final}}$ and $U$ are disjoint and do not contain $v$ and, moreover, $(x,y)$ is $\ell$-bad for any $x\in X_{\mathrm{final}}$ and $y\in U$.  Choose distinct vertices $x_1,\dots,x_t\in X_{\mathrm{final}}$ and $y_1,\dots,y_s \in U$. Since $(x_i, y_j)$ is $\ell$-bad for every $i, j$, if $L$ is sufficiently large, we can find pairwise internally vertex-disjoint paths of length $\ell=k$ joining $x_i$ to $y_j$ for every $i,j$ and we can insist that these paths do not contain $v$. The union of these paths forms a copy of $K_{s,t}^{k-1}$. Together with the vertex $v$ and the edges $vx_1,\dots,vx_t$, we get a copy of $H$.
	
	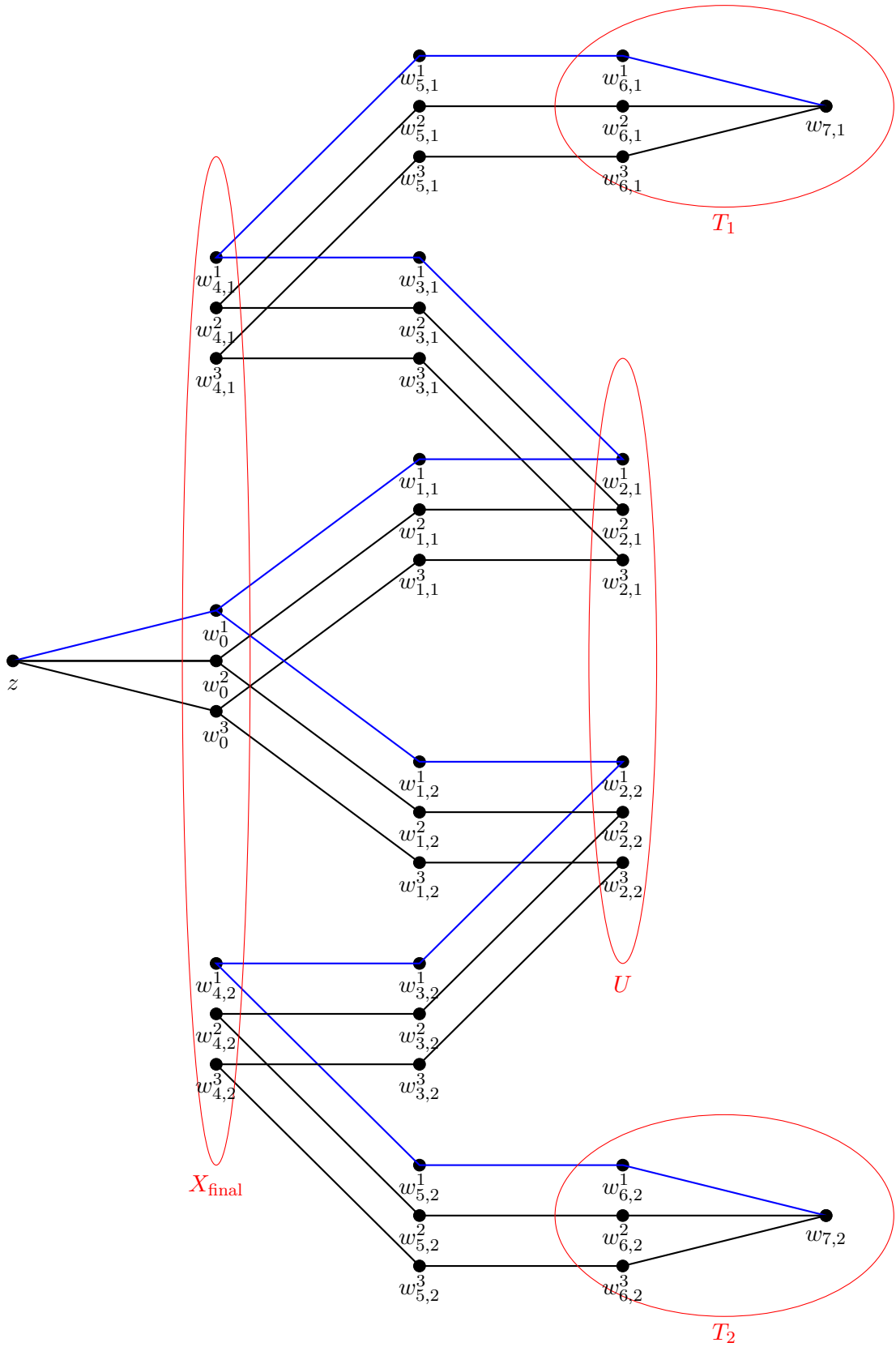
\begin{figure}
\centering
\begin{tikzpicture}[scale=0.55]
\draw[fill=black](0,0)circle(5pt);
\draw[fill=black](6,1.5)circle(5pt);
\draw[fill=black](6,9)circle(5pt);
\draw[fill=black](6,12)circle(5pt);
\draw[fill=black](12,3)circle(5pt);
\draw[fill=black](12,6)circle(5pt);
\draw[fill=black](12,9)circle(5pt);
\draw[fill=black](12,12)circle(5pt);
\draw[fill=black](12,15)circle(5pt);
\draw[fill=black](12,18)circle(5pt);
\draw[fill=black](18,3)circle(5pt);
\draw[fill=black](18,6)circle(5pt);
\draw[fill=black](18,15)circle(5pt);
\draw[fill=black](18,18)circle(5pt);
\draw[fill=black](24,16.5)circle(5pt);

\draw[fill=black](6,-1.5)circle(5pt);
\draw[fill=black](6,-9)circle(5pt);
\draw[fill=black](6,-12)circle(5pt);
\draw[fill=black](12,-3)circle(5pt);
\draw[fill=black](12,-6)circle(5pt);
\draw[fill=black](12,-9)circle(5pt);
\draw[fill=black](12,-12)circle(5pt);
\draw[fill=black](12,-15)circle(5pt);
\draw[fill=black](12,-18)circle(5pt);
\draw[fill=black](18,-3)circle(5pt);
\draw[fill=black](18,-6)circle(5pt);
\draw[fill=black](18,-15)circle(5pt);
\draw[fill=black](18,-18)circle(5pt);
\draw[fill=black](24,-16.5)circle(5pt);

\draw[fill=black](6,0)circle(5pt);

\draw[fill=black](6,10.5)circle(5pt);
\draw[fill=black](12,4.5)circle(5pt);
\draw[fill=black](12,10.5)circle(5pt);
\draw[fill=black](12,16.5)circle(5pt);
\draw[fill=black](18,4.5)circle(5pt);
\draw[fill=black](18,16.5)circle(5pt);
\draw[fill=black](24,16.5)circle(5pt);

\draw[fill=black](6,-10.5)circle(5pt);
\draw[fill=black](12,-4.5)circle(5pt);
\draw[fill=black](12,-10.5)circle(5pt);
\draw[fill=black](12,-16.5)circle(5pt);
\draw[fill=black](18,-4.5)circle(5pt);
\draw[fill=black](18,-16.5)circle(5pt);
\draw[fill=black](24,-16.5)circle(5pt);

\draw[thick](0,0)--(6,-1.5)--(12,3)--(18,3)--(12,9)--(6,9)--(12,15)--(18,15)--(24,16.5);
\draw[thick,blue](0,0)--(6,1.5)--(12,6)--(18,6)--(12,12)--(6,12)--(12,18)--(18,18)--(24,16.5);

\draw[thick,blue](6,1.5)--(12,-3)--(18,-3)--(12,-9)--(6,-9)--(12,-15)--(18,-15)--(24,-16.5);
\draw[thick](6,-1.5)--(12,-6)--(18,-6)--(12,-12)--(6,-12)--(12,-18)--(18,-18)--(24,-16.5);

\draw[thick](0,0)--(6,0)--(12,4.5)--(18,4.5)--(12,10.5)--(6,10.5)--(12,16.5)--(18,16.5)--(24,16.5);
\draw[thick](0,0)--(6,0)--(12,-4.5)--(18,-4.5)--(12,-10.5)--(6,-10.5)--(12,-16.5)--(18,-16.5)--(24,-16.5);

\draw[rotate around={90:(6,0)},red] (6,0) ellipse (15 and 1);
\draw[rotate around={90:(18,0)},red] (18,0) ellipse (9 and 1);
\draw[rotate around={90:(21,16.5)},red] (21,16.5) ellipse (3 and 5);
\draw[rotate around={90:(21,-16.5)},red] (21,-16.5) ellipse (3 and 5);

\node at (0,-0.7) {$z$};

\node at (6,-2.2) {$w_0^3$};
\node at (12,2.3) {$w_{1,1}^3$};
\node at (18,2.3) {$w_{2,1}^3$}; 
\node at (12,8.3) {$w_{3,1}^3$};
\node at (6,8.3) {$w_{4,1}^3$};
\node at (12,14.3) {$w_{5,1}^3$};
\node at (18,14.3) {$w_{6,1}^3$};

\node at (6,-0.7) {$w_0^2$};
\node at (12,3.8) {$w_{1,1}^2$};
\node at (18,3.8) {$w_{2,1}^2$}; 
\node at (12,9.8) {$w_{3,1}^2$};
\node at (6,9.8) {$w_{4,1}^2$};
\node at (12,15.8) {$w_{5,1}^2$};
\node at (18,15.8) {$w_{6,1}^2$};

\node at (6,0.8) {$w_0^1$};
\node at (12,5.3) {$w_{1,1}^1$};
\node at (18,5.3) {$w_{2,1}^1$}; 
\node at (12,11.3) {$w_{3,1}^1$};
\node at (6,11.3) {$w_{4,1}^1$};
\node at (12,17.3) {$w_{5,1}^1$};
\node at (18,17.3) {$w_{6,1}^1$};

\node at (12,-6.7) {$w_{1,2}^3$};
\node at (18,-6.7) {$w_{2,2}^3$}; 
\node at (12,-12.7) {$w_{3,2}^3$};
\node at (6,-12.7) {$w_{4,2}^3$};
\node at (12,-18.7) {$w_{5,2}^3$};
\node at (18,-18.7) {$w_{6,2}^3$};

\node at (12,-5.2) {$w_{1,2}^2$};
\node at (18,-5.2) {$w_{2,2}^2$}; 
\node at (12,-11.2) {$w_{3,2}^2$};
\node at (6,-11.2) {$w_{4,2}^2$};
\node at (12,-17.2) {$w_{5,2}^2$};
\node at (18,-17.2) {$w_{6,2}^2$};

\node at (12,-3.7) {$w_{1,2}^1$};
\node at (18,-3.7) {$w_{2,2}^1$}; 
\node at (12,-9.7) {$w_{3,2}^1$};
\node at (6,-9.7) {$w_{4,2}^1$};
\node at (12,-15.7) {$w_{5,2}^1$};
\node at (18,-15.7) {$w_{6,2}^1$};

\node at (24,15.8) {$w_{7,1}$};
\node at (24,-17.2) {$w_{7,2}$};

\node[red] at (6,-15.6) {$X_{\mathrm{final}}$};
\node[red] at (18,-9.6) {$U$};
\node[red] at (21,13) {$T_1$};
\node[red] at (21,-20) {$T_2$};

\end{tikzpicture}

\caption{The embedding of $H=L_{s,t}(k)$ in the case $s=2$, $t=3$, $k=7$, $\ell=2$. The graph with the blue edges is $L_{s,1}(k)$, which is blown up $t$ times to give $H$.}

\label{fig:embedH}

\end{figure}
	
	Now assume that $\ell<k$. Write $k=j\ell+i$ with $1\leq i\leq \ell$. Note that $i<k$. Assume first that $j$ is odd. As in the case $\ell=k$, by repeated application of Lemma~\ref{findstructure}, we can find a set $X_{\mathrm{final}}\s X$ of size $\omega(1)$, $(i-1)$-subdivided $t$-stars $T_1,\dots,T_s$ with leaf sets $Y_1,\dots,Y_s\s Y$ and a set $U\s Y$ with $|U|=|V(H)|$ such that the sets $X_{\mathrm{final}},V(T_1),\dots,V(T_s),U$ are pairwise disjoint and do not contain $v$ and, moreover, $(x,y)$ is $\ell$-bad for any $x\in X_{\mathrm{final}}$ and $y\in Y_1\cup \dots \cup Y_s\cup U$. 
	
	At this point, we recall the definition of $H$. It is the $t$-blowup of the rooted tree $L_{s,1}(k)$ with vertices $z,w_0,w_{1,1},\dots,w_{1,s},w_{2,1},\dots,w_{2,s},\dots,w_{k,1},\dots,w_{k,s}$, roots $z,w_{k,1},\dots,w_{k,s}$ and edges $zw_0$, $w_0w_{1,b}$ ($1\leq b\leq s$) and $w_{a,b}w_{a+1,b}$ ($1\leq a\leq k-1$, $1\leq b\leq s$). 
	Let us see how we can find $H$ in $G$. 
	The $(i-1)$-subdivided $t$-star $T_1$ will take the role of the blowup of the path $w_{k-i,1}w_{k-i+1,1}\dots w_{k,1}$. More generally, $T_b$ ($1\leq b\leq s$) will take the role of the blowup of the path $w_{k-i,b}w_{k-i+1,b}\dots w_{k,b}$. Also, $v$ will take the role of $z$. Furthermore, the roles of the blown-up copies of $w_{a\ell,b}$ for $a$ odd ($1\leq a < j, 1\leq b\leq s$) will be taken by vertices in $U$ in an arbitrary injective manner and the roles of the blown-up copies of $w_{0}$ and $w_{a\ell,b}$ for $a$ even ($2\leq a < j, 1\leq b\leq s$) will be taken by vertices in $X_{\mathrm{final}}$ in an arbitrary injective manner. It remains to define the vertices that correspond to the blown-up copies of $w_{c,b}$ with $1\leq c\leq j\ell-1, 1\leq b\leq s$ and $c$ not divisible by $\ell$. For a vertex $u$ in $L_{s,1}(k)$, let $u^p$ denote the $p$th blownup copy of $u$. The vertices $w_{a\ell,b}^p,w_{(a+1)\ell,b}^p$ ($1\leq p\leq t, 0\leq a\leq j-1, 1\leq b\leq s$, where $w_{0,b}=w_0$) are embedded in $G$ in a way that one is in $X_{\mathrm{final}}$ and the other is in $Y_1\cup\dots\cup Y_s\cup U$, so the pair $(w_{a\ell,b}^p,w_{(a+1)\ell,b}^p)$ is $\ell$-bad in the embedding. Therefore, we may join these pairs by paths of length $\ell$, all disjoint from each other and from the previous vertices, yielding a nondegenerate copy of $H$. See Figure~\ref{fig:embedH}, which illustrates the embedding in the case $s=2$, $t=3$, $k=7$, $\ell=2$.
	
	The case where $j$ is even is very similar. The only difference is that we also need a $t$-star with leaf set $Q\s Y$, which is disjoint from all other sets and such that $(x,q)$ is $\ell$-bad for all $x\in X_{\mathrm{final}},q\in Q$. The existence of such a set again follows from Lemma~\ref{findstructure}. Then the role of the blowup of the edge $zw_0$ is taken by this $t$-star and the blown-up copies of $w_{a\ell,b}$ are chosen from $X_{\mathrm{final}}$ for $1\leq a<j$ odd and from $U$ for $2\leq a<j$ even.
\end{proof2}

\begin{corollary} \label{admissiblenotgood}
	Let $G$ be an $H$-free $K$-almost-regular graph on $n$ vertices with minimum degree~$\d=\omega(1)$. Then, provided that $L$ is sufficiently large compared to $s$, $t$, $k$ and $K$, for any $2\leq \ell\leq k$, the number of admissible, but not good, paths of length $\ell$ is at most $n\frac{2(K\d)^\ell}{f(\ell-1,L)}$.
\end{corollary}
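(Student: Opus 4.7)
The plan is to derive this corollary as a direct consequence of the preceding lemma by summing over all possible choices of the second vertex of the path. I will write every admissible path of length $\ell$ as an ordered sequence $v_0 v_1 v_2 \ldots v_\ell$, so that each admissible-but-not-good path of length $\ell$ is uniquely identified by its second vertex $v_1 = v$ together with the remaining vertices of the sequence. The preceding lemma, applied to any fixed $v \in V(G)$, gives the bound $\frac{2(K\delta)^\ell}{f(\ell-1, L)}$ on the number of admissible-but-not-good paths of the form $v_0 v v_2 \ldots v_\ell$, i.e., those with second vertex equal to $v$.

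Summing this estimate over the $n$ choices of $v \in V(G)$ then immediately produces the claimed bound $n \cdot \frac{2(K\delta)^\ell}{f(\ell-1, L)}$. The only thing to verify is that no path is overcounted, which is clear since the second vertex of a sequence $v_0 v_1 \ldots v_\ell$ is uniquely determined.

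There is no real obstacle here: the corollary is essentially a restatement of the preceding lemma with the dependence on the basepoint $v$ integrated out. As a sanity check, substituting $\ell = k$ yields Lemma \ref{fewnotgood} with constant $c_L = 2K^k/f(k-1, L)$, and the recursive definition of $f(\ell, L)$ (in which $f(\ell, L)$ grows at least like a power of $f(\ell-1, L)$, starting from $f(1, L) = L$) makes it clear that $f(k-1, L) \to \infty$ as $L \to \infty$, so that $c_L \to 0$, as required for the application in the proof of Lemma \ref{goodl}.
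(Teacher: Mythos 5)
Your argument is correct and is exactly what the paper intends: the corollary is stated without proof precisely because it follows from the preceding lemma by summing over the $n$ choices of the second vertex $v$, with no overcounting once paths are treated as ordered sequences. One small caveat about your sanity check: setting $\ell=k$ does \emph{not} directly recover Lemma~\ref{fewnotgood}, since that lemma bounds \emph{all} not-good paths of length $k$ (including non-admissible ones); the paper's proof of Lemma~\ref{fewnotgood} still needs to pass to a minimal not-good subpath (which is admissible), apply the corollary for that length $j-i$, and then multiply by the number of ways to extend to a full path of length $k$.
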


It is now easy to deduce Lemma \ref{fewnotgood}.

\begin{proof}[\textnormal{\textbf{Proof of Lemma \ref{fewnotgood}}}]
	Suppose that the path $u_0u_1\dots u_k$ is not good. Take $0\leq i<j\leq k$ with $j-i$ minimal such that $u_iu_{i+1}\dots u_j$ is not good. Then $u_i\dots u_j$ is admissible. For any fixed $i,j$, by Corollary \ref{admissiblenotgood}, the number of such paths is at most $n\frac{2(K\d)^{j-i}}{f(j-i-1,L)}\cdot 2(K\d)^{k-(j-i)}=\frac{4K^k}{f(j-i-1,L)}n\d^k\leq \frac{4K^k}{L}n\d^k$. Using that $i$ and $j$ can take at most $k+1$ values each, it follows that the number of not good paths of length $k$ is at most $(k+1)^2\frac{4K^k}{L}n\d^k\leq c_Ln\d^k$, where $c_L\rightarrow 0$ as $L\rightarrow \infty$.
\end{proof}

Given Theorem \ref{longermain}, it is not hard to deduce Corollary \ref{longersubcor}. Recall that $L_{s,t}(k)$ is the rooted $t$-blowup of $L_{s,1}(k)$ with the roots defined as before. This rooted graph is balanced and bipartite with $\rho(L_{s,1}(k))=\frac{sk+1}{s(k-1)+1}$, so Lemma~\ref{lemmaBC} gives that $\ex(n,L_{s,t}(k)) = \Omega(n^{1 + \frac{s}{sk+1}})$ 
when $t$ is sufficiently large in terms of $s$ and $k$. Combining this with Theorem \ref{longermain}, Corollary \ref{longersubcor} follows.

For Proposition \ref{proplower}, note that $K_{s,t}^{k-1}$ is the rooted $t$-blowup of $K_{s,1}^{k-1}$ with the roots being the leaves of $K_{s,1}^{k-1}$. This rooted graph is balanced and bipartite with $\rho(K_{s,1}^{k-1})=\frac{sk}{s(k-1)+1}$. Thus, Lemma \ref{lemmaBC} implies that $\ex(n,K_{s,t}^{k-1}) = \Omega(n^{1 + \frac{s-1}{sk}})$ when $t$ is sufficiently large in terms of $s$ and $k$, as required.

\section{Concluding remarks}

{\bf More realisable numbers.} Following Kang, Kim and Liu~\cite{KKL18}, we say that a number $r \in (1,2)$ is \emph{balancedly realisable by a graph $F$} if there is a balanced connected rooted graph $F$ and a positive integer $\ell_0$ such that $\rho(F) = \frac{1}{2-r}$ and, for every $\ell \geq \ell_0$, the rooted $\ell$-blowup of $F$ has extremal number $\Theta(n^r)$. In their paper, Kang, Kim and Liu applied an old result of Erd\H{o}s and Simonovits~\cite{ES70} to prove the following useful lemma.

\begin{lemma}[Kang--Kim--Liu]
If $a$ and $b$ are positive integers with $b > a$ and $2 - \frac{a}{b}$ is balancedly realisable, then $2 - \frac{a}{a+b}$ is also balancedly realisable.
\end{lemma}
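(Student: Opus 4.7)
The strategy is to construct a rooted graph $F^*$ witnessing the realisability of $2 - \frac{a}{a+b}$ starting from a rooted graph $F$ witnessing the realisability of $2 - \frac{a}{b}$. The construction is a pendant-root adjunction designed to enforce $\rho(F^*) = 1 + \rho(F)$, and the required bounds on $\ex(n, \ell \ast F^*)$ are then deduced from the hypothesis on $F$ together with Lemma~\ref{lemmaBC}.

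\textbf{Construction and density computation.} Let $F$ be a balanced bipartite connected rooted graph with bipartition $X \cup Y$, root set $R \subsetneq V(F)$, and $\rho(F) = b/a$. Form $F^*$ by adding, for each non-root vertex $v \in V(F) \setminus R$, a new root $\tilde v$ joined only to $v$ and placed in the part of the bipartition opposite to $v$. Then $F^*$ is bipartite and connected, and $V(F^*) \setminus R^* = V(F) \setminus R$. Writing $v := |V(F) \setminus R|$ and using the paper's notation $e_S$ for the number of edges of a graph adjacent to $S$, we have $e_{V(F^*) \setminus R^*}$ in $F^*$ equal to $e_{V(F) \setminus R}$ in $F$ plus the $v$ new pendant edges, so
\begin{equation*}
\rho(F^*) = \frac{e_{V(F) \setminus R} + v}{v} = \rho(F) + 1 = \frac{a+b}{a}.
\end{equation*}
For any non-empty $S \subseteq V(F) \setminus R$, the edges of $F^*$ adjacent to $S$ are those of $F$ adjacent to $S$ together with the $|S|$ new pendants, so $\rho_{F^*}(S) = \rho_F(S) + 1 \geq \rho(F) + 1 = \rho(F^*)$ by the balance of $F$. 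Hence $F^*$ is balanced, and with $r^* = 2 - a/(a+b)$ we have $\rho(F^*) = 1/(2-r^*)$, as required.

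\textbf{Bounds on $\ex(n, \ell \ast F^*)$.} Since $F^*$ is balanced bipartite with positive $\rho$, Lemma~\ref{lemmaBC} immediately yields the lower bound $\ex(n, \ell \ast F^*) = \Omega(n^{r^*})$ for all sufficiently large $\ell$. For the matching upper bound, suppose $G$ has $n$ vertices and at least $Cn^{r^*}$ edges for a large constant $C$; applying Lemma~\ref{lemmaJS} we may pass to a $K$-almost regular subgraph $G'$ on $m$ vertices with minimum degree $\delta = \Omega(m^{r^*-1})$. The key structural observation is that $\ell \ast F^*$ is obtained from $\ell \ast F$ by adjoining, for each non-root orbit, a single shared pendant root. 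Since $r^* > r = 2 - a/b$, the hypothesis on $F$ guarantees that $G'$ contains many copies of $\ell' \ast F$ for any fixed $\ell'$. The plan is to locate a copy of $\ell' \ast F$ with $\ell'$ large and then, for each of the $v$ non-root orbits of size $\ell'$, apply a K\H{o}v\'{a}ri--S\'{o}s--Tur\'{a}n style averaging to identify a vertex of $G'$ adjacent to at least $\ell$ of the orbit's vertices; this vertex serves as the pendant root, and iterating through the orbits produces $\ell \ast F^*$.

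\textbf{Main obstacle.} The principal difficulty lies in the upper bound: each step of the K\H{o}v\'{a}ri--S\'{o}s--Tur\'{a}n reduction loses a factor of roughly $m/\delta = n^{\Theta(1)}$, so we need to start from a copy of $\ell' \ast F$ in $G'$ with $\ell' \geq \ell \cdot (m/\delta)^{v}$ growing polynomially in $n$. Producing such a polynomially-large blow-up requires the supersaturation strengthening of the bare hypothesis $\ex(n, \ell \ast F) = O(n^r)$, which is the role played by the Erd\H{o}s--Simonovits counting argument cited as the input to Kang--Kim--Liu's proof; carefully controlling the interaction between the many copies of $F$ in $G'$ and the common-neighbour structure used to place the pendants is the technical crux of the argument.
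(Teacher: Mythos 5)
The paper does not actually prove this lemma: it is quoted from Kang--Kim--Liu~\cite{KKL18}, who derive it from an older reduction of Erd\H{o}s and Simonovits~\cite{ES70}, so there is no in-paper argument to compare against. Judged on its own terms, your proposal is not a complete proof. The construction (attach a new pendant root to every non-root vertex of $F$), the bookkeeping $\rho_{F^*}(S)=\rho_F(S)+1$ giving balancedness and $\rho(F^*)=(a+b)/a$, and the lower bound via Lemma~\ref{lemmaBC} are all fine (for the last point one should note that $F$ is necessarily bipartite, since a non-bipartite $\ell\ast F$ would have extremal number $\Theta(n^2)$). But the entire substance of the lemma is the upper bound $\ex(n,\ell\ast F^*)=O(n^{2-\frac{a}{a+b}})$, and for that you offer only a plan whose central step you yourself label ``the technical crux'' and do not carry out.

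Moreover, the plan as described cannot be repaired. You propose to first embed $\ell'\ast F$ with $\ell'\geq \ell\,(m/\delta)^{v}$, where $v=|V(F)\setminus R|$, and then attach the pendant roots by iterated averaging. Since $\delta\approx m^{\rho/(\rho+1)}$ with $\rho=b/a$, this forces $\ell'\gtrsim \ell\, m^{v/(\rho+1)}$, a positive power of $m$. Two things then go wrong. First, the hypothesis only asserts $\ex(n,\ell'\ast F)=\Theta(n^{2-a/b})$ for each \emph{fixed} $\ell'\geq\ell_0$, with implied constants depending on $\ell'$; it gives no information once $\ell'$ grows with $m$, and no supersaturation statement is available from the definition of balanced realisability alone. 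Second, and fatally, in any connected rooted graph with $\emptyset\neq R\subsetneq V(F)$ some root is adjacent to a non-root, so $\ell'\ast F$ contains a vertex of degree at least $\ell'$; since $\Delta(G')\leq K\delta\approx m^{\rho/(\rho+1)}$, a copy of $\ell'\ast F$ with $\ell'\approx m^{v/(\rho+1)}$ can exist only if $v\leq\rho$, which fails for essentially every $F$ of interest here (for instance $K'_{s,1}$ has $v=s+1$ but $\rho=\tfrac{2s}{s+1}<2$). The actual argument cannot decouple ``find a huge blow-up of $F$'' from ``attach the pendants''; the Erd\H{o}s--Simonovits reduction interleaves the two, and supplying that interleaving is precisely what is missing from your write-up.
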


\noindent
Repeated applications of this lemma starting from the result~\cite{C18, FS83} that $1 + \frac{1}{a+1} = 2 - \frac{a}{a+1}$ is balancedly realisable for all $a$ then allowed them to show that $2 - \frac{a}{b}$ is balancedly realisable for all $b \equiv 1 \;(\bmod\; a)$. Applying the same reasoning starting from our Corollary~\ref{longersubcor} easily allows us to derive the following result.

\begin{corollary} \label{evenmoreexps}
For any integers $s, k, p \geq 1$, the exponent $2 - \frac{s k + 1}{p(sk+1) + s}$ is balancedly realisable. In particular, taking the limit as $s \rightarrow \infty$ implies that, for any positive integers $b > a$ with $b \equiv 1 \;(\bmod\; a)$, the exponent $2 - \frac{a}{b}$ is a limit point of the set of realisable numbers. 
\end{corollary}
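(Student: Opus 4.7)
The plan is to iterate the Kang--Kim--Liu lemma, using Corollary~\ref{longersubcor} as the base case. First I would reindex: Corollary~\ref{longersubcor} applied with parameters $s, k+1$ in place of $s, k$ tells us that the exponent
\[
1 + \frac{s}{s(k+1)+1} \;=\; \frac{s(k+2)+1}{s(k+1)+1} \;=\; 2 - \frac{sk+1}{s(k+1)+1}
\]
is balancedly realisable. This is exactly of the form $2 - \frac{a}{b}$ with $a = sk+1$ and $b = s(k+1)+1 = a + s$, and the required inequality $b > a$ holds because $s \geq 1$.

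Next I would apply the Kang--Kim--Liu lemma $p-1$ times, holding $a = sk+1$ fixed; each application replaces $b$ by $a+b$, so after $p-1$ iterations the second parameter becomes $(a+s) + (p-1)a = pa + s = p(sk+1)+s$. This yields that
\[
2 - \frac{sk+1}{p(sk+1)+s}
\]
is balancedly realisable for every $s, k, p \geq 1$, which is the first claim.

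For the ``in particular'' statement, fix positive integers $b > a$ with $b \equiv 1 \pmod{a}$, and write $b = pa + 1$ for some $p \geq 1$. Taking $k = a$ in the realisable exponents above and letting $s \to \infty$, I would observe that
\[
\frac{sk+1}{p(sk+1)+s} \;=\; \frac{k + 1/s}{p(k+1/s) + 1} \;\longrightarrow\; \frac{k}{pk+1} \;=\; \frac{a}{b},
\]
while a direct cross-multiplication gives
\[
\frac{sk+1}{p(sk+1)+s} - \frac{a}{b} \;=\; \frac{1}{\bigl(p(sk+1)+s\bigr)(pk+1)} \;>\; 0,
\]
so each approximating exponent is strictly distinct from (and less than) $2 - \frac{a}{b}$. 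Since the approximating exponents are all realisable, $2 - \frac{a}{b}$ is a limit point of the set of realisable numbers.

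There is essentially no substantive obstacle beyond careful bookkeeping: the proof is an arithmetic unpacking of the KKL iteration, with Corollary~\ref{longersubcor} supplying the non-trivial input. The only mildly subtle point is the reindexing $k \mapsto k+1$, which aligns the exponent coming from Corollary~\ref{longersubcor} with the shape $2 - \frac{a}{a+s}$ needed to initiate the KKL iteration; getting this alignment wrong would produce a slightly different family of exponents.
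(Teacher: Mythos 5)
Your proposal is correct and follows exactly the route the paper intends: take Corollary~\ref{longersubcor} with $k+1$ in place of $k$ as the base case (noting $1+\frac{s}{s(k+1)+1}=2-\frac{sk+1}{(sk+1)+s}$, which is balancedly realisable since $L_{s,1}(k+1)$ is a balanced connected rooted graph with the right density), then iterate the Kang--Kim--Liu lemma $p-1$ times with $a=sk+1$ fixed. Your explicit cross-multiplication confirming that the approximating exponents are distinct from $2-\frac{a}{b}$ is a nice touch of bookkeeping that the paper leaves implicit.
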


\vspace{3mm}
\noindent
{\bf Subdivisions of complete bipartite graphs.}
Several interesting questions remain about subdivisions of complete bipartite graphs. One that immediately arises from Theorem~\ref{subbipnew} and Corollary~\ref{subbipnewcor} is the following.

\begin{problem} \label{smallest}
For any integer $s \geq 2$, estimate the smallest $t$ such that $\ex(n, K'_{s,t}) = \Omega(n^{3/2 - \frac{1}{2s}})$.
\end{problem}
For the analogous question with $K_{s,t}$ instead of $K'_{s,t}$, it has been conjectured~\cite{KST54} that $\ex(n, K_{s,t}) = \Omega(n^{2 - \frac{1}{s}})$ for all $t \geq s$, though this is only known for $s = 2$ or $3$ (see, for instance,~\cite{FS13}). The $s = 2$ case of Problem~\ref{smallest} amounts to estimating the extremal number of the theta graph $\theta_{4,t}$. Here it is known~\cite{VW19} that $\ex(n, \theta_{4,3}) = \Omega(n^{1 + 1/4})$. Deriving a similar bound for $\ex(n, \theta_{4,2})$ is likely to be difficult, as it would solve the famous open problem of estimating $\ex(n, C_8)$. However, the next case, when $s = 3$, now seems an attractive candidate for further exploration. 

Another pressing question is to improve the bound for $\ex(n, K_{s,t}^{k-1})$ given in Theorem~\ref{bipartitesubdivisions} so that it meets the lower bound given in Proposition~\ref{proplower}. This seems to be a good test case for developing methods that could help to resolve the full rational exponents conjecture.

\begin{conjecture}
For any integers $s, t, k \geq 1$, $\ex(n, K_{s,t}^{k-1}) = O(n^{1 + \frac{s-1}{sk}})$.
\end{conjecture}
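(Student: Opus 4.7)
The plan is to adapt the good-path framework of Section~\ref{sectionlongersub} to the spider $K_{s,1}^{k-1}$ itself rather than to its hub-augmented extension $L_{s,1}(k)$, thereby saving the factor coming from the hub vertex and sharpening the degree threshold to the conjectured $\delta=\omega(n^{(s-1)/(sk)})$. After applying Lemma~\ref{lemmaJS} we may assume that the host graph $G$ is $K$-almost-regular on $n$ vertices with this minimum degree. Fixing a sufficiently large constant $L$ and retaining the Section~\ref{sectionlongersub} definition of good path, we observe that $K_{s,1}^{k-1}$ is a tree on $sk+1$ vertices, so $G$ admits at least $(1-o(1))n\delta^{sk}$ embedded copies; an analogue of Lemma~\ref{goodl} then shows that at least $\tfrac{1}{2}n\delta^{sk}$ of them are \emph{fully good}, in the sense that every leg $vw_{1,j}\cdots w_{k,j}$ is a good path.

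Fixing the $s$ terminal roots $w_{k,1},\dots,w_{k,s}$ and averaging yields a configuration admitting $\Omega(\delta^{sk}/n^{s-1})=\omega(1)$ fully good extensions. If at least $t$ of these are pairwise internally vertex-disjoint outside the roots then we have found a copy of $K_{s,t}^{k-1}$; otherwise a pigeonhole forces some non-root vertex to be reused $\omega(1)$ times. The case of a reused centre $v$ is handled exactly as in the proof of Theorem~\ref{longersubdivisions}: the $s$ good legs jointly admit only $f(k,L)^s=O(1)$ realisations to the fixed roots, which contradicts $\omega(1)$.

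The main obstacle is the remaining case, in which the reused vertex is an interior vertex $w_{i,j}$ with $1\leq i\leq k-1$. In the proof of Theorem~\ref{longersubdivisions}, the hub $u$ bounded the number of candidate centres $v$ through the good path $uvw_{1,j}\cdots w_{i,j}$; without it the centre may take up to $O(\delta^i)$ values. The plan is to treat the $\omega(1)$ spiders through a fixed $w_{i,j}$ as a supersaturated family of \emph{asymmetric rooted spiders} --- $s-1$ legs of length $k$ together with one shortened leg of length $i$, all meeting at the centre $v$, rooted at $\{w_{k,j'}:j'\neq j\}\cup\{w_{i,j}\}$ --- and to iterate the fixing-and-pigeonholing argument inside this subfamily. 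Each iteration either contradicts via a reused centre or further shortens the shape, a process that must terminate in at most $s(k-1)$ steps since the number of non-root vertices strictly decreases. Two quantitative issues make the iteration delicate: the asymmetric spider has strictly larger Bukh--Conlon density $\rho$ than $K_{s,1}^{k-1}$ (for instance $\rho=5/3>3/2$ when $s=3$, $k=2$, $i=1$), so the proof must absorb the $\omega(1)$ slack gained at each pigeonhole step into the degree budget; and when the iteration terminates with $t$ pairwise disjoint asymmetric spiders, extending each shortened leg back to a full length-$k$ leg by gluing on a good $(k-i)$-path from $w_{i,j}$ to $w_{k,j}$ requires that the $t$ completions can be chosen to be internally disjoint. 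Achieving this in a way that preserves the critical exponent $1+(s-1)/(sk)$ --- and quite possibly introducing a new structural idea beyond the good-path method --- is where the heart of the difficulty lies.
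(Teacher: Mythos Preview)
This statement is presented in the paper as an open conjecture, not a theorem; the paper offers no proof for it, describing it instead as ``a good test case for developing methods that could help to resolve the full rational exponents conjecture.'' So there is no paper proof to compare against, and the relevant question is whether your proposal actually establishes the bound.

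It does not, and to your credit you essentially say so yourself. The first two phases of your plan are sound: since $K_{s,t}^{k-1}\subset L_{s,t}(k)$, a $K_{s,t}^{k-1}$-free graph is also $L_{s,t}(k)$-free, so Lemma~\ref{fewnotgood} and the analogue of Lemma~\ref{goodl} genuinely go through, and the averaging over the $s$ leaf-roots correctly produces $\Omega(\delta^{sk}/n^{s-1})=\omega(1)$ fully good extensions at the conjectured threshold. The reused-centre case is also fine. The genuine gap is exactly the reused-interior-vertex case. In the paper's proof of Theorem~\ref{longersubdivisions} the hub $u$ is not decorative: once $u$ and $w_{i,j}$ are both fixed roots, the good path $u\,v\,w_{1,j}\cdots w_{i,j}$ of length $i+1$ pins the centre $v$ down to $f(i+1,L)=O(1)$ possibilities. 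Without $u$, goodness of $v\,w_{1,j}\cdots w_{i,j}$ bounds only the number of paths \emph{per centre}, not the number of centres, so $v$ can genuinely range over $\Theta(\delta^{i})$ vertices, and the $\omega(1)$ overcount is far too weak to force a contradiction.

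Your proposed iteration does not repair this. As you compute, the asymmetric spider obtained by shortening one leg has strictly larger density $\rho$ than $K_{s,1}^{k-1}$, so at $\delta=\omega(n^{(s-1)/(sk)})$ the averaging step for the shortened shape no longer yields $\omega(1)$ extensions; the slack you carry forward from the previous pigeonhole is only an unspecified $\omega(1)$, which cannot compensate for a polynomial shortfall in $n$. Moreover, the final gluing step---extending $t$ disjoint short legs back to length $k$ via internally disjoint good paths to the common fixed root $w_{k,j}$---requires $t$ disjoint good paths of length $k-i$ between a \emph{single} pair of fixed endpoints, and goodness gives at most $f(k-i,L)$ such paths in total, not $t$ disjoint ones on demand. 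Your closing sentence is accurate: a new structural idea beyond the good-path bookkeeping appears to be needed, and this is precisely why the paper leaves the statement as a conjecture.
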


\vspace{3mm}
\noindent
{\bf Hypergraph subdivisions.}
Given a hypergraph $\mathcal{H}$, its subdivision $\mathcal{H}'$ is defined to be the bipartite graph between $V(\mathcal{H})$ and $E(\mathcal{H})$ where we join $v \in V(\mathcal{H})$ and $e \in E(\mathcal{H})$ if and only if $v \in e$. In this language, Theorem~\ref{maxdegr} may be rephrased as saying that the subdivision $\L'$ of an $r$-uniform linear hypergraph $\L$ satisfies $\ex(n, \L') = o(n^{2 - 1/r})$. This is a special case of the following conjecture, itself a rather weak variant of Conjecture~\ref{conjecturefaks}.

\begin{conjecture}
For any $r$-uniform hypergraph $\mathcal{H}$, $\ex(n, \mathcal{H}') = o(n^{2 - 1/r})$.
\end{conjecture}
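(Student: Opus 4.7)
The plan is to mirror the proof of Theorem \ref{maxdegr}, which handles the linear case, and to extend it to general $r$-uniform hypergraphs $\mathcal{H}$. First, by Lemma \ref{lemmaJSmodified}, reduce to the situation where $G$ is a $K$-almost-regular balanced bipartite graph with bipartition $A\cup B$, $|B|=n$, and minimum degree $\delta\geq cn^{1-1/r}$ for a suitable constant $c>0$. As in Section \ref{sectionmaxdegr}, consider the weighted neighbourhood $r$-graph $W_G$ on $A$ with weights $W(u_1,\ldots,u_r)=d_G(u_1,\ldots,u_r)$, and declare an $r$-edge \emph{light} if its weight lies in $[1,C)$ for a constant $C$ depending on $|V(\mathcal{H}')|$. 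Following the argument of Lemma \ref{manylightr}, the $\mathcal{H}'$-freeness of $G$ should force most of the weight in $W_G$ to sit on light edges, so the simple $r$-uniform hypergraph $\mathcal{G}$ of light edges on $A$ is $(\rho,d)$-dense for some constants $\rho,d>0$.

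The rest of the proof would produce $\Omega(|A|^{|V(\mathcal{H})|})$ copies of $\mathcal{H}$ inside $\mathcal{G}$. Each such copy yields a homomorphism of $\mathcal{H}'$ into $G$ in the usual way: the vertices of $V(\mathcal{H})$ are embedded in $A$ via the copy of $\mathcal{H}$, and for each hyperedge $e\in E(\mathcal{H})$ we choose some $B$-vertex in the common neighbourhood of the images of $e$. An argument analogous to the one at the end of the proof of Theorem \ref{maxdegr}, exploiting the bounded weights of light edges together with $\Delta(G)\leq K\delta$, then shows that only a lower-order fraction of these homomorphisms are degenerate, so a genuine copy of $\mathcal{H}'$ survives.

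The chief obstacle is the counting step. Theorem \ref{embed} supplies the desired $\Omega(n^{|V(\mathcal{H})|})$ copies of $\mathcal{H}$ in a $(\rho,d)$-dense $r$-graph only when $\mathcal{H}$ is \emph{linear}, and this restriction is essential; $(\rho,d)$-density alone is known to be insufficient for embedding non-linear hypergraphs. To get past this, one has to exploit extra structure in $\mathcal{G}$ coming from the almost-regularity of $G$. Concretely, I would try to show that for every $j$ with $2\leq j\leq r-1$, and for all but a negligible fraction of $j$-tuples $\{u_1,\ldots,u_j\}\subset A$, the link $\{(w_1,\ldots,w_{r-j}):\{u_1,\ldots,u_j,w_1,\ldots,w_{r-j}\}\in E(\mathcal{G})\}$ has size $\Omega(n^{r-j})$. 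Since $\delta\sim n^{1-1/r}$ forces the average weight of an $r$-set to be $\Theta(1)$ and each pair $\{u,v\}$ typically has $\Theta(\delta^2/n)=\Theta(n^{1-2/r})$ common neighbours in $B$, the expected link sizes have the right order; the task is to show that they are not badly concentrated. Given such link-density, any fixed $\mathcal{H}$ can then be embedded in $\mathcal{G}$ greedily vertex by vertex, with shared intersections between hyperedges of $\mathcal{H}$ handled by reusing the same $A$-vertex at each overlap and picking fresh $B$-vertices from the nonempty common neighbourhoods. Proving the link-density of $\mathcal{G}$ while preserving lightness is, I expect, the main technical hurdle.
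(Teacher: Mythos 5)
This statement is not a theorem of the paper but an open conjecture: the authors state it in the concluding remarks precisely because they cannot prove it, noting that it is known only when $\mathcal{H}$ is $K_{r+1}^{(r)}$, when $\mathcal{H}$ is $r$-partite, and (via Theorem~\ref{maxdegr}) when $\mathcal{H}$ is linear, and that ``in full generality, the conjecture seems to lie well beyond our current methods.'' Your proposal is therefore not being measured against a proof in the paper, and as written it is not a proof: it is a faithful replay of the linear-case argument of Section~\ref{sectionmaxdegr} together with an honest admission that the one step which makes that argument work --- the counting of copies of $\mathcal{H}$ in the light-edge hypergraph $\mathcal{G}$ via Theorem~\ref{embed} --- is unavailable when $\mathcal{H}$ is not linear.

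The gap is not merely technical. $(\rho,d)$-denseness is a weak, vertex-level quasirandomness notion, and it is genuinely insufficient for embedding non-linear hypergraphs (for instance, the $3$-uniform hypergraph of cyclic triangles of a random tournament is $(\rho,d)$-dense with $d$ near $1/4$ yet contains no $K_4^{(3)}$). Your proposed remedy --- establishing that typical $j$-tuples in $A$ have links of size $\Omega(n^{r-j})$ in $\mathcal{G}$ --- amounts to upgrading $\mathcal{G}$ to a much stronger quasirandomness class, and nothing in the hypotheses delivers this: $K$-almost-regularity of $G$ controls degrees but says nothing about codegrees, so the codegree $d_G(u,v)$ can be wildly non-uniform (as it is in the known extremal constructions), and the averaging heuristic you invoke gives no concentration. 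Any argument that controls the link structure would have to extract new information from $\mathcal{H}'$-freeness itself, which is exactly the content of the open problem. Separately, even granting the embedding step, your treatment of degenerate copies would need care, since for non-linear $\mathcal{H}$ two hyperedges sharing $j\geq 2$ vertices must be sent to \emph{distinct} vertices of $B$ lying in the common neighbourhood of an overlapping pair of $r$-sets, a constraint that does not arise in the linear case; but this is secondary to the main obstruction.
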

At present, we know this conjecture when $\mathcal{H} = K_{r+1}^{(r)}$, when $\mathcal{H}$ is $r$-partite~\cite{CL18} and, now, when $\mathcal{H}$ is linear. The methods of Section~\ref{sectionmaxdegr} also apply to some other hypergraphs for which the analogue of Theorem~\ref{embed} holds.
However, in full generality, the conjecture seems to lie well beyond our current methods, so any further progress would be extremely welcome.

\vspace{3mm}
\noindent
{\bf Acknowledgements.} We would like to thank the anonymous referees for their careful reviews.

\bibliographystyle{abbrv}
\bibliography{references}




\end{document}